 \theoremstyle{definition}
 \newtheorem*{defn*}{\protect\definitionname}
\theoremstyle{plain}
\newtheorem{thm}{\protect\theoremname}[section]
  \theoremstyle{remark}
  \newtheorem{rem}[thm]{\protect\remarkname}
  \theoremstyle{plain}
  \newtheorem{lem}[thm]{\protect\lemmaname}
  \theoremstyle{plain}
  \newtheorem{prop}[thm]{\protect\propositionname}
\newenvironment{keywords}{ \noindent\footnotesize\textbf{Keywords and phrases:}}{}
\newenvironment{class}{\noindent\footnotesize\textbf{Mathematics subject classification 2010:}}{}
\theoremstyle{definition}
\newtheorem*{hyp}{Hypotheses}
\newcommand*{\trace}{\operatorname{trace}}
\newcommand*{\dive}{\operatorname{div}}
\newcommand*{\Grad}{\operatorname{Grad}}
\newcommand*{\Div}{\operatorname{Div}}
\newcommand*{\grad}{\operatorname{grad}}
\newcommand*{\supp}{\operatorname{supp}}
\renewcommand*{\i}{\mathrm{i}}
\newcommand{\Bullet}[1]{\begin{array}[b]{c} \bullet \vspace{-1mm}\\
\rule{3pt}{0pt}\hspace{-1.5mm} #1  \end{array}}
\renewcommand{\Re}{\operatorname{\mathfrak{Re}}}
\renewcommand{\tilde}{\widetilde}
\renewcommand*{\epsilon}{\varepsilon}
\renewcommand*{\rho}{\varrho}
\author{Sascha Trostorff}
  \providecommand{\definitionname}{Definition}
  \providecommand{\lemmaname}{Lemma}
  \providecommand{\propositionname}{Proposition}
  \providecommand{\remarkname}{Remark}
\providecommand{\theoremname}{Theorem}
\begin{document}
\makepreprinttitlepage

\author{ Sascha Trostorff \\ Institut f\"ur Analysis, Fachrichtung Mathematik\\ Technische Universit\"at Dresden\\ Germany\\ sascha.trostorff@tu-dresden.de}

\title{Autonomous Evolutionary Inclusions with Applications to Problems
with Nonlinear Boundary Conditions.}

\maketitle
\begin{abstract} \textbf{Abstract.} We study an abstract class of
autonomous differential inclusions in Hilbert spaces and show the
well-posedness and causality, by establishing the operators involved
as maximal monotone operators in time and space. Then the proof of
the well-posedness relies on a well-known perturbation result for
maximal monotone operators. Moreover, we show that certain types of
nonlinear boundary value problems are covered by this class of inclusions
and we derive necessary conditions on the operators on the boundary
in order to apply the solution theory. We exemplify our findings by
two examples. \end{abstract}

\begin{keywords} Evolutionary inclusions, well-posedness, causality,
maximal monotonicity, impedance type boundary conditions, frictional
boundary conditions. \end{keywords}

\begin{class} 34G25, 35F30, 35R20, 46N20, 47J35 \end{class}

\newpage

\tableofcontents{} 

\newpage

\section{Introduction}

In this article we study differential inclusions of the form 
\begin{equation}
(u,f)\in\partial_{0}M(\partial_{0}^{-1})+A,\label{eq:problem}
\end{equation}
where $\partial_{0}$ denotes the derivative with respect to time
and $A$ is assumed to be a maximal monotone relation in time and
space. The bounded linear operator $M(\partial_{0}^{-1})$, also acting
in time and space, is a so-called linear material law, as it was introduced
in \cite{Picard}. Since we are dealing with the autonomous case,
the operator $M(\partial_{0}^{-1})$ and the relation $A$ are assumed
to commute with the temporal translation operator. We will show that
under suitable conditions on $M(\partial_{0}^{-1})$ and $A$ the
problem \prettyref{eq:problem} is well-posed, i.e. Hadamard's requirements
on existence and uniqueness of a solution $u$ and its continuous
dependence on the given data $f$ are satisfied. Moreover the issue
of causality is addressed, meaning that the behaviour of the solution
up to a certain time $T$ just depends on the behaviour of the given
right hand side up to the same time $T$.\\
Originally this class of problems was discussed in \cite{Picard},
where the relation $A$ was given by a skew-selfadjoint spatial operator.
There, well-posedness and causality were shown under a positive-definiteness
constraint on the operator $M(\partial_{0}^{-1})$. Moreover, the
abstract solution theory was applied to several examples of linear
equations in mathematical physics. This solution theory was generalized
by the author to the case of $A$ being a maximal monotone relation
in space in \cite{Trostorff2012_NA}, where $M(\partial_{0}^{-1})$
was of the particular form $M_{0}+\partial_{0}^{-1}M_{1}.$ We will
adopt the proof for the solution theory presented in \cite{Trostorff2012_NA}
to show the well-posedness of problems of the form \prettyref{eq:problem}.
The proof mainly relies on the realization of the derivative $\partial_{0}$
as a maximal monotone operator on an exponentially weighted $L_{2}$-space
(see e.g. \cite{Picard_McGhee,picard1989hilbert} or Subsection 2.1
of this article) and the application of a well-known perturbation
result for maximal monotone relations (see \prettyref{prop:pert}).
For the theory of maximal monotone relations we refer to the monographs
\cite{Brezis,Morosanu,showalter_book,papageogiou,hu2000handbook}.
Inclusions of the form \prettyref{eq:problem} cover a large class
of possible evolutionary problems, such as integro-differential equations
\cite{Trostorff2012_integro}, delay and neutral differential equations
\cite{Kalauch2011}, certain types of control problems \cite{Picard2012_comprehensive_control,Picard2012_boundary_control}
and equations of mathematical physics involving hysteresis \cite{Trostorff2012_NA}.\\
The article is structured as follows. In Section 2 we recall the definition
of the time-derivative, of linear material laws and some basic facts
on maximal monotone relations on Hilbert spaces. Moreover, we recall
the notion of so-called abstract boundary data spaces (see \cite{Picard2012_comprehensive_control}),
which will enable us to formulate evolutionary equations with nonlinear
boundary condition as an inclusion of the form \prettyref{eq:problem}.
In Section 3 the solution theory for \prettyref{eq:problem} is presented.
In the remaining part of the article we consider a certain type of
partial differential equations with nonlinear boundary conditions.
Those problems occur for instance in frictional contact problems (see
e.g. \cite{do1985inequality,Sofonea2009,Migorski_2010,Migorski_2010_cylinder})
in the field of elasticity, where the boundary condition is given
by a differential inclusion. In Subsection 4.1 we study an abstract
nonlinear differential operator $A$, which is given by the restriction
of a linear operator to elements, which satisfy the nonlinear boundary
condition. We show that under certain constraints on the relation,
which occurs in the boundary condition, the operator $A$ gets maximal
monotone and thus, the solution theory developed in Section 3 applies.
We illustrate our results in Subsection 4.2. There, in a first example
we consider the wave equation with an impedance-type boundary condition.
This problem was originally considered in \cite{Picard2012_Impedance}
and we show that it fits in our framework. The second example deals
with the equations of visco-elasticity with a frictional boundary
condition, which is modelled by a differential inclusion on the boundary.
A similar kind of this problem was considered in \cite{Migorski_2009}
for a cylindrical domain and antiplane shear deformations. Again we
show that the equations are covered by our abstract inclusion \prettyref{eq:problem}.\\
Throughout every Hilbert space is assumed to be complex. Its inner
product and its norm are denoted by $\langle\cdot|\cdot\rangle$ and
$|\cdot|,$ respectively, where the inner product is assumed to be
linear with respect to the second and conjugate linear with respect
to the first argument. Moreover, for a Hilbert space $H$ and a closed
subspace $V\subseteq H$ we denote by $\pi_{V}:H\to V$ the orthogonal
projection onto $V.$%
\footnote{Note that then the adjoint $\pi_{V}^{\ast}:V\to H$ is just the canonical
embedding.%
} Then $P_{V}\coloneqq\pi_{V}^{\ast}\pi_{V}:H\to H$ is the orthogonal
projector on $V$.

\section{Preliminaries}

\subsection{The time derivative and linear material laws}

In this subsection we recall, how to establish the time-derivative
as a normal, boundedly invertible linear operator and we recall the
notion of linear material laws. For the proofs and more details we
refer to \cite{Picard,Kalauch2011,Picard_McGhee}. Let $H$ be a Hilbert
space. For $\nu>0$ we define $H_{\nu,0}(\mathbb{R};H)$ as the space
of measurable, $H$-valued functions on $\mathbb{R}$, which are square-integrable%
\footnote{Throughout we identify the equivalence classes with respect to equality
almost everywhere with their representatives.%
} with respect to the exponentially weighted Lebesgue-measure $\mu_{\nu}\coloneqq\exp(-2\nu\,\cdot\,)\lambda,$
where by $\lambda$ we denote the one-dimensional Lebesgue measure.
Moreover, we set $H_{\nu,0}(\mathbb{R})\coloneqq H_{\nu,0}(\mathbb{R};\mathbb{C}).$
We define the operator $\partial_{0,\nu}$ as the closure of the mapping
\begin{align*}
\partial_{0,\nu}|_{C_{c}^{\infty}(\mathbb{R})}:C_{c}^{\infty}(\mathbb{R})\subseteq H_{\nu,0}(\mathbb{R}) & \to H_{\nu,0}(\mathbb{R})\\
\phi & \mapsto\phi',
\end{align*}
where by $C_{c}^{\infty}(\mathbb{R})$ we denote the space of arbitrarily
differentiable functions with compact support in $\mathbb{R}.$ Indeed,
the operator $\partial_{0,\nu}$ turns out to be normal and continuously
invertible and its inverse is given by 
\begin{equation}
\left(\partial_{0,\nu}^{-1}u\right)(t)\coloneqq\intop_{-\infty}^{t}u(s)\mbox{ d}s\quad(t\in\mathbb{R})\label{eq:d0_inverse}
\end{equation}
for each $u\in H_{\nu,0}(\mathbb{R}).$ Moreover $\Re\partial_{0,\nu}=\nu$,
which gives $\|\partial_{0,\nu}^{-1}\|\leq\frac{1}{\nu}$. Indeed,
one can show that the operator norm of $\partial_{0,\nu}^{-1}$ is
equal to $\nu^{-1}$ (see e.g. \cite{Kalauch2011}). Moreover, equation
\prettyref{eq:d0_inverse} shows the causality%
\footnote{Let $E,F$ be vector spaces and $G:\mathcal{D}(G)\subseteq E^{\mathbb{R}}\to F^{\mathbb{R}}$.
$G$ is called \emph{causal }(cf. \cite{lakshmikantham2010theory}),\emph{
}if for each $a\in\mathbb{R}$ and $f,g\in\mathcal{D}(G)$ the implication
\[
\left(\chi_{(-\infty,a]}(m)\left(f-g\right)=0\right)\Rightarrow\left(\chi_{(-\infty,a]}(m)\left(G(f)-G(g)\right)=0\right)
\]
holds. By $\chi_{(-\infty,a]}(m)$ we denote the multiplication operator
with the cut-off function $\chi_{(-\infty,a]}$, i.e. $\left(\chi_{(-\infty,a]}(m)f\right)(t)\coloneqq\chi_{(-\infty,a]}(t)f(t).$ %
} of $\partial_{0,\nu}^{-1}$. \\
Furthermore, we can give a spectral representation for $\partial_{0,\nu}$
in terms of the so-called \emph{Fourier-Laplace-transform} $\mathcal{L}_{\nu}:H_{\nu,0}(\mathbb{R})\to L_{2}(\mathbb{R}),$
which is given by $\mathcal{L}_{\nu}\coloneqq\mathcal{F}\exp(-\nu m),$
where $\mathcal{F}$ denotes the Fourier-transform in $L_{2}(\mathbb{R})$
given by the unitary extension of 
\[
\left(\mathcal{F}\phi\right)(x)\coloneqq\frac{1}{\sqrt{2\pi}}\intop_{\mathbb{R}}\phi(y)\exp(-\i xy)\mbox{ d}y\quad(x\in\mathbb{R},\phi\in C_{c}^{\infty}(\mathbb{R}))
\]
and $\exp(-\nu m):H_{\nu,0}(\mathbb{R})\to L_{2}(\mathbb{R})$ is
defined by $\left(\exp(-\nu m)u\right)(t)=\exp(-\nu t)u(t).$ Clearly,
$\exp(-\nu m)$ is a unitary operator and so is $\mathcal{L}_{\nu}.$
Using the well-known spectral representation of the weak derivative
on $L_{2}(\mathbb{R})$ via the Fourier-transform $\mathcal{F}$,
we get that 
\begin{equation}
\partial_{\nu,0}=\mathcal{L}_{\nu}^{\ast}\left(\i m+\nu\right)\mathcal{L}_{\nu},\label{eq:spectral_derivative}
\end{equation}
where $m:\mathcal{D}(m)\subseteq L_{2}(\mathbb{R})\to L_{2}(\mathbb{R})$
denotes the multiplication-by-the-argument operator with maximal domain,
i.e. $\left(mf\right)(t)\coloneqq tf(t)$ for all $f\in\mathcal{D}(m)=\{g\in L_{2}(\mathbb{R})\,|\,(x\mapsto xg(x))\in L_{2}(\mathbb{R})\}$
and $t\in\mathbb{R}$. Note that the operators $\partial_{0,\nu}$
and $\mathcal{L}_{\nu}$ can be extended to $H_{\nu,0}(\mathbb{R};H)\cong H_{\nu,0}(\mathbb{R})\otimes H$
in the canonical way. \\
Next we consider a bounded strongly measurable mapping $M:B_{\mathbb{C}}\left(\frac{1}{2\nu},\frac{1}{2\nu}\right)\to L(H)$,
where $B_{\mathbb{C}}(x,r)$ denotes the open ball in $\mathbb{C}$
around the center $x\in\mathbb{C}$ with radius $r>0.$ We define
the bounded linear operator $M\left(\frac{1}{\i m+\nu}\right):L_{2}(\mathbb{R};H)\to L_{2}(\mathbb{R};H)$
by setting 
\[
\left(M\left(\frac{1}{\i m+\nu}\right)u\right)(t)\coloneqq M\left(\frac{1}{\i t+\nu}\right)u(t)\quad(t\in\mathbb{R}).
\]
The spectral representation \prettyref{eq:spectral_derivative} of
the derivative $\partial_{0,\nu}$ now leads to the definition of
the linear and bounded operator $M\left(\partial_{0,\nu}^{-1}\right)\coloneqq\mathcal{L}_{\nu}^{\ast}M\left(\frac{1}{\i m+\nu}\right)\mathcal{L}_{\nu}:H_{\nu,0}(\mathbb{R};H)\to H_{\nu,0}(\mathbb{R};H).$
If we assume additionally that $M$ is analytic, we call $M(\partial_{0,\nu}^{-1})$
a \emph{linear material-law}.%
\footnote{The analyticity yields, employing a Paley-Wiener-type result (cf.
\cite{rudin1987real}), the causality of the operator $M\left(\partial_{0,\nu}^{-1}\right).$%
} Moreover, as $M\left(\partial_{0,\nu}^{-1}\right)$ is a function
of $\partial_{0,\nu}^{-1}$, we obtain the translation invariance
of $M\left(\partial_{0,\nu}^{-1}\right),$ i.e. for each $h\in\mathbb{R}$
we have $\tau_{h}M\left(\partial_{0,\nu}^{-1}\right)=M\left(\partial_{0,\nu}^{-1}\right)\tau_{h}$,
where by $\tau_{h}$ we denote the translation operator on $H_{\nu,0}(\mathbb{R};H)$
given by $\left(\tau_{h}u\right)(t)\coloneqq u(t+h)$ for $u\in H_{\nu,0}(\mathbb{R};H)$
and $t\in\mathbb{R}.$

\subsection{Maximal monotone relations}

We recall the notion of monotone and maximal monotone relations in
Hilbert spaces. Moreover we state Minty's famous theorem, which gives
a characterization of maximal monotonicity, which will be frequently
used in the forthcoming parts. Additionally, we recall the definition
of the Yosida-approximation of a maximal monotone relation and state
a well-known perturbation result, which will be the key argument for
proving our solution theory for inclusions of the form \prettyref{eq:problem}.
Throughout let $H$ be a Hilbert space and $A\subseteq H\oplus H$
be a binary relation. 
\begin{defn*}
The relation $A$ is called \emph{monotone, }if for each $(u,v),(x,y)\in A$
the inequality 
\[
\Re\langle u-x|v-y\rangle\geq0
\]
holds. We call $A$ \emph{maximal monotone}, if $A$ is monotone and
there exists no proper monotone extension of $A$, i.e. for each monotone
relation $B\subseteq H\oplus H$ with $A\subseteq B$ it follows that
$A=B.$
\end{defn*}
From this definition we can immediately derive the \emph{demi-closedness}
of maximal monotone relations, which in particular means that if $(u_{n})_{n\in\mathbb{N}}$
and $(v_{n})_{n\in\mathbb{N}}$ are sequences in $H$ such that $u_{n}\rightharpoonup u$
and $v_{n}\to v$ and $(u_{n},v_{n})\in A$ for each $n\in\mathbb{N},$
then $(u,v)\in A.$ \\
Before we state Minty's Theorem, we introduce a linear structure on
the set of binary relations. Let $H_{0},H_{1}$ be two Hilbert spaces,
$B,C\subseteq H_{0}\oplus H_{1}$ and $\lambda\in\mathbb{C}.$ Then
we define the relation $\lambda B+C\subseteq H_{0}\oplus H_{1}$ by
\[
\lambda B+C\coloneqq\left\{ \left.(u,v)\in H_{0}\oplus H_{1}\,\right|\,\exists x,y\in H_{1}:(u,x)\in B,(u,y)\in C,v=\lambda x+y\right\} .
\]
Moreover, for $M\subseteq H_{0}$ and $N\subseteq H_{1}$ we set 
\begin{align*}
B[M] & \coloneqq\left\{ y\in H_{1}\,|\,\exists x\in M:(x,y)\in B\right\} ,\\
{}[N]B & \coloneqq\left\{ x\in H_{0}\,|\,\exists y\in N:(x,y)\in B\right\} .
\end{align*}
For a monotone relation we can characterize the maximal monotonicity
by using Minty's Theorem. A proof can be found for instance in \cite{Minty,Brezis,Trostorff_2011}.
\begin{thm}[G. Minty, \cite{Minty}]
\label{thm:Minty} Let $A$ be monotone. Then the following statements
are equivalent:

\begin{enumerate}[(i)]

\item $A$ is maximal monotone,

\item there exists $\lambda>0$ such that $\left(1+\lambda A\right)[H]=H,$

\item for each $\lambda>0$ it holds $\left(1+\lambda A\right)[H]=H.$

\end{enumerate}\end{thm}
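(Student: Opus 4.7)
The plan is to prove the cycle (iii) $\Rightarrow$ (ii) $\Rightarrow$ (i) $\Rightarrow$ (iii). The first implication is immediate by specialising to any fixed $\lambda>0$. For (ii) $\Rightarrow$ (i), I would take any monotone extension $B\supseteq A$ and, given $(u,v)\in B$, use the hypothesised surjectivity of $1+\lambda A$ to find $(x,y)\in A$ with $x+\lambda y=u+\lambda v$, so that $u-x=-\lambda(v-y)$. Monotonicity of $B$ applied to the pair $(u,v),(x,y)$ then gives $0\le\Re\langle u-x|v-y\rangle=-\lambda|v-y|^{2}$, forcing $v=y$, $u=x$, and hence $(u,v)\in A$; thus $B=A$.

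The substance is the direction (i) $\Rightarrow$ (iii). My first observation is that, from monotonicity of $A$ alone, the resolvent $J_{\lambda}:=(1+\lambda A)^{-1}$ is single-valued and $1$-Lipschitz on its domain $R_{\lambda}:=(1+\lambda A)[H]$: if $u_{i}+\lambda v_{i}=f_{i}$ with $(u_{i},v_{i})\in A$, then
\[
\Re\langle u_{1}-u_{2}\,|\,f_{1}-f_{2}\rangle=|u_{1}-u_{2}|^{2}+\lambda\,\Re\langle u_{1}-u_{2}\,|\,v_{1}-v_{2}\rangle\ge|u_{1}-u_{2}|^{2},
\]
and Cauchy--Schwarz yields $|u_{1}-u_{2}|\le|f_{1}-f_{2}|$. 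Second, by the demi-closedness of $A$ already noted in the excerpt, a straightforward limit argument based on this Lipschitz estimate shows that $R_{\lambda}$ is closed in $H$.

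Third, I would establish a parameter-transfer principle: once $R_{\lambda_{0}}=H$ for some $\lambda_{0}>0$, the inclusion $f\in u+\lambda A u$ for a prescribed $\lambda$ is equivalent to the fixed-point equation
\[
u=J_{\lambda_{0}}\!\left(\tfrac{\lambda_{0}}{\lambda}f+\bigl(1-\tfrac{\lambda_{0}}{\lambda}\bigr)u\right),
\]
whose right-hand side is Lipschitz with constant $|1-\lambda_{0}/\lambda|$. For $\lambda>\lambda_{0}/2$ this is a strict contraction, so Banach's fixed-point theorem furnishes $u$; iterating the ensuing doubling argument propagates surjectivity to every $\lambda>0$. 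It therefore suffices to exhibit a single $\lambda_{0}$ with $R_{\lambda_{0}}=H$, and since $R_{\lambda_{0}}$ is nonempty (it contains $u+\lambda_{0}v$ for any $(u,v)\in A$) and closed, by connectedness of $H$ it is enough to show that $R_{\lambda_{0}}$ is also open.

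The main obstacle is precisely this openness step, which is where the full strength of maximality must be used (rather than merely demi-closedness). The standard route is a local perturbation argument around a point $f_{0}=u_{0}+\lambda v_{0}\in R_{\lambda}$: for small $h\in H$ one produces a pre-image of $f_{0}+h$ by a further Banach-type iteration built from $J_{\lambda}$ and the Lipschitz estimate above, with maximality of $A$ ensuring that no additional monotone pair obstructs the construction. Combining openness with closedness and nonemptiness then closes the cycle and proves the theorem.
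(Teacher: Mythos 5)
The paper itself does not prove this theorem: it is quoted with a citation to Minty, Brezis and \cite{Trostorff_2011}, so your attempt has to be measured against the standard proofs in those references. Most of your proposal is correct and matches the standard architecture: (iii)$\Rightarrow$(ii) is trivial; your (ii)$\Rightarrow$(i) argument (producing $(x,y)\in A$ with $x+\lambda y=u+\lambda v$ and concluding $0\le\Re\langle u-x|v-y\rangle=-\lambda|v-y|^{2}$) is exactly right; the nonexpansiveness of $J_{\lambda}$ on its range, the closedness of $R_{\lambda}$ via demi-closedness, and the resolvent identity $u=J_{\lambda_{0}}\bigl(\tfrac{\lambda_{0}}{\lambda}f+(1-\tfrac{\lambda_{0}}{\lambda})u\bigr)$ with the doubling argument transferring surjectivity between parameters are all standard and correctly executed.

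The genuine gap is the openness of $R_{\lambda}$, which is not a technical loose end but the entire content of Minty's theorem, and the sketch you give for it cannot work. A ``Banach-type iteration built from $J_{\lambda}$'' has no strict contraction available: $J_{\lambda}$ is only nonexpansive (Lipschitz constant exactly $1$ in general), so an iteration $u_{n+1}=J_{\lambda}(f_{0}+h-\dots)$ does not converge by the contraction principle, and the clause ``maximality of $A$ ensuring that no additional monotone pair obstructs the construction'' is a restatement of what must be proved, not an argument. Note that for a merely monotone $A$ the set $R_{\lambda}$ can be nonempty, closed and not open (take $A=\{(x,x):x\in C\}$ for a closed convex $C\subsetneq H$, where $R_{\lambda}=(1+\lambda)C$), so your connectedness strategy stands or falls entirely on isolating what maximality adds beyond demi-closedness --- and purely metric/contraction arguments cannot do this. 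Every actual proof invokes something substantially stronger at this point: Brouwer's fixed point theorem via the Debrunner--Flor lemma (this is the route in Brezis, through a partition-of-unity or KKM argument on finite-dimensional sections of $A$), or Kirszbraun's nonexpansive extension theorem applied to the Cayley transform $(1-\lambda A)(1+\lambda A)^{-1}$ (extend the nonexpansive map from $R_{\lambda}$ to all of $H$, observe that the extension induces a monotone extension of $A$, and conclude $R_{\lambda}=H$ by maximality), or a convex-analytic minimization of the Fitzpatrick function. You need to commit to one of these to close the proof; as written, the argument establishes everything except the theorem's core.
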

\begin{rem}
\label{rem:maximal monotone} Let $A$ be maximal monotone.

\begin{enumerate}[(a)]

\item Due to the monotonicity of $A$, the relation $\left(1+\lambda A\right)^{-1}$
is right-unique for each $\lambda>0$, i.e. a mapping, whose domain
equals $H$ according to \prettyref{thm:Minty}. Indeed, one can show
that $J_{\lambda}(A)\coloneqq\left(1+\lambda A\right)^{-1}$ is Lipschitz-continuous
with a Lipschitz-constant less than or equal to $1$ (see e.g. \cite[Theorem 1.3]{Morosanu},
\cite[Proposition 1.12]{Trostorff_2011}). Moreover, we define $A_{\lambda}\coloneqq\lambda^{-1}\left(1-J_{\lambda}(A)\right)$
for $\lambda>0$ and obtain again a Lipschitz-continuous mapping with
a Lipschitz-constant less than or equal to $\lambda^{-1}$. Furthermore,
$A_{\lambda}$ is monotone for each $\lambda>0$. The mappings $\left(A_{\lambda}\right)_{\lambda>0}$
are called the \emph{Yosida-approximation} of $A.$ Moreover, for
each $\lambda>0$ and $x\in H$ we have $(J_{\lambda}(A)(x),A_{\lambda}(x))\in A.$

\item For $x\in[H]A$ the set $A[\{x\}]$ is convex and closed. Hence,
we can find the element in $A[\{x\}]$ possessing minimal norm, and
we denote this element by $A^{0}(x).$ Then one can show, that for
each $x\in[H]A$ and $\lambda>0$ the inequality $|A_{\lambda}(x)|\leq|A^{0}(x)|$
holds (see \cite[Theorem 1.3]{Morosanu}, \cite[Proposition 1.12]{Trostorff_2011}).

\end{enumerate}
\end{rem}
Finally, we want to state two perturbation results for maximal monotone
relations. If $A,B\subseteq H\oplus H$ are two monotone relations,
then clearly $A+B$ is also monotone. However, in general the sum
of two maximal monotone relations is not maximal monotone again. So
the question is: If $A$ and $B$ are maximal monotone, when is $A+B$
maximal monotone? A positive answer can be given, if one of the relations
is a Lipschitz-continuous mapping defined on the whole space $H.$ 
\begin{lem}
\label{lem:Yosida}Let $A$ be maximal monotone and $B:H\to H$ monotone
and Lipschitz-continuous. Then $A+B$ is maximal monotone.
\end{lem}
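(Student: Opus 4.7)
The plan is to apply Minty's theorem (\prettyref{thm:Minty}): since monotonicity of $A+B$ is immediate (both are monotone, and the sum of monotone relations is monotone), it suffices to produce some $\lambda>0$ with $(1+\lambda(A+B))[H]=H$. Fix $f\in H$. The inclusion $f\in u+\lambda(A+B)(u)$ is equivalent to $f-\lambda B(u)\in(1+\lambda A)(u)$, and since $J_\lambda(A)=(1+\lambda A)^{-1}$ is single-valued on $H$ by \prettyref{rem:maximal monotone}, this reads as the fixed-point equation
\[
u=J_\lambda(A)\bigl(f-\lambda B(u)\bigr).
\]

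First I would define the map $T_f:H\to H$ by $T_f(u):=J_\lambda(A)(f-\lambda B(u))$. Using that $J_\lambda(A)$ is Lipschitz-continuous with constant at most $1$ (again by \prettyref{rem:maximal monotone}) and that $B$ is Lipschitz-continuous on $H$ with some constant $L\geq 0$, a direct estimate gives
\[
|T_f(u)-T_f(v)|\leq|{-\lambda B(u)+\lambda B(v)}|\leq \lambda L\,|u-v|.
\]
Hence, choosing any $\lambda\in(0,1/(L+1))$, the map $T_f$ is a strict contraction on the Hilbert (hence complete metric) space $H$, and Banach's fixed point theorem yields a unique $u\in H$ with $T_f(u)=u$. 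Unraveling the definition, this $u$ satisfies $f\in u+\lambda A(u)+\lambda B(u)=u+\lambda(A+B)(u)$, so $f\in(1+\lambda(A+B))[H]$. Since $f$ was arbitrary, $(1+\lambda(A+B))[H]=H$ for this $\lambda$.

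The only subtle points are: (a) if $L=0$ the argument still works with any $\lambda>0$ because $B$ is then constant and $T_f$ is automatically a contraction; (b) one must verify that the fixed point actually belongs to the domain of $A+B$, which is automatic since $J_\lambda(A)$ maps into $[H]A=\mathcal{D}(A)$ and $B$ is defined on all of $H$. I do not expect a real obstacle here; the whole proof is essentially Banach's fixed point theorem combined with Minty's characterization, and the Lipschitz bound on $J_\lambda(A)$ from \prettyref{rem:maximal monotone} is exactly what enables the contraction estimate.
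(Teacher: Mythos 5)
Your proof is correct and takes exactly the route the paper indicates: the paper gives no proof of its own but remarks that the argument ``is based on an application of the contraction mapping theorem'' (citing Brezis and the author's thesis), and your combination of Minty's theorem with a Banach fixed-point argument for $u\mapsto J_{\lambda}(A)\left(f-\lambda B(u)\right)$ with $\lambda L<1$ is precisely that standard proof. The monotonicity of $B$ enters only to guarantee that $A+B$ is monotone, as you note, and the remaining details (the Lipschitz bound on $J_{\lambda}(A)$ and the domain bookkeeping) are handled correctly.
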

The proof is based on an application of the contraction mapping theorem
and can be found for instance in \cite[Lemme 2.4]{Brezis} or in \cite[Lemma 1.15]{Trostorff_2011}.
If $A$ and $B$ are maximal monotone relations in $H,$ we can use
the previous lemma and the fact that $B_{\lambda}$ is monotone and
Lipschitz-continuous for each $\lambda>0$ to derive the following
perturbation result.
\begin{prop}
\label{prop:pert}Let $A,B\subseteq H\oplus H$ be maximal monotone
and $y\in H.$ For $\lambda>0$ define $x_{\lambda}\coloneqq\left(1+A+B_{\lambda}\right)^{-1}(y).$
If $\sup_{\lambda>0}\left|B_{\lambda}(x_{\lambda})\right|<\infty,$
then $\left(x_{\lambda}\right)_{\lambda>0}$ converges to some $x\in H$
as $\lambda\to0$ with $(x,y)\in1+A+B.$ In particular $A+B$ is maximal
monotone.
\end{prop}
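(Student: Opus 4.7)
The plan is to combine the existence of each approximate solution $x_\lambda$ (from Lemma \ref{lem:Yosida} and Minty) with the uniform bound on $B_\lambda(x_\lambda)$ to obtain strong convergence, then use demi-closedness of $A$ and $B$ to identify the limit, and finally read off maximal monotonicity via \prettyref{thm:Minty}.

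First I would check that $x_\lambda$ is well defined: since $B_\lambda$ is monotone and Lipschitz-continuous on the whole of $H$ by \prettyref{rem:maximal monotone}(a), \prettyref{lem:Yosida} guarantees that $A+B_\lambda$ is maximal monotone, and Minty's theorem yields the single-valued resolvent $(1+A+B_\lambda)^{-1}:H\to H$. Write $a_\lambda\coloneqq y-x_\lambda-B_\lambda(x_\lambda)\in A[\{x_\lambda\}]$. For $\lambda,\mu>0$ one has
\[
(x_\lambda-x_\mu)+(a_\lambda-a_\mu)+(B_\lambda(x_\lambda)-B_\mu(x_\mu))=0,
\]
so testing with $x_\lambda-x_\mu$ and using monotonicity of $A$ gives
\[
|x_\lambda-x_\mu|^{2}\leq-\Re\langle x_\lambda-x_\mu\,|\,B_\lambda(x_\lambda)-B_\mu(x_\mu)\rangle.
\]
Using the resolvent identity $x_\lambda=J_\lambda(B)x_\lambda+\lambda B_\lambda(x_\lambda)$, the right-hand side splits as a term which is nonpositive by monotonicity of $B$ applied to $(J_\lambda(B)x_\lambda,B_\lambda(x_\lambda)),(J_\mu(B)x_\mu,B_\mu(x_\mu))\in B$, plus $-\Re\langle\lambda B_\lambda(x_\lambda)-\mu B_\mu(x_\mu)\,|\,B_\lambda(x_\lambda)-B_\mu(x_\mu)\rangle$, and the latter is bounded by $(\lambda+\mu)M^{2}$ where $M\coloneqq\sup_{\lambda>0}|B_\lambda(x_\lambda)|<\infty$. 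Hence $(x_\lambda)_{\lambda>0}$ is Cauchy and converges to some $x\in H$ as $\lambda\to0$, and because $|x_\lambda-J_\lambda(B)x_\lambda|=\lambda|B_\lambda(x_\lambda)|\leq\lambda M\to0$ also $J_\lambda(B)x_\lambda\to x$ strongly.

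The passage to the limit is where demi-closedness enters, and is the main conceptual step. The family $(B_\lambda(x_\lambda))$ is bounded, hence admits a subsequence $B_{\lambda_n}(x_{\lambda_n})\rightharpoonup b$; since $(J_{\lambda_n}(B)x_{\lambda_n},B_{\lambda_n}(x_{\lambda_n}))\in B$ with $J_{\lambda_n}(B)x_{\lambda_n}\to x$ strongly, demi-closedness of the maximal monotone relation $B$ yields $(x,b)\in B$. Consequently $a_{\lambda_n}=y-x_{\lambda_n}-B_{\lambda_n}(x_{\lambda_n})\rightharpoonup a\coloneqq y-x-b$, and since $x_{\lambda_n}\to x$ strongly, demi-closedness of $A$ gives $(x,a)\in A$. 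Therefore $y=x+a+b\in(1+A+B)[\{x\}]$, proving the convergence statement. As $y\in H$ was arbitrary, $(1+A+B)[H]=H$, and since $A+B$ is trivially monotone, \prettyref{thm:Minty} identifies it as maximal monotone.

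The only genuine obstacle is the Cauchy estimate: one has to recognise that, although $B_\lambda(x_\lambda)-B_\mu(x_\mu)$ need not be small, its pairing with $x_\lambda-x_\mu$ is small, which requires splitting $x_\lambda-x_\mu$ into a $J_\lambda(B)$-part (controlled by monotonicity of $B$) and a residual $\lambda B_\lambda(x_\lambda)-\mu B_\mu(x_\mu)$ (controlled by the standing uniform bound). Everything else is a standard consequence of Minty's theorem and demi-closedness.
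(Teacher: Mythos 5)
Your proof is correct and is precisely the standard argument; the paper itself gives no proof of this proposition but defers to \cite[Lemma 7.41]{papageogiou} and \cite[Lemma 1.17]{Trostorff_2011}, which proceed exactly as you do (resolvent identity to split the cross term, Cauchy estimate of order $\lambda+\mu$, then demi-closedness to identify the limit and Minty to conclude). The only cosmetic point is that you invoke demi-closedness with the first components converging strongly and the second weakly, whereas Subsection 2.2 states it with the roles reversed; both versions hold for maximal monotone relations by the same passage to the limit in the monotonicity inequality, so nothing is lost.
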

For a proof of this statement we refer to \cite[Lemma 7.41]{papageogiou}
or \cite[Lemma 1.17]{Trostorff_2011}.

\subsection{Abstract boundary data spaces}

The last concept we need for our main results are so-called abstract
boundary data spaces, introduced in \cite{Picard2012_comprehensive_control}.
Let $H_{0},H_{1}$ be two Hilbert spaces and $G_{c}\subseteq H_{0}\oplus H_{1},D_{c}\subseteq H_{1}\oplus H_{0}$
be two densely defined closed linear operators. Moreover, assume that
$G_{c}$ and $D_{c}$ are \emph{formally skew-adjoint}, i.e. $G_{c}\subseteq\left(-D_{c}\right)^{\ast}\eqqcolon G$,
which also yields $D_{c}\subseteq\left(-G_{c}\right)^{\ast}\eqqcolon D.$%
\footnote{As a typical example one can think of $G_{c}$ and $D_{c}$ as the
closure of the gradient and the divergence on $L_{2}$, defined on
test-functions. Then the adjoints $G$ and $D$ are nothing but the
usual weak gradient and weak divergence in $L_{2}$ (see Subsection
4.2). %
} If we equip the domains of $G$ and $D$ with their respective graph-norms,
we get that $\mathcal{D}(G_{c})$ and $\mathcal{D}(D_{c})$ are closed
linear subspaces of $\mathcal{D}(G)$ and $\mathcal{D}(D)$ and hence,
the projection theorem yields 
\begin{align*}
\mathcal{D}(G) & \coloneqq\mathcal{D}(G_{c})\oplus\mathcal{D}(G_{c})^{\bot},\\
\mathcal{D}(D) & \coloneqq\mathcal{D}(D_{c})\oplus\mathcal{D}(D_{c})^{\bot},
\end{align*}
where the orthocomplements have to be taken with respect to the inner
products induced by the graph norms of $G$ and $D$, respectively.
An easy computation yields 
\begin{align*}
\mathcal{BD}(G)\coloneqq\mathcal{D}(G_{c})^{\bot} & =[\{0\}](1-DG),\\
\mathcal{BD}(D)\coloneqq\mathcal{D}(D_{c})^{\bot} & =[\{0\}](1-GD).
\end{align*}
The spaces $\mathcal{BD}(G)$ and $\mathcal{BD}(D)$ are called \emph{abstract
boundary data spaces} of $G$ and $D,$ respectively.%
\footnote{In applications, where $G_{c}$ and $D_{c}$ will be defined as the
closure of differential operators, defined on test functions, the
elements of the domains of $G_{c}$ and $D_{c}$ can be interpreted
as those elements in the domain of $G$ and $D$ with vanishing traces.
For instance, if $G_{c}$ is the closure of the gradient defined on
test-functions, then $\mathcal{D}(G_{c})$ is the classical Sobolev-space
$W_{2,0}^{1}$, i.e. the space of weakly differentiable functions
in $L_{2}$ with vanishing traces. Hence, the boundary values of a
function in $\mathcal{D}(G)$ just depend on the boundary values of
its projection onto $\mathcal{BD}(G).$ %
} Clearly, $G[\mathcal{BD}(G)]\subseteq\mathcal{BD}(D)$ and $D[\mathcal{BD}(D)]\subseteq\mathcal{BD}(G).$
We define the operators 
\begin{align*}
\Bullet G:\mathcal{BD}(G) & \to\mathcal{BD}(D),\\
\Bullet D:\mathcal{BD}(D) & \to\mathcal{BD}(G),
\end{align*}
as the respective restrictions of $G$ and $D$. As $\mathcal{BD}(G)$
and $\mathcal{BD}(D)$ are closed subspaces of $\mathcal{D}(G)$ and
$\mathcal{D}(D),$ respectively, they inherit the Hilbert space structure
from these supersets. With respect to these topologies, the operators
$\Bullet G$ and $\Bullet D$ enjoy a surprising property.
\begin{prop}[{\cite[Theorem 5.2]{Picard2012_comprehensive_control}}]
 The operators $\Bullet G$ and $\Bullet D$ are unitary. Furthermore,
$\left(\Bullet G\right)^{\ast}=\Bullet D$ and so $\left(\Bullet D\right)^{\ast}=\Bullet G.$ 
\end{prop}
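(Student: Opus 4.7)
My plan is to exploit the characterisation
\[
\mathcal{BD}(G) = [\{0\}](1-DG) = \{u \in \mathcal{D}(G) \mid Gu \in \mathcal{D}(D),\ DGu = u\},
\]
and the analogous identity for $\mathcal{BD}(D)$, which are already recorded in the excerpt. The strategy is to show (i) that $\Bullet G$ and $\Bullet D$ are mutually inverse bijections, and (ii) that each is an isometry with respect to the graph inner products. Together these give unitarity, and since $(\Bullet G)^{-1}=\Bullet D$ then $(\Bullet G)^\ast = \Bullet D$ follows automatically.

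For step (i) I first verify that $\Bullet G$ really maps into $\mathcal{BD}(D)$: for $u \in \mathcal{BD}(G)$ one has $Gu \in \mathcal{D}(D)$ and $DGu = u \in \mathcal{D}(G)$, hence $GD(Gu) = Gu$, which is exactly the defining equation $(1-GD)(Gu)=0$ for membership in $\mathcal{BD}(D)$. The same reasoning with the roles of $G$ and $D$ reversed gives $\Bullet D: \mathcal{BD}(D) \to \mathcal{BD}(G)$. On $\mathcal{BD}(G)$ the composition $\Bullet D \Bullet G$ acts as $u \mapsto DGu = u$, and symmetrically $\Bullet G \Bullet D = \mathrm{id}_{\mathcal{BD}(D)}$. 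Hence both operators are bijective and inverse to each other.

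For step (ii) I use that $\mathcal{BD}(G)$ carries the graph inner product inherited from $\mathcal{D}(G)$. For $u \in \mathcal{BD}(G)$,
\[
\|u\|_{\mathcal{D}(G)}^2 = |u|^2 + |Gu|^2,\qquad \|\Bullet G u\|_{\mathcal{D}(D)}^2 = |Gu|^2 + |DGu|^2 = |Gu|^2+|u|^2,
\]
where the second equality uses precisely the identity $DGu=u$. Thus $\Bullet G$ is an isometry, and by symmetry so is $\Bullet D$. A bijective linear isometry between Hilbert spaces is unitary, so $\Bullet G$ and $\Bullet D$ are unitary. Since $\Bullet D = (\Bullet G)^{-1} = (\Bullet G)^\ast$, the adjoint relations follow, and then $(\Bullet D)^\ast = \Bullet G$ by taking adjoints once more.

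The one place where care is needed — and what I would regard as the only mild obstacle — is making sure that the orthogonal complement description of $\mathcal{BD}(G)$ and the kernel description $[\{0\}](1-DG)$ are genuinely being used consistently: the graph inner product is what turns the algebraic identity $DGu=u$ into the Pythagorean relation that yields isometry. Everything else is a transparent bookkeeping of domains.
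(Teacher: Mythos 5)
Your proof is correct and complete: the identities $DGu=u$ on $\mathcal{BD}(G)$ and $GDv=v$ on $\mathcal{BD}(D)$ give both that $\Bullet G$ and $\Bullet D$ are mutually inverse bijections and that they are graph-norm isometries, whence unitarity and $(\Bullet G)^{\ast}=(\Bullet G)^{-1}=\Bullet D$. The paper itself offers no proof, only the citation to \cite[Theorem 5.2]{Picard2012_comprehensive_control}, and your argument is precisely the standard one for this statement, so there is nothing to object to.
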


\section{Evolutionary inclusions}

Let $H$ be a Hilbert space. In this section we study evolutionary
inclusions of the form 
\begin{equation}
(u,f)\in\overline{\partial_{0,\nu}M(\partial_{0,\nu}^{-1})+A},\label{eq:evol}
\end{equation}
where $M:B_{\mathbb{C}}\left(\frac{1}{2\nu},\frac{1}{2\nu}\right)\to L(H)$
is a linear material law for some $\nu>0$ and $A\subseteq H_{\nu,0}(\mathbb{R};H)\oplus H_{\nu,0}(\mathbb{R};H)$
is an autonomous maximal monotone relation, i.e. $A$ is maximal monotone
and for each $h\in\mathbb{R}$ and $(u,v)\in A$ it follows that $\left(u(\cdot+h),v(\cdot+h)\right)\in A.$
The function $f\in H_{\nu,0}(\mathbb{R};H)$ is a given source term
and $u\in H_{\nu,0}(\mathbb{R};H)$ is the unknown. We address the
question of well-posedness of this problem, i.e. we show the uniqueness,
existence and continuous dependence of a solution $u$ on the given
data $f.$ More precisely, we show that $\left(\overline{\partial_{0,\nu}M(\partial_{0,\nu}^{-1})+A}\right)^{-1}$
is a Lipschitz-continuous mapping, whose domain is the whole space
$H_{\nu,0}(\mathbb{R};H).$ The second property we consider, is causality
of the \emph{solution operator $\left(\overline{\partial_{0,\nu}M(\partial_{0,\nu}^{-1})+A}\right)^{-1}.$ }

\subsection{Well-posedness}

This subsection is devoted to the well-posedness of differential inclusions
of the form \prettyref{eq:evol}. Throughout, let $H$ be a Hilbert
space, $M:B_{\mathbb{C}}\left(\frac{1}{2\nu},\frac{1}{2\nu}\right)\to L(H)$
a bounded strongly measurable function for some $\nu>0$ and $A\subseteq H_{\nu,0}(\mathbb{R};H)\oplus H_{\nu,0}(\mathbb{R};H)$. 

\begin{hyp} We say that $M$ and $A$ satisfy the hypotheses (H1),
(H2) and (H3) respectively, if 

\begin{enumerate}[(H1)]

\item there exists $c>0$ such that for all $z\in B_{\mathbb{C}}\left(\frac{1}{2\nu},\frac{1}{2\nu}\right)$
the inequality $\Re z^{-1}M(z)\geq c$ holds.

\item $A$ is maximal monotone and \emph{autonomous}, i.e. for every
$h\in\mathbb{R}$ and $(u,v)\in A$ we have $\left(u(\cdot+h),v(\cdot+h)\right)\in A.$

\item for all $(u,v),(x,y)\in A$ the estimate $\intop_{-\infty}^{0}\Re\langle u(t)-x(t)|v(t)-y(t)\rangle e^{-2\nu t}\mbox{d}t\geq0$
holds.

\end{enumerate}

\end{hyp}
\begin{rem}
\label{rem:autonomous relations}$\,$

\begin{enumerate}[(a)]

\item A typical example for an autonomous maximal monotone relation
is the extension of a maximal monotone relation $B\subseteq H\oplus H,$
given by 
\[
A\coloneqq\{(u,v)\in H_{\nu,0}(\mathbb{R};H)\oplus H_{\nu,0}(\mathbb{R};H)\,|\,\left(u(t),v(t)\right)\in B\mbox{ for almost every }t\in\mathbb{R}\}.
\]
If one assumes that $(0,0)\in B,$ then $A$ is maximal monotone (cf.
\cite[p. 31]{Morosanu}).

\item In case of $A$ being the canonical extension of a skew-selfadjoint
operator on $H$, the evolutionary problem \prettyref{eq:evol} was
originally considered by Picard in several works (cf. \cite{Picard,Picard2010,Picard_McGhee}).
The more general case of an autonomous operator $A:D(A)\subseteq H_{\nu,0}(\mathbb{R};H)\to H_{\nu,0}(\mathbb{R};H)$
was treated in \cite{Picard2012_Impedance}. 

\item If $A$ is the extension of a maximal monotone relation $B\subseteq H\oplus H$
(compare (a)) and $M(\partial_{0,\nu}^{-1})$ is of the particular
form $M(\partial_{0,\nu}^{-1})=M_{0}+\partial_{0,\nu}^{-1}M_{1}$,
the problem was addressed by the author in \cite{Trostorff2012_NA}.
Indeed, in \cite{Trostorff2012_NA} the inclusion was studied on the
half line $\mathbb{R}_{\geq0}$ instead of $\mathbb{R},$ which in
particular implies that one can omit the additional assumption $(0,0)\in B$
in (a) in order to obtain the maximal monotonicity of the extension
$A.$ 

\end{enumerate}
\end{rem}
First, we state the following simple, but useful observation.
\begin{lem}
\label{lem:core}Let $\nu>0$ and $T\in L(H_{\nu,0}(\mathbb{R};H))$.
Assume that $T\partial_{0,\nu}\subseteq\partial_{0,\nu}T$. Then $\mathcal{D}(\partial_{0,\nu})$
is a core of $\partial_{0,\nu}T$ and $\left(\partial_{0,\nu}T\right)^{\ast}=\partial_{0,\nu}^{\ast}T^{\ast}.$\end{lem}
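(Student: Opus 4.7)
The plan is to first turn the assumed commutation of $T$ with the unbounded operator $\partial_{0,\nu}$ into commutation with the bounded resolvents, then use these resolvents as an approximating device for the core property, and finally read off the adjoint formula by testing only against elements of $\mathcal{D}(\partial_{0,\nu})$.

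First I would verify that $T\partial_{0,\nu}^{-1}=\partial_{0,\nu}^{-1}T$ on all of $H_{\nu,0}(\mathbb{R};H)$: for $v\in H_{\nu,0}(\mathbb{R};H)$ the element $u:=\partial_{0,\nu}^{-1}v$ lies in $\mathcal{D}(\partial_{0,\nu})$, and the hypothesis $T\partial_{0,\nu}\subseteq\partial_{0,\nu}T$ yields $Tu\in\mathcal{D}(\partial_{0,\nu})$ together with $\partial_{0,\nu}Tu=T\partial_{0,\nu}u=Tv$. An analogous argument with the contraction $R_{\lambda}:=(1+\lambda\partial_{0,\nu})^{-1}$ (which exists as a bounded operator for every $\lambda>0$ because $\Re\partial_{0,\nu}=\nu>0$) shows $TR_{\lambda}=R_{\lambda}T$: solving $v+\lambda\partial_{0,\nu}v=u$ and applying $T$ gives $Tv+\lambda\partial_{0,\nu}Tv=Tu$, i.e.\ $Tv=R_{\lambda}Tu$.

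For the core statement, note that $\partial_{0,\nu}T$ is already closed as the composition of a bounded everywhere-defined operator followed by a closed one, with domain $\{u\in H_{\nu,0}(\mathbb{R};H)\,|\,Tu\in\mathcal{D}(\partial_{0,\nu})\}$. Given such a $u$, set $u_n:=R_{1/n}u\in\mathcal{D}(\partial_{0,\nu})$. Standard resolvent calculus gives $u_n\to u$ in $H_{\nu,0}(\mathbb{R};H)$, while the commutation established above yields $Tu_n=R_{1/n}Tu$ and, since $Tu\in\mathcal{D}(\partial_{0,\nu})$, $\partial_{0,\nu}Tu_n=R_{1/n}\partial_{0,\nu}Tu\to\partial_{0,\nu}Tu$. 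Hence $u_n\to u$ in the graph norm of $\partial_{0,\nu}T$, proving that $\mathcal{D}(\partial_{0,\nu})$ is a core.

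For the adjoint identity, I would use the core property to test only on $\mathcal{D}(\partial_{0,\nu})$. For $u\in\mathcal{D}(\partial_{0,\nu})$ the commutation gives $\langle \partial_{0,\nu}Tu\,|\,y\rangle=\langle T\partial_{0,\nu}u\,|\,y\rangle=\langle\partial_{0,\nu}u\,|\,T^{\ast}y\rangle$, so $y\in\mathcal{D}((\partial_{0,\nu}T)^{\ast})$ is equivalent to $T^{\ast}y\in\mathcal{D}(\partial_{0,\nu}^{\ast})$, and in that case $(\partial_{0,\nu}T)^{\ast}y=\partial_{0,\nu}^{\ast}T^{\ast}y$. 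The only delicate point, and the one that really uses the hypothesis, is the commutation with $R_{\lambda}$ underlying the approximation; everything else is formal manipulation of adjoints and closed/bounded compositions, so I do not expect any genuine obstacle.
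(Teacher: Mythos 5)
Your proposal is correct and follows essentially the same route as the paper: the core property is obtained by regularizing with the resolvents $(1+\varepsilon\partial_{0,\nu})^{-1}$, which commute with $T$ by hypothesis, exactly as in the paper's proof. Your adjoint argument merely spells out by hand what the paper gets by noting $\partial_{0,\nu}T=\overline{T\partial_{0,\nu}}$ and invoking the standard identity $\left(T\partial_{0,\nu}\right)^{\ast}=\partial_{0,\nu}^{\ast}T^{\ast}$ for bounded $T$.
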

\begin{proof}
Let $u\in\mathcal{D}(\partial_{0,\nu}T),$ i.e. $Tu\in\mathcal{D}(\partial_{0,\nu}).$
For $\varepsilon>0$ we define $u_{\varepsilon}\coloneqq(1+\varepsilon\partial_{0,\nu})^{-1}u\in\mathcal{D}(\partial_{0,\nu})$
and we obtain that \foreignlanguage{english}{$u_{\varepsilon}\to u$}
as $\varepsilon\to0$ as well as 
\[
\partial_{0,\nu}Tu_{\varepsilon}=\partial_{0,\nu}T(1+\varepsilon\partial_{0,\nu})^{-1}u=(1+\varepsilon\partial_{0,\nu})^{-1}\partial_{0,\nu}Tu\to\partial_{0,\nu}Tu\quad(\varepsilon\to0).
\]
In particular, this yields that $\partial_{0,\nu}T=\overline{T\partial_{0,\nu}}$
and hence, 
\[
\left(\partial_{0,\nu}T\right)^{\ast}=\left(T\partial_{0,\nu}\right)^{\ast}=\partial_{0,\nu}^{\ast}T^{\ast}.\tag*{\qedhere}
\]
\end{proof}
\begin{lem}
\label{lem:maximal monoton material law} Assume that $M$ satisfies
(H1). Then $\partial_{0,\nu}M(\partial_{0,\nu}^{-1})-c$ is a maximal
monotone operator in $H_{\nu,0}(\mathbb{R};H).$\end{lem}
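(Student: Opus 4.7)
The plan is to reduce the claim to a pointwise condition on the operator symbol via the Fourier--Laplace transform $\mathcal{L}_{\nu}$, and then to apply Minty's theorem. Since $M(\partial_{0,\nu}^{-1})$ is a function of $\partial_{0,\nu}^{-1}$, it commutes with $\partial_{0,\nu}$ in the sense $M(\partial_{0,\nu}^{-1})\partial_{0,\nu}\subseteq\partial_{0,\nu}M(\partial_{0,\nu}^{-1})$, so \prettyref{lem:core} applies: $\mathcal{D}(\partial_{0,\nu})$ is a core of $\partial_{0,\nu}M(\partial_{0,\nu}^{-1})$, and on this core the spectral representation \prettyref{eq:spectral_derivative} gives
\[
\partial_{0,\nu}M(\partial_{0,\nu}^{-1})=\mathcal{L}_{\nu}^{\ast}\,(\i m+\nu)\,M\!\left(\tfrac{1}{\i m+\nu}\right)\mathcal{L}_{\nu}.
\]

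For monotonicity I would pick $u\in\mathcal{D}(\partial_{0,\nu})$, set $\hat{u}\coloneqq\mathcal{L}_{\nu}u$, and compute
\[
\Re\langle u|(\partial_{0,\nu}M(\partial_{0,\nu}^{-1})-c)u\rangle_{H_{\nu,0}(\mathbb{R};H)}=\intop_{\mathbb{R}}\Re\langle \hat{u}(t)|(z(t)^{-1}M(z(t))-c)\hat{u}(t)\rangle_{H}\dd t,
\]
where $z(t)\coloneqq(\i t+\nu)^{-1}$. Hypothesis (H1) makes the integrand pointwise nonnegative, and since $\mathcal{D}(\partial_{0,\nu})$ is a core the inequality passes to the full domain of $\partial_{0,\nu}M(\partial_{0,\nu}^{-1})-c$.

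For maximal monotonicity, by \prettyref{thm:Minty} it suffices to verify that $(1+\partial_{0,\nu}M(\partial_{0,\nu}^{-1})-c)[H_{\nu,0}(\mathbb{R};H)]=H_{\nu,0}(\mathbb{R};H)$. Given $f\in H_{\nu,0}(\mathbb{R};H)$, I would construct the candidate solution in the Fourier--Laplace picture by setting
\[
\hat{u}(t)\coloneqq\bigl((1-c)+z(t)^{-1}M(z(t))\bigr)^{-1}\hat{f}(t),\qquad u\coloneqq\mathcal{L}_{\nu}^{\ast}\hat{u}.
\]
The key point is that (H1) yields $\Re\bigl((1-c)+z^{-1}M(z)\bigr)\geq(1-c)+c=1$ uniformly in $z$, so the pointwise inverses exist and are bounded by $1$ uniformly. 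This gives $\hat{u}\in L_{2}(\mathbb{R};H)$ and therefore $u\in H_{\nu,0}(\mathbb{R};H)$. Rearranging the defining equation yields
\[
(\i m+\nu)M\!\left(\tfrac{1}{\i m+\nu}\right)\hat{u}=\hat{f}-(1-c)\hat{u}\in L_{2}(\mathbb{R};H),
\]
which shows $M(\partial_{0,\nu}^{-1})u\in\mathcal{D}(\partial_{0,\nu})$, i.e.\ $u\in\mathcal{D}(\partial_{0,\nu}M(\partial_{0,\nu}^{-1}))$, and by construction $(1+\partial_{0,\nu}M(\partial_{0,\nu}^{-1})-c)u=f$.

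The main obstacle is really the domain bookkeeping: one must justify that $\mathcal{D}(\partial_{0,\nu})$ is a genuine core (handled by \prettyref{lem:core}) so that monotonicity extends from smooth $u$, and one must verify that the candidate $u$ solving the range equation actually lies in $\mathcal{D}(\partial_{0,\nu}M(\partial_{0,\nu}^{-1}))$. Once the uniform pointwise estimate $\Re((1-c)+z^{-1}M(z))\geq 1$ is extracted from (H1), both the monotonicity inequality and the Minty surjectivity condition follow immediately.
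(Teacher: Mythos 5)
Your proposal is correct and follows essentially the same route as the paper: reduce via the unitary $\mathcal{L}_{\nu}$ to the multiplication-operator picture, get monotonicity pointwise from (H1), and get the Minty range condition from the uniform lower bound $\Re\bigl((1-c)+z^{-1}M(z)\bigr)\geq1$. The only (cosmetic) difference is in the surjectivity step, where you invert the symbol pointwise while the paper argues abstractly via strict monotonicity of the adjoint, dense range and closedness; note that your claim that the pointwise inverses ``exist'' also needs the adjoint bound $\Re\bigl((1-c)+\overline{z}^{-1}M(z)^{\ast}\bigr)\geq1$ (accretivity alone gives only injectivity and closed range), which is exactly the point the paper makes explicit.
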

\begin{proof}
Using the unitary equivalence of $\partial_{0,\nu}M(\partial_{0,\nu}^{-1})$
in $H_{\nu,0}(\mathbb{R};H)$ and $\left(\i m+\nu\right)$$M\left(\frac{1}{\i m+\nu}\right)$
in $L_{2}(\mathbb{R};H)$, where $m$ denotes the multiplication-by-the-argument
operator (see Subsection 2.1), it suffices to prove the maximal monotonicity
of $\left(\i m+\nu\right)M\left(\frac{1}{\i m+\nu}\right)-c$ in $L_{2}(\mathbb{R};H).$
The monotonicity follows immediately from (H1). Furthermore, by (H1)
we get the strict monotonicity of $\left(\left(\i m+\nu\right)M\left(\frac{1}{\i m+\nu}\right)\right)^{\ast}=\left(-\i m+\nu\right)M\left(\frac{1}{\i m+\nu}\right)^{\ast}$,
where for $u\in L_{2}(\mathbb{R};H)$ and almost every $t\in\mathbb{R}$
we have that $\left(M\left(\frac{1}{\i m+\nu}\right)^{\ast}u\right)(t)=M\left(\frac{1}{\i t+\nu}\right)^{\ast}u(t).$
The latter implies that the operator $\left(\i m+\nu\right)M\left(\frac{1}{\i m+\nu}\right)$
has a dense range, which gives, using the closedness and the continuous
invertibility of the operator, that $\left(\i m+\nu\right)M\left(\frac{1}{\i m+\nu}\right)$
is onto. Hence \prettyref{thm:Minty} yields the assertion.\end{proof}
\begin{prop}
\label{prop:uniq} Let $B\subseteq H_{\nu,0}(\mathbb{R};H)\oplus H_{\nu,0}(\mathbb{R};H)$
be monotone and let $M$ satisfy (H1). Then for each $(u,f),(v,g)\in\partial_{0,\nu}M\left(\partial_{0,\nu}^{-1}\right)+B$
the estimate 
\[
|u-v|_{H_{\nu,0}(\mathbb{R};H)}\leq\frac{1}{c}|f-g|_{H_{\nu,0}(\mathbb{R};H)}
\]
holds.\end{prop}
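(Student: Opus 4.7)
The plan is to establish the estimate by exploiting the strict monotonicity $\Re\langle \partial_{0,\nu}M(\partial_{0,\nu}^{-1})w \mid w\rangle \ge c|w|^{2}$ which Lemma 3.3 provides, combined with the bare monotonicity of $B$, and then conclude via Cauchy--Schwarz.

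More precisely, given $(u,f),(v,g) \in \partial_{0,\nu}M(\partial_{0,\nu}^{-1}) + B$, the definition of the sum of relations yields decompositions
\[
 f = \partial_{0,\nu}M(\partial_{0,\nu}^{-1})u + a, \qquad g = \partial_{0,\nu}M(\partial_{0,\nu}^{-1})v + b,
\]
with $(u,a),(v,b)\in B$. In particular $u-v \in \mathcal{D}(\partial_{0,\nu}M(\partial_{0,\nu}^{-1}))$, so I can take the inner product of
\[
 f-g \;=\; \partial_{0,\nu}M(\partial_{0,\nu}^{-1})(u-v) + (a-b)
\]
with $u-v$ in $H_{\nu,0}(\mathbb{R};H)$ and pass to real parts.

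By monotonicity of $B$, the term $\Re\langle u-v \mid a-b\rangle$ is nonnegative. By Lemma 3.3, the operator $\partial_{0,\nu}M(\partial_{0,\nu}^{-1}) - c$ is (maximal) monotone on $H_{\nu,0}(\mathbb{R};H)$, which yields
\[
 \Re\langle u-v \mid \partial_{0,\nu}M(\partial_{0,\nu}^{-1})(u-v)\rangle \;\ge\; c\,|u-v|_{H_{\nu,0}(\mathbb{R};H)}^{2}.
\]
Adding the two contributions gives $\Re\langle u-v \mid f-g\rangle \ge c\,|u-v|^{2}$. An application of the Cauchy--Schwarz inequality to the left-hand side then produces the desired estimate $|u-v|\le \tfrac{1}{c}|f-g|$ (the inequality being trivial if $u=v$).

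There is no real obstacle here: the whole content has already been packaged into Lemma 3.3. The only point requiring mild care is the decomposition step at the beginning, i.e.\ reading off from the definition of $\partial_{0,\nu}M(\partial_{0,\nu}^{-1}) + B$ that one can split the right-hand sides so that the first summand lies in the (single-valued, densely defined) operator $\partial_{0,\nu}M(\partial_{0,\nu}^{-1})$ and the second in the relation $B$ with the same first components $u$ and $v$.
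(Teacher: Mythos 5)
Your proof is correct and follows essentially the same route as the paper: the paper likewise invokes Lemma \ref{lem:maximal monoton material law} to get monotonicity of $\partial_{0,\nu}M(\partial_{0,\nu}^{-1})-c$, adds the monotone relation $B$, and finishes with Cauchy--Schwarz, merely leaving the decomposition and the inner-product computation as "an easy argument." Your write-up simply makes that argument explicit.
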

\begin{proof}
Since the operator $\partial_{0,\nu}M\left(\partial_{0,\nu}^{-1}\right)-c$
is monotone, according to \prettyref{lem:maximal monoton material law},
we obtain that $\partial_{0,\nu}M\left(\partial_{0,\nu}^{-1}\right)-c+B$
is monotone. An easy argument, using the Cauchy-Schwarz-inequality,
yields the assertion.\end{proof}
\begin{rem}
\prettyref{prop:uniq} especially yields the uniqueness and continuous
dependence of a solution $u$ of \prettyref{eq:evol} on the given
right hand side $f$.
\end{rem}
The next proposition yields the existence of a solution of \prettyref{eq:evol}
for every right hand side $f\in H_{\nu,0}(\mathbb{R};H)$. The proof
uses the perturbation result given in \prettyref{prop:pert} in order
to show the existence of a solution for $f\in C_{c}^{\infty}(\mathbb{R};H)$
and follows the same strategy as the one in \cite[Proposition 4.6]{Trostorff2012_NA}.
\begin{prop}
\label{prop:existence}Let $M$ satisfy (H1) and $A$ satisfy (H2).
Then for each $f\in C_{c}^{\infty}(\mathbb{R};H)$ there exists $u\in H_{\nu,0}(\mathbb{R};H)$
such that 
\[
(u,f)\in\partial_{0,\nu}M\left(\partial_{0,\nu}^{-1}\right)+A.
\]
\end{prop}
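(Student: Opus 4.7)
The strategy is to regularize $A$ by its Yosida approximation $A_\lambda$, solve the regularized equations, establish a uniform bound on $|A_\lambda u_\lambda|$ via a translation argument exploiting the autonomy hypothesis (H2), and then pass to the limit by invoking \prettyref{prop:pert}. This follows the template of \cite[Proposition 4.6]{Trostorff2012_NA}.

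First I would solve the approximate problem. By \prettyref{rem:maximal monotone}(a), $A_\lambda$ is monotone and Lipschitz-continuous on all of $H_{\nu,0}(\mathbb{R};H)$. Combining this with the maximal monotonicity of $\partial_{0,\nu}M(\partial_{0,\nu}^{-1})-c$ from \prettyref{lem:maximal monoton material law} and applying \prettyref{lem:Yosida}, the relation $\partial_{0,\nu}M(\partial_{0,\nu}^{-1})-c+A_\lambda$ is maximal monotone. Hence \prettyref{thm:Minty} applied with parameter $1/c$ yields that
\[
\left(1+\tfrac{1}{c}\bigl(\partial_{0,\nu}M(\partial_{0,\nu}^{-1})-c+A_\lambda\bigr)\right)[H_{\nu,0}(\mathbb{R};H)]=H_{\nu,0}(\mathbb{R};H),
\]
i.e.\ $\partial_{0,\nu}M(\partial_{0,\nu}^{-1})+A_\lambda$ is onto. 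Thus for each $\lambda>0$ there exists $u_\lambda\in\mathcal{D}(\partial_{0,\nu}M(\partial_{0,\nu}^{-1}))$ with
\[
\partial_{0,\nu}M(\partial_{0,\nu}^{-1})u_\lambda+A_\lambda u_\lambda=f.
\]

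The crucial step, which will be the main obstacle, is to establish $\sup_{\lambda>0}|A_\lambda u_\lambda|_{H_{\nu,0}(\mathbb{R};H)}<\infty$. For this I would use a translation argument. Since $\partial_{0,\nu}$, $M(\partial_{0,\nu}^{-1})$ and (by the autonomy (H2), which passes to $J_\mu(A)$ and hence to $A_\lambda$) also $A_\lambda$ commute with the translation operator $\tau_h$ on $H_{\nu,0}(\mathbb{R};H)$, the translated function $\tau_h u_\lambda$ satisfies the same equation with right-hand side $\tau_h f$. Subtracting the two equations and testing with $u_\lambda-\tau_h u_\lambda$, then taking real parts, the monotonicity of $A_\lambda$ and the $c$-monotonicity $\Re\langle v\mid\partial_{0,\nu}M(\partial_{0,\nu}^{-1})v\rangle\geq c|v|^2$ from \prettyref{lem:maximal monoton material law} together with the Cauchy-Schwarz inequality give
\[
c\,|u_\lambda-\tau_h u_\lambda|^{2}\leq|u_\lambda-\tau_h u_\lambda|\,|f-\tau_h f|,
\]
so $|u_\lambda-\tau_h u_\lambda|\leq c^{-1}|f-\tau_h f|$ uniformly in $\lambda>0$. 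Since $f\in C_c^\infty(\mathbb{R};H)\subseteq\mathcal{D}(\partial_{0,\nu})$, the right-hand side divided by $|h|$ is bounded as $h\to0$. The Fourier-Laplace description \prettyref{eq:spectral_derivative} of $\partial_{0,\nu}$ as the multiplier $\mathrm{i}m+\nu$ then gives the standard characterization that $\sup_{0<|h|\leq 1}|\tau_h u_\lambda-u_\lambda|/|h|<\infty$ implies $u_\lambda\in\mathcal{D}(\partial_{0,\nu})$ with $|\partial_{0,\nu}u_\lambda|\leq c^{-1}|\partial_{0,\nu}f|$. Using \prettyref{lem:core} to commute $\partial_{0,\nu}$ past $M(\partial_{0,\nu}^{-1})$ and rearranging the equation as
\[
A_\lambda u_\lambda=f-M(\partial_{0,\nu}^{-1})\,\partial_{0,\nu}u_\lambda,
\]
the boundedness of $M(\partial_{0,\nu}^{-1})$ delivers the desired uniform estimate $|A_\lambda u_\lambda|\leq|f|+c^{-1}\|M(\partial_{0,\nu}^{-1})\|\,|\partial_{0,\nu}f|$.

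It remains to pass to the limit $\lambda\to0$. The identity above rewritten as $cu_\lambda+(\partial_{0,\nu}M(\partial_{0,\nu}^{-1})-c)u_\lambda+A_\lambda u_\lambda=f$, together with the rescaling $\tilde L\coloneqq c^{-1}(\partial_{0,\nu}M(\partial_{0,\nu}^{-1})-c)$, $\tilde A\coloneqq c^{-1}A$ (both maximal monotone), and the elementary identity $c\,(\tilde A)_\lambda=A_{\lambda/c}$ for Yosida approximations, shows that $u_\lambda=(1+\tilde L+(\tilde A)_{\lambda/c})^{-1}(f/c)$. The bound on $|A_\lambda u_\lambda|$ translates into $\sup_\lambda|(\tilde A)_{\lambda/c}u_\lambda|<\infty$, so \prettyref{prop:pert} applies and yields $u_\lambda\to u$ with $(u,f/c)\in 1+\tilde L+\tilde A$, equivalently $(u,f)\in\partial_{0,\nu}M(\partial_{0,\nu}^{-1})+A$.
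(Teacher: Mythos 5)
Your proposal is correct and follows essentially the same route as the paper's proof: Yosida regularization of $A$, solvability of the regularized problem via \prettyref{lem:maximal monoton material law} and \prettyref{lem:Yosida}, the translation/difference-quotient argument exploiting autonomy to bound $\partial_{0,\nu}u_\lambda$ and hence $A_\lambda u_\lambda$ uniformly, and passage to the limit via \prettyref{prop:pert}. The only (harmless) deviations are cosmetic: you bound the difference quotient by $c^{-1}|\partial_{0,\nu}f|$ where the paper uses the mean-value inequality with $|f'|_\infty\sqrt{\mu_\nu(\supp f)}$, and you spell out the rescaling identity $c(\tilde A)_\mu=A_{\mu/c}$ that the paper leaves implicit.
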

\begin{proof}
We replace $A$ by its Yosida approximation $A_{\lambda}$ for $\lambda>0$,
which is a Lipschitz-continuous monotone mapping on $H_{\nu,0}(\mathbb{R};H).$
Then, using \prettyref{lem:maximal monoton material law} and \prettyref{lem:Yosida}
we find an element $u_{\lambda}\in H_{\nu,0}(\mathbb{R};H)$ such
that 
\[
\left(u_{\lambda},\frac{1}{c}f\right)\in1+\frac{1}{c}\left(\partial_{0,\nu}M\left(\partial_{0,\nu}^{-1}\right)-c+A_{\lambda}\right),
\]
which is equivalent to 
\begin{equation}
(u_{\lambda},f)\in\partial_{0,\nu}M\left(\partial_{0,\nu}^{-1}\right)+A_{\lambda}.\label{eq:appr_problem}
\end{equation}
We prove that $\left(A_{\lambda}\left(u_{\lambda}\right)\right)_{\lambda>0}$
is uniformly bounded. For that purpose let $h>0.$ Then, using that
$A_{\lambda}$ and $\partial_{0,\nu}M\left(\partial_{0,\nu}^{-1}\right)$
commute with the operator $\tau_{h}:H_{\nu,0}(\mathbb{R};H)\to H_{\nu,0}(\mathbb{R};H)$
given by $\left(\tau_{h}v\right)(t)\coloneqq v(t+h)$ for $t\in\mathbb{R},\, v\in H_{\nu,0}(\mathbb{R};H),$
we obtain that 
\[
(\tau_{h}u_{\lambda},\tau_{h}f)\in\partial_{0,\nu}M\left(\partial_{0,\nu}^{-1}\right)+A_{\lambda}.
\]
Since $A_{\lambda}$ is monotone, \prettyref{prop:uniq} yields 
\[
|\tau_{h}u_{\lambda}-u_{\lambda}|_{H_{\nu,0}(\mathbb{R};H)}\leq\frac{1}{c}|\tau_{h}f-f|_{H_{\nu,0}(\mathbb{R};H)}
\]
and hence, using the mean-value inequality 
\[
\left|\frac{1}{h}\left(\tau_{h}u_{\lambda}-u_{\lambda}\right)\right|_{H_{\nu,0}(\mathbb{R};H)}\leq\frac{1}{c}|f'|_{\infty}\sqrt{\mu_{\nu}\left(\supp f\right)}
\]
for each $h>0,$ where $\supp f$ denotes the support of $f$. The
latter gives, by choosing a weak-convergent subsequence of $\left(\frac{1}{h}\left(\tau_{h}u_{\lambda}-u_{\lambda}\right)\right)_{h>0}$,
that $u_{\lambda}\in D(\partial_{0,\nu})$ and that 
\[
|\partial_{0,\nu}u_{\lambda}|_{H_{\nu,0}(\mathbb{R};H)}\leq\frac{1}{c}|f'|_{\infty}\sqrt{\mu_{\nu}\left(\supp f\right).}
\]
Hence, using \prettyref{eq:appr_problem} we obtain 
\begin{align*}
\left|A_{\lambda}(u_{\lambda})\right|_{H_{\nu,0}(\mathbb{R};H)} & =\left|f-\partial_{0,\nu}M\left(\partial_{0,\nu}^{-1}\right)u_{\lambda}\right|_{H_{\nu,0}(\mathbb{R};H)}\\
 & \leq|f|_{H_{\nu,0}(\mathbb{R};H)}+\left\Vert M\left(\partial_{0,\nu}^{-1}\right)\right\Vert \frac{1}{c}|f'|_{\infty}\sqrt{\mu_{\nu}\left(\supp f\right)}
\end{align*}
for each $\lambda>0.$ Thus, \prettyref{prop:pert} applies and hence,
we find an element $u\in H_{\nu,0}(\mathbb{R};H)$ satisfying 
\[
\left(u,\frac{1}{c}f\right)\in1+\frac{1}{c}\left(\partial_{0,\nu}M\left(\partial_{0,\nu}^{-1}\right)-c+A\right),
\]
which gives 
\[
(u,f)\in\partial_{0,\nu}M\left(\partial_{0,\nu}^{-1}\right)+A.\tag*{\qedhere}
\]

\end{proof}
We summarize our findings of this subsection in the following theorem.
\begin{thm}[Well-posedness of evolutionary inclusions]
\label{thm:well_posedness} Let $H$ be a Hilbert space, $M:B_{\mathbb{C}}\left(\frac{1}{2\nu},\frac{1}{2\nu}\right)\to L(H)$
a bounded strongly measurable function for some $\nu>0$ satisfying
(H1) and $A\subseteq H_{\nu,0}(\mathbb{R};H)\oplus H_{\nu,0}(\mathbb{R};H)$
a relation satisfying (H2). Then for each $f\in H_{\nu,0}(\mathbb{R};H)$
there exists a unique $u\in H_{\nu,0}(\mathbb{R};H)$ such that 
\[
(u,f)\in\overline{\partial_{0,\nu}M\left(\partial_{0,\nu}^{-1}\right)+A}.
\]
Moreover, $\left(\overline{\partial_{0,\nu}M\left(\partial_{0,\nu}^{-1}\right)+A}\right)^{-1}$
is Lipschitz-continuous with a Lipschitz constant less than or equal
to $\frac{1}{c}.$ 
\end{thm}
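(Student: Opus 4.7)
The strategy is to combine the Lipschitz estimate of \prettyref{prop:uniq} with the existence result of \prettyref{prop:existence} by a density argument, using the outer closure on the right-hand side precisely to accommodate those limit pairs $(u,f)$ that may escape the unclosed sum $\partial_{0,\nu}M(\partial_{0,\nu}^{-1})+A$.

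First I would treat uniqueness and the Lipschitz bound together. \prettyref{prop:uniq} applied with $B=A$ gives $|u-v|_{H_{\nu,0}(\mathbb{R};H)}\leq \tfrac{1}{c}|f-g|_{H_{\nu,0}(\mathbb{R};H)}$ for all $(u,f),(v,g)\in\partial_{0,\nu}M(\partial_{0,\nu}^{-1})+A$. This inequality is a closed condition on pairs $(u,f),(v,g)\in H_{\nu,0}(\mathbb{R};H)\oplus H_{\nu,0}(\mathbb{R};H)$; approximating elements of the closure by elements of the sum and passing to the limit therefore yields the same estimate for $(u,f),(v,g)\in\overline{\partial_{0,\nu}M(\partial_{0,\nu}^{-1})+A}$. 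In particular, for fixed $f$ the element $u$ is unique, and the inverse relation has Lipschitz constant at most $1/c$, in particular it is single-valued on its domain.

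Next, for existence on arbitrary right-hand sides, I would invoke \prettyref{prop:existence} on the dense subspace $C_{c}^{\infty}(\mathbb{R};H)\subseteq H_{\nu,0}(\mathbb{R};H)$ (density being standard, via the unitary $\exp(-\nu m)\colon H_{\nu,0}(\mathbb{R};H)\to L_{2}(\mathbb{R};H)$). Given $f\in H_{\nu,0}(\mathbb{R};H)$, choose $f_{n}\in C_{c}^{\infty}(\mathbb{R};H)$ with $f_{n}\to f$ and let $u_{n}\in H_{\nu,0}(\mathbb{R};H)$ be the solutions produced by \prettyref{prop:existence}, so that $(u_{n},f_{n})\in\partial_{0,\nu}M(\partial_{0,\nu}^{-1})+A$. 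Applying the Lipschitz estimate pairwise to the sequence $(u_{n},f_{n})$ shows that $(u_{n})_{n}$ is a Cauchy sequence, hence converges to some $u\in H_{\nu,0}(\mathbb{R};H)$; the pair $(u,f)$ then lies in $\overline{\partial_{0,\nu}M(\partial_{0,\nu}^{-1})+A}$ by the very definition of the closure.

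The argument is essentially a routine glueing. The only point that deserves any care is verifying that the estimate of \prettyref{prop:uniq} really transfers to the closure and that the limiting pair $(u,f)$ indeed belongs to it; both are immediate once one observes that the closure is taken in $H_{\nu,0}(\mathbb{R};H)\oplus H_{\nu,0}(\mathbb{R};H)$, on which the norm is continuous. Together, the existence on a dense set plus the Lipschitz bound on the closure furnish exactly the three Hadamard requirements: existence, uniqueness and continuous dependence with the claimed constant $1/c$.
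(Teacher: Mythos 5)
Your proposal is correct and follows exactly the route the paper intends: the paper states the theorem as a summary of \prettyref{prop:uniq} and \prettyref{prop:existence}, leaving implicit precisely the density-plus-Lipschitz glueing (passing the estimate to the closure, Cauchy sequences of approximate solutions) that you spell out. Nothing is missing.
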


\subsection{Causality}

In this subsection we prove the causality of the solution operator
$\left(\overline{\partial_{0,\nu}M\left(\partial_{0,\nu}^{-1}\right)+A}\right)^{-1}$
of an inclusion of the form \prettyref{eq:evol}. First we give an
equivalent condition for the causality of the operator $\left(\partial_{0,\nu}M\left(\partial_{0,\nu}^{-1}\right)\right)^{-1}$,
which will enable us to prove the causality for the case of non-vanishing
$A.$ The statement was already given in \cite[Proposition 2.65]{Trostorff_2011}
in a slightly more general version. However, for sake of completeness
we present the proof.
\begin{lem}
\label{lem:eq_causality}Let $\nu>0$ and $M:B_{\mathbb{C}}\left(\frac{1}{2\nu},\frac{1}{2\nu}\right)\to L(H)$
be a strongly measurable bounded mapping satisfying the hypothesis
(H1). Then the following statements are equivalent:

\begin{enumerate}[(i)]

\item the operator $\left(\partial_{0,\nu}M\left(\partial_{0,\nu}^{-1}\right)\right)^{-1}:H_{\nu,0}(\mathbb{R};H)\to H_{\nu,0}(\mathbb{R};H)$
is causal,

\item for every $u\in\mathcal{D}\left(\partial_{0,\nu}M\left(\partial_{0,\nu}^{-1}\right)\right)$
we have that 
\[
\intop_{-\infty}^{0}\Re\left\langle \left.\partial_{0,\nu}M\left(\partial_{0,\nu}^{-1}\right)u(t)\right|u(t)\right\rangle e^{-2\nu t}\,\mathrm{d}t\geq c\intop_{-\infty}^{0}\langle u(t)|u(t)\rangle e^{-2\nu t}\,\mathrm{d}t.
\]

\end{enumerate}\end{lem}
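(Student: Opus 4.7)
The plan is to derive both directions from the single fact, contained in Lemma~2.8, that $\partial_{0,\nu}M(\partial_{0,\nu}^{-1})-c$ is maximal monotone. Rewriting this as the full-line estimate
\[
\int_{\mathbb{R}}\Re\langle \partial_{0,\nu}M(\partial_{0,\nu}^{-1})w(t)|w(t)\rangle e^{-2\nu t}\,\mathrm{d}t \;\geq\; c\int_{\mathbb{R}}|w(t)|^{2} e^{-2\nu t}\,\mathrm{d}t \qquad (w\in\mathcal{D}(\partial_{0,\nu}M(\partial_{0,\nu}^{-1})))
\]
will be the workhorse; bounded invertibility of $\partial_{0,\nu}M(\partial_{0,\nu}^{-1})$ on $H_{\nu,0}(\mathbb{R};H)$ is also granted by that lemma (the adjoint is strictly monotone, hence the operator has dense range and is closed and boundedly invertible).

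For (ii)$\Rightarrow$(i) let $f\in H_{\nu,0}(\mathbb{R};H)$ and set $u\coloneqq(\partial_{0,\nu}M(\partial_{0,\nu}^{-1}))^{-1}f$. Causality of a bounded linear operator is equivalent to the implication: $f|_{(-\infty,a]}=0$ forces $u|_{(-\infty,a]}=0$ for every $a\in\mathbb{R}$. Since $\partial_{0,\nu}$ and $M(\partial_{0,\nu}^{-1})$ commute with the translations $\tau_{h}$, so does the inverse operator, and it suffices to treat $a=0$. But if $f$ vanishes on $(-\infty,0]$, then the integrand on the left-hand side of (ii) vanishes identically there, so (ii) forces $\int_{-\infty}^{0}|u(t)|^{2}e^{-2\nu t}\,\mathrm{d}t=0$, i.e.\ $u|_{(-\infty,0]}=0$.

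For (i)$\Rightarrow$(ii) fix $u\in\mathcal{D}(\partial_{0,\nu}M(\partial_{0,\nu}^{-1}))$ and write $v\coloneqq\partial_{0,\nu}M(\partial_{0,\nu}^{-1})u$. Split $v=v_{-}+v_{+}$ with $v_{-}\coloneqq\chi_{(-\infty,0]}(m)v$, and define $u_{\pm}\coloneqq(\partial_{0,\nu}M(\partial_{0,\nu}^{-1}))^{-1}v_{\pm}$, so that $u=u_{-}+u_{+}$. Since $v_{+}$ vanishes on $(-\infty,0]$, causality (i) yields $u_{+}|_{(-\infty,0]}=0$, so that $u=u_{-}$ on $(-\infty,0]$. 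Applying the full-line estimate above to $w=u_{-}$, and using that $v_{-}$ is supported in $(-\infty,0]$, gives
\[
\int_{-\infty}^{0}\Re\langle v(t)|u_{-}(t)\rangle e^{-2\nu t}\,\mathrm{d}t \;=\; \int_{\mathbb{R}}\Re\langle v_{-}(t)|u_{-}(t)\rangle e^{-2\nu t}\,\mathrm{d}t \;\geq\; c\int_{\mathbb{R}}|u_{-}(t)|^{2}e^{-2\nu t}\,\mathrm{d}t.
\]
Replacing $u_{-}$ by $u$ on $(-\infty,0]$ on the left-hand side, and discarding the non-negative contribution over $(0,\infty)$ in the integral on the right, delivers exactly (ii).

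The main obstacle is the direction (i)$\Rightarrow$(ii): Lemma~2.8 only delivers a full-line monotonicity inequality, while (ii) is a strictly half-line statement, and this mismatch is what the decomposition $v=v_{-}+v_{+}$ is designed to bridge. Causality is used exactly once, to ensure that the future part $u_{+}$ contributes nothing to the past integral of $u$; without this one could not identify $u$ with $u_{-}$ on $(-\infty,0]$ and the half-line integral of $v$ against $u$ would not control $|u|^{2}$ there.
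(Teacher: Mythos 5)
Your proof is correct and takes essentially the same route as the paper's: the direction (ii)$\Rightarrow$(i) is identical, and in (i)$\Rightarrow$(ii) your decomposition $v=v_{-}+v_{+}$ with $u_{-}=\left(\partial_{0,\nu}M\left(\partial_{0,\nu}^{-1}\right)\right)^{-1}v_{-}$ is a repackaging of the paper's cut-off computation, which applies the same global estimate $\Re\langle Tw|w\rangle\geq c|w|^{2}$ to exactly this element $w=u_{-}$ and invokes causality in the equivalent form $\chi_{(-\infty,0]}(m)T^{-1}v=\chi_{(-\infty,0]}(m)T^{-1}\left(\chi_{(-\infty,0]}(m)v\right)$.
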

\begin{proof}
Assume that $\left(\partial_{0,\nu}M\left(\partial_{0,\nu}^{-1}\right)\right)^{-1}$
is causal and let $u\in\mathcal{D}\left(\partial_{0,\nu}M\left(\partial_{0,\nu}^{-1}\right)\right).$
We set $v\coloneqq\partial_{0,\nu}M\left(\partial_{0,\nu}^{-1}\right)u$
and obtain 
\begin{align*}
 & \intop_{-\infty}^{0}\Re\left\langle \left.\partial_{0,\nu}M\left(\partial_{0,\nu}^{-1}\right)u(t)\right|u(t)\right\rangle e^{-2\nu t}\mbox{ d}t\\
 & =\intop_{-\infty}^{0}\Re\left\langle v(t)\left|\left(\left(\partial_{0,\nu}M\left(\partial_{0,\nu}^{-1}\right)\right)^{-1}v\right)(t)\right.\right\rangle e^{-2\nu t}\mbox{ d}t\\
 & =\Re\left\langle v\left|\chi_{(-\infty,0]}(m)\left(\partial_{0,\nu}M\left(\partial_{0,\nu}^{-1}\right)\right)^{-1}v\right.\right\rangle _{H_{\nu,0}(\mathbb{R};H)}\\
 & =\Re\left\langle v\left|\chi_{(-\infty,0]}(m)\left(\partial_{0,\nu}M\left(\partial_{0,\nu}^{-1}\right)\right)^{-1}\left(\chi_{(-\infty,0]}(m)v\right)\right.\right\rangle _{H_{\nu,0}(\mathbb{R};H)}\\
 & =\Re\left\langle \chi_{(-\infty,0]}(m)v\left|\left(\partial_{0,\nu}M\left(\partial_{0,\nu}^{-1}\right)\right)^{-1}\left(\chi_{(-\infty,0]}(m)v\right)\right.\right\rangle _{H_{\nu,0}(\mathbb{R};H)}.
\end{align*}
Using \prettyref{lem:maximal monoton material law} we get that 
\begin{align*}
 & \intop_{-\infty}^{0}\Re\left\langle \left.\partial_{0,\nu}M\left(\partial_{0,\nu}^{-1}\right)u(t)\right|u(t)\right\rangle e^{-2\nu t}\mbox{ d}t\\
 & =\Re\left\langle \chi_{(-\infty,0]}(m)v\left|\left(\partial_{0,\nu}M\left(\partial_{0,\nu}^{-1}\right)\right)^{-1}\left(\chi_{(-\infty,0]}(m)v\right)\right.\right\rangle _{H_{\nu,0}(\mathbb{R};H)}\\
 & \geq c\left|\left(\partial_{0,\nu}M\left(\partial_{0,\nu}^{-1}\right)\right)^{-1}\left(\chi_{(-\infty,0]}(m)v\right)\right|_{H_{\nu,0}(\mathbb{R};H)}^{2}\\
 & \geq c\left|\chi_{(-\infty,0]}(m)\left(\partial_{0,\nu}M\left(\partial_{0,\nu}^{-1}\right)\right)^{-1}\left(\chi_{(-\infty,0]}(m)v\right)\right|_{H_{\nu,0}(\mathbb{R};H)}^{2}\\
 & =c\left|\chi_{(-\infty,0]}(m)\left(\partial_{0,\nu}M\left(\partial_{0,\nu}^{-1}\right)\right)^{-1}v\right|_{H_{\nu,0}(\mathbb{R};H)}^{2}\\
 & =c\intop_{-\infty}^{0}|u(t)|^{2}e^{-2\nu t}\mbox{ d}t.
\end{align*}
Assume now that (ii) holds. Using that $\partial_{0,\nu}M(\partial_{0,\nu}^{-1})$
is translation invariant, i.e. it commutes with the translation operator
$\tau_{h}$ for each $h\in\mathbb{R},$ the asserted inequality implies
\begin{equation}
\intop_{-\infty}^{a}\Re\left\langle \left.\partial_{0,\nu}M\left(\partial_{0,\nu}^{-1}\right)u(t)\right|u(t)\right\rangle e^{-2\nu t}\mbox{ d}t\geq c\intop_{-\infty}^{a}|u(t)|^{2}e^{-2\nu t}\mbox{ d}t\label{eq:caus}
\end{equation}
for each $a\in\mathbb{R},u\in\mathcal{D}\left(\partial_{0,\nu}M\left(\partial_{0,\nu}^{-1}\right)\right).$
Due to the linearity of $\left(\partial_{0,\nu}M\left(\partial_{0,\nu}^{-1}\right)\right)^{-1}$
it suffices to prove that $\chi_{(-\infty,a]}(m)f=0$ implies $\chi_{(-\infty,a]}(m)\left(\partial_{0,\nu}M\left(\partial_{0,\nu}^{-1}\right)\right)^{-1}f=0$
for each $f\in H_{\nu,0}(\mathbb{R};H).$ So let $f\in H_{\nu,0}(\mathbb{R};H)$
with $\chi_{(-\infty,a]}(m)f=0$ for some $a\in\mathbb{R}$ and define
$u\coloneqq\left(\partial_{0,\nu}M\left(\partial_{0,\nu}^{-1}\right)\right)^{-1}f.$
Then \prettyref{eq:caus} gives 
\[
c\intop_{-\infty}^{a}|u(t)|^{2}e^{-2\nu t}\mbox{ d}t\leq\intop_{-\infty}^{a}\Re\left\langle f(t)|u(t)\right\rangle e^{-2\nu t}\mbox{ d}t=0,
\]
which implies $\chi_{(-\infty,a]}(m)u=0.$\end{proof}
\begin{rem}
If $M:B_{\mathbb{C}}\left(\frac{1}{2\nu},\frac{1}{2\nu}\right)\to L(H)$
is a linear material law and satisfies the hypothesis (H1), then \prettyref{lem:eq_causality}
gives that \prettyref{eq:caus} holds for every $u\in\mathcal{D}\left(\partial_{0,\nu}M\left(\partial_{0,\nu}^{-1}\right)\right)$
and every $a\in\mathbb{R},$ since the assumed analyticity for $M$
yields the causality of $\left(\partial_{0,\nu}M\left(\partial_{0,\nu}^{-1}\right)\right)^{-1}$
(see \cite[Lemma 2.5]{Picard}).
\end{rem}
Now we are able to show the causality of the solution operator associated
to the evolutionary inclusion \prettyref{eq:evol}.
\begin{prop}[Causality of evolutionary inclusions]
\label{prop:causality} Let $H$ be a Hilbert space, $\nu>0$ and
$M:B_{\mathbb{C}}\left(\frac{1}{2\nu},\frac{1}{2\nu}\right)\to L(H)$
a linear material law, satisfying hypothesis (H1). Furthermore let
$A\subseteq H_{\nu,0}(\mathbb{R};H)\oplus H_{\nu,0}(\mathbb{R};H)$
be a binary relation satisfying (H2) and (H3). Then, the solution
operator 
\[
\left(\overline{\partial_{0,\nu}M\left(\partial_{0,\nu}^{-1}\right)+A}\right)^{-1}:H_{\nu,0}(\mathbb{R};H)\to H_{\nu,0}(\mathbb{R};H),
\]

which exists according to \prettyref{thm:well_posedness}, is causal. \end{prop}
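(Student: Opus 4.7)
The plan is to reduce causality of the solution operator to the localised coercivity estimate
\begin{equation}
\int_{-\infty}^{a}\Re\langle f(t)-g(t)|u(t)-v(t)\rangle e^{-2\nu t}\,\mathrm{d}t\geq c\int_{-\infty}^{a}|u(t)-v(t)|^{2}e^{-2\nu t}\,\mathrm{d}t,\label{eq:plan-est}
\end{equation}
valid for every $a\in\mathbb{R}$ and every $(u,f),(v,g)\in\overline{\partial_{0,\nu}M(\partial_{0,\nu}^{-1})+A}$. Once \prettyref{eq:plan-est} is in hand, the Cauchy--Schwarz inequality applied to its left-hand side shows that $\chi_{(-\infty,a]}(m)(f-g)=0$ forces $\chi_{(-\infty,a]}(m)(u-v)=0$; this is exactly the causality of the (single-valued, by \prettyref{thm:well_posedness}) solution operator.

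To establish \prettyref{eq:plan-est} I first verify it for pairs in the algebraic sum $\partial_{0,\nu}M(\partial_{0,\nu}^{-1})+A$. For such a pair there exist $q,r\in H_{\nu,0}(\mathbb{R};H)$ with $(u,q),(v,r)\in A$, $u,v\in\mathcal{D}(\partial_{0,\nu}M(\partial_{0,\nu}^{-1}))$, $f=\partial_{0,\nu}M(\partial_{0,\nu}^{-1})u+q$ and $g=\partial_{0,\nu}M(\partial_{0,\nu}^{-1})v+r$. Subtracting and pairing with $u-v$ over $(-\infty,a]$ in the $H_{\nu,0}$-inner product splits the right-hand side of $\int_{-\infty}^{a}\Re\langle f-g|u-v\rangle e^{-2\nu t}\,\mathrm{d}t$ into two contributions. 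For the material-law part I apply \prettyref{lem:eq_causality} to $u-v$; although the lemma is phrased at $a=0$, the translation invariance of $\partial_{0,\nu}M(\partial_{0,\nu}^{-1})$ combined with the substitution $u-v\mapsto\tau_{-a}(u-v)$ (the very mechanism used inside the proof of \prettyref{lem:eq_causality}) promotes the bound to arbitrary $a$. For the monotone part, the autonomy (H2) of $A$ lets me replace $(u,q),(v,r)$ by their translates $(\tau_{-a}u,\tau_{-a}q),(\tau_{-a}v,\tau_{-a}r)\in A$; (H3) then yields non-negativity of the integral on $(-\infty,0]$ in the translated variable, which after a change of variable is the non-negativity of $\int_{-\infty}^{a}\Re\langle u-v|q-r\rangle e^{-2\nu t}\,\mathrm{d}t$. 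Summing the two pieces produces \prettyref{eq:plan-est} on the algebraic sum.

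To transfer the estimate to the closure, pick sequences $(u_{n},f_{n}),(v_{n},g_{n})\in\partial_{0,\nu}M(\partial_{0,\nu}^{-1})+A$ with $u_{n}\to u$, $v_{n}\to v$, $f_{n}\to f$, $g_{n}\to g$ in $H_{\nu,0}(\mathbb{R};H)$. Rewriting both sides of \prettyref{eq:plan-est} for the approximants as the $H_{\nu,0}$-inner product of $\chi_{(-\infty,a]}(m)(f_{n}-g_{n})$ with $\chi_{(-\infty,a]}(m)(u_{n}-v_{n})$ and as the squared $H_{\nu,0}$-norm of the latter, the boundedness of the projector $\chi_{(-\infty,a]}(m)$ together with the joint continuity of the inner product and the continuity of the norm allow the passage to the limit, yielding \prettyref{eq:plan-est} for all pairs in the closure.

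The step I expect to require the most care is the combined translation bookkeeping: autonomy of $A$, translation invariance of the linear material law, and the half-line forms of \prettyref{lem:eq_causality} and of (H3) are all anchored at $t\leq 0$, and one has to stitch them into their counterparts at $t\leq a$ for arbitrary $a\in\mathbb{R}$ without disturbing the sign structure of the two contributions. Once this bookkeeping is in place, the closure extension and the final Cauchy--Schwarz reading of \prettyref{eq:plan-est} are routine.
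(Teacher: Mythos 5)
Your proposal is correct and follows essentially the same route as the paper's proof: the same local coercivity estimate, obtained by combining \prettyref{lem:eq_causality} for the material-law part with (H3) for the monotone part, is established on the algebraic sum and transferred to the closure by approximation, and causality is then read off via Cauchy--Schwarz. The only differences are organizational: you keep general $a$ where the paper first reduces to $a=0$ by translation invariance, and you approximate using the definition of the closure where the paper takes $C_{c}^{\infty}$ right-hand sides via \prettyref{prop:existence} together with the Lipschitz continuity of the solution operator.
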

\begin{proof}
First note that due to the translation invariance of $A$ and $\partial_{0,\nu}M\left(\partial_{0,\nu}^{-1}\right)$
it suffices to check that for $f,g\in H_{\nu,0}(\mathbb{R};H)$ satisfying
$\chi_{(-\infty,0]}(m)(f-g)=0$ we have that 
\[
\chi_{(-\infty,0]}(m)\left(\left(\overline{\partial_{0,\nu}M\left(\partial_{0,\nu}^{-1}\right)+A}\right)^{-1}(f)-\left(\overline{\partial_{0,\nu}M\left(\partial_{0,\nu}^{-1}\right)+A}\right)^{-1}(g)\right)=0.
\]
So let $f,g\in H_{\nu,0}(\mathbb{R};H)$ with $\chi_{(-\infty,0]}(m)(f-g)=0.$
We choose sequences $(f_{n})_{n\in\mathbb{N}}$ and $(g_{n})_{n\in\mathbb{N}}$
in $C_{c}^{\infty}(\mathbb{R};H)\subseteq\left(\partial_{0,\nu}M\left(\partial_{0,\nu}^{-1}\right)+A\right)[H_{\nu,0}(\mathbb{R};H)]$
(see \prettyref{prop:existence}) such that $f_{n}\to f$ and $g_{n}\to g$
in $H_{\nu,0}(\mathbb{R};H)$ as $n\to\infty.$ For $n\in\mathbb{N}$
we define 
\begin{align*}
u_{n} & \coloneqq\left(\partial_{0,\nu}M\left(\partial_{0,\nu}^{-1}\right)+A\right)^{-1}(f_{n}),\\
v_{n} & \coloneqq\left(\partial_{0,\nu}M\left(\partial_{0,\nu}^{-1}\right)+A\right)^{-1}(g_{n}),
\end{align*}
and due to the continuity of $\left(\overline{\partial_{0,\nu}M\left(\partial_{0,\nu}^{-1}\right)+A}\right)^{-1}$
we get that $u_{n}\to\left(\overline{\partial_{0,\nu}M\left(\partial_{0,\nu}^{-1}\right)+A}\right)^{-1}(f)$
as well as $v_{n}\to\left(\overline{\partial_{0,\nu}M\left(\partial_{0,\nu}^{-1}\right)+A}\right)^{-1}(g)$
in $H_{\nu,0}(\mathbb{R};H).$ We estimate, using \prettyref{lem:eq_causality}
and (H3) 
\[
\intop_{-\infty}^{0}\Re\langle u_{n}(t)-v_{n}(t)|f_{n}(t)-g_{n}(t)\rangle e^{-2\nu t}\mbox{ d}t\geq c\intop_{-\infty}^{0}|u_{n}(t)-v_{n}(t)|^{2}e^{-2\nu t}\mbox{ d}t
\]
for each $n\in\mathbb{N}.$ Passing to the limit as $n\to\infty$
gives 
\begin{align*}
\intop_{-\infty}^{0}\left|\left(\overline{\partial_{0,\nu}M\left(\partial_{0,\nu}^{-1}\right)+A}\right)^{-1}(f)-\left(\overline{\partial_{0,\nu}M\left(\partial_{0,\nu}^{-1}\right)+A}\right)^{-1}(g)\right|^{2}e^{-2\nu t}\mbox{ d}t & \leq0,
\end{align*}
which yields the causality. 
\end{proof}

\section{Nonlinear boundary conditions}

In this subsection we study a class of evolutionary equations, which
covers evolutionary problems with nonlinear boundary conditions, modelled
by an inclusion on the boundary. Those equations occur for instance
in the study of problems in elasticity with frictional boundary conditions,
where the behaviour on the boundary is described by variational inequalities
(cf. \cite{do1985inequality,Sofonea2009,naniewicz_1989}) or by suitable
differential inclusions (cf. \cite{Migorski_2009,Migorski_2010,Migorski_2010_cylinder}).
Moreover, since we consider operators acting in time and space, also
problems with boundary conditions given by ordinary differential equations,
delay equations or inclusions can be treated within our framework.\\
Throughout let $H_{0}$ and $H_{1}$ be Hilbert spaces and $G_{c}\subseteq H_{0}\oplus H_{1}$
and $D_{c}\subseteq H_{1}\oplus H_{0}$ two densely defined closed
linear operators. Moreover assume that $G_{c}$ and $D_{c}$ are formally
skew-adjoint, i.e. 
\begin{align*}
G_{c} & \subseteq-(D_{c})^{\ast}\eqqcolon G,\\
D_{c} & \subseteq-(G_{c})^{\ast}\eqqcolon D.
\end{align*}

We do not distinguish notationally between the operators $G_{c},D_{c},G$
and $D$ and their canonical extensions to $H_{\nu,0}(\mathbb{R};H_{0})$
and $H_{\nu,0}(\mathbb{R};H_{1}),$ respectively. Recall from Subsection
2.3 that 
\begin{align*}
\mathcal{BD}(G)\coloneqq\mathcal{D}(G_{c})^{\bot} & =[\{0\}](1-DG),\\
\mathcal{BD}(D)\coloneqq\mathcal{D}(D_{c})^{\bot} & =[\{0\}](1-GD),
\end{align*}
where the orthocomplements are taken with respect to the inner products
in $\mathcal{D}(G)$ and $\mathcal{D}(D),$ respectively. We consider
the following operator $A:\mathcal{D}(A)\subseteq H_{\nu,0}(\mathbb{R};H_{0}\oplus H_{1})\to H_{\nu,0}(\mathbb{R};H_{0}\oplus H_{1})$
given by 
\begin{align}
\mathcal{D}(A) & \coloneqq\left\{ (u,v)\in H_{\nu,0}\left(\mathbb{R};\mathcal{D}(G)\oplus\mathcal{D}(D)\right)\left|\left(\pi_{\mathcal{BD}(G)}u,\Bullet D\pi_{\mathcal{BD}(D)}v\right)\in h\right.\right\} ,\label{eq:A}\\
A\left(\begin{array}{c}
u\\
v
\end{array}\right) & \coloneqq\left(\begin{array}{cc}
0 & D\\
G & 0
\end{array}\right)\left(\begin{array}{c}
u\\
v
\end{array}\right).\nonumber 
\end{align}

Here $h\subseteq H_{\nu,0}\left(\mathbb{R};\mathcal{BD}(G)\right)\oplus H_{\nu,0}\left(\mathbb{R};\mathcal{BD}(G)\right)$
is a binary relation. We will show that under appropriate assumptions
on $h,$ the operator $A$ satisfies the hypotheses (H2) and (H3)
and thus \prettyref{thm:well_posedness} and \prettyref{prop:causality}
are applicable. The first subsection is devoted to the proof of the
following theorem.
\begin{thm}
\label{thm:A_hypotheses}If $h$ satisfies the hypotheses (H2) or
(H3), then so does $A.$ 
\end{thm}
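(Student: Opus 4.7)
My plan is to reduce both hypotheses for $A$ to the corresponding ones for $h$ via a single pointwise boundary-form identity. For $(u,v),(x,y)\in\mathcal{D}(A)$, setting $a\coloneqq u-x$ and $b\coloneqq v-y$, I would first establish
\begin{multline*}
\Re\langle a(t)\mid Db(t)\rangle_{H_{0}}+\Re\langle Ga(t)\mid b(t)\rangle_{H_{1}}\\
=2\Re\langle\pi_{\mathcal{BD}(G)}a(t)\mid\Bullet D\,\pi_{\mathcal{BD}(D)}b(t)\rangle_{H_{0}}
\end{multline*}
for almost every $t\in\mathbb{R}$. The proof decomposes $a$ and $b$ orthogonally via $\mathcal{D}(G)=\mathcal{D}(G_{c})\oplus\mathcal{BD}(G)$ and $\mathcal{D}(D)=\mathcal{D}(D_{c})\oplus\mathcal{BD}(D)$, applies the formal skew-adjointness $\langle G\phi\mid\psi\rangle=-\langle\phi\mid D\psi\rangle$ whenever $\phi\in\mathcal{D}(G_{c})$ or $\psi\in\mathcal{D}(D_{c})$ so that all mixed terms cancel, and collapses the two surviving boundary contributions into one through $(\Bullet G)^{\ast}=\Bullet D$.

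Given the identity, monotonicity of $A$ and the estimate in (H3) for $A$ follow at once. The constraint defining $\mathcal{D}(A)$ forces $(\pi_{\mathcal{BD}(G)}u,\Bullet D\,\pi_{\mathcal{BD}(D)}v)\in h$ and likewise for $(x,y)$, so integrating the pointwise identity against $e^{-2\nu t}$ over all of $\mathbb{R}$ transfers monotonicity of $h$ to $A$, while restricting the integration to $(-\infty,0]$ transfers (H3) from $h$ to $A$. Autonomy of $A$ is immediate, because $G$, $D$ and the projections $\pi_{\mathcal{BD}(G)}$, $\pi_{\mathcal{BD}(D)}$ all commute with the temporal translation $\tau_{h}$, so the constraint defining $\mathcal{D}(A)$ is $\tau_{h}$-stable whenever $h$ is.

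What remains, and is the genuine obstacle, is the range condition $(1+A)[\mathcal{D}(A)]=H_{\nu,0}(\mathbb{R};H_{0}\oplus H_{1})$ needed to finish (H2). My plan is to replace $h$ by its Yosida map $h_{\lambda}$ (Lipschitz monotone) to obtain an auxiliary operator $A_{\lambda}$ and to solve $(1+A_{\lambda})(u,v)=(f,g)$ by parameterizing over the boundary component $u_{b}\coloneqq\pi_{\mathcal{BD}(G)}u$. Treating $u_{b}$ as a parameter, the bulk equations read $(1+\mathcal{A}_{0})(u_{c},v)=(f-u_{b},g-\Bullet G u_{b})$ with $\mathcal{A}_{0}=\begin{pmatrix}0 & D\\G_{c} & 0\end{pmatrix}$ on $\mathcal{D}(G_{c})\oplus\mathcal{D}(D)$; a direct adjoint computation using $D^{\ast}=-G_{c}$ and $G_{c}^{\ast}=-D$ shows that $\mathcal{A}_{0}$ is skew-selfadjoint, so $(1+\mathcal{A}_{0})^{-1}$ exists as a contraction and uniquely resolves the bulk. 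Inserting the resulting $v$ into the boundary condition $\Bullet D\,\pi_{\mathcal{BD}(D)}v=h_{\lambda}(u_{b})$ produces a fixed-point equation for $u_{b}$ of the form \emph{monotone linear plus Lipschitz monotone}, solvable by \prettyref{lem:Yosida}, which yields maximality of $A_{\lambda}$. Letting $\lambda\to 0$ and invoking \prettyref{prop:pert} then delivers maximality of $A$, once the uniform bound $\sup_{\lambda>0}|h_{\lambda}(u_{b,\lambda})|<\infty$ has been secured by adapting the a priori argument underlying \prettyref{prop:existence}.

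The hardest step I foresee is checking that the reduced boundary equation has the structure required by \prettyref{lem:Yosida}: one must show that the linear operator built from $(1+\mathcal{A}_{0})^{-1}$ composed with the boundary projections is monotone on $H_{\nu,0}(\mathbb{R};\mathcal{BD}(G))$, and I would tackle this by reapplying the same boundary-form identity at the level of $\mathcal{A}_{0}$.
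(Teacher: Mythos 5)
Your boundary-form identity is not correct as stated, and the error is not cosmetic. The two surviving terms after the cancellations are $\Re\langle a_b\mid Db_b\rangle_{H_0}$ and $\Re\langle Ga_b\mid GDb_b\rangle_{H_1}$ (with $a_b=\pi_{\mathcal{BD}(G)}a$, $b_b=P_{\mathcal{BD}(D)}b$, using $GD=1$ on $\mathcal{BD}(D)$); these are \emph{not} equal to one another in general, so they do not collapse to $2\Re\langle a_b\mid Db_b\rangle_{H_0}$. What they do collapse to is the single graph inner product $\Re\langle \pi_{\mathcal{BD}(G)}a\mid\Bullet D\pi_{\mathcal{BD}(D)}b\rangle_{\mathcal{BD}(G)}$, which is what \prettyref{lem:monotone} states. (A one-dimensional example with $G=D=\frac{d}{dx}$ on $(0,1)$, where $\mathcal{BD}(G)=\lspan\{e^{x},e^{-x}\}$, shows $\langle Gx\mid Gw\rangle\neq\langle x\mid w\rangle$ on $\mathcal{BD}(G)$.) This matters because $h$ lives in $H_{\nu,0}(\mathbb{R};\mathcal{BD}(G))\oplus H_{\nu,0}(\mathbb{R};\mathcal{BD}(G))$ and its monotonicity/(H3) are with respect to the graph inner product of $\mathcal{BD}(G)$; with your right-hand side ($H_0$ inner product, factor $2$) the transfer of monotonicity and of (H3) from $h$ to $A$ does not go through. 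The cancellation scheme you describe is the right one; only the bookkeeping of the surviving terms is wrong, and once corrected this half of the argument coincides with the paper's.

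The maximality argument is where the real gap sits, and it is exactly the step you flag as unverified. Your reduced boundary equation has the form $(L+h_{\lambda})(u_b)=\Bullet D\pi_{\mathcal{BD}(D)}v_0$ with $L=\Bullet D\pi_{\mathcal{BD}(D)}\Pi_2(1+\mathcal{A}_0)^{-1}(\,\cdot\,,G\,\cdot\,)$; but \prettyref{lem:Yosida} only yields maximal monotonicity of a sum, i.e.\ surjectivity of $1+(\text{sum})$, so you need to know that $L=1+M$ with $M$ monotone before anything is ``solvable by \prettyref{lem:Yosida}'' --- and establishing that is precisely the content of the computation you defer. (In fact, since $(1+\mathcal{A}_0)^{-1}(u_b,Gu_b)=(0,Gu_b)$ for $u_b\in\mathcal{BD}(G)$, one finds $L=\Bullet D\Bullet G=1$ exactly; the equation is then $(1+h)(u_b)\ni\Bullet D\pi_{\mathcal{BD}(D)}v_0$ and is solved directly by Minty applied to $h$, so the entire Yosida/limit scaffolding is unnecessary. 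This is essentially what the paper's explicit construction in \prettyref{prop:A_max_mon} verifies.) As your plan stands, two further steps would not survive scrutiny: \prettyref{prop:pert} cannot be ``invoked'' because $h_{\lambda}$ is not the Yosida approximation of a relation on the ambient space $H_{\nu,0}(\mathbb{R};H_0\oplus H_1)$, so you would have to redo the limit $\lambda\to0$ by hand (demi-closedness of $h$ plus a bound on $h_{\lambda}(u_{b,\lambda})$); and the proposed uniform bound ``by adapting the a priori argument underlying \prettyref{prop:existence}'' has no traction here, since that argument rests on time-translation invariance and temporal smoothness of the data, whereas $(1+A)(u,v)=(f,g)$ is a purely spatial resolvent equation. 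You should also record somewhere that $A$ is closed when $h$ is (the paper's \prettyref{lem:closed}); your scheme can avoid it by solving for arbitrary $(f,g)$, but only after the identification $L=1$ has been made.
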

In Subsection 4.2 we will illustrate the applicability of the abstract
class of evolutionary inclusions of the form \prettyref{eq:evol},
where the relation $A$ is an operator of the form given in \prettyref{eq:A}.

\subsection{The proof of Theorem 4.1.}

We begin to show the monotonicity of $A$ under the assumption that
$h$ is monotone. Indeed, we will show an equality, which will imply,
despite the monotonicity of $A$, that $A$ satisfies (H3) if $h$
does.
\selectlanguage{english}%
\begin{lem}
\label{lem:monotone}\foreignlanguage{american}{Let $(u,v),(x,y)\in\mathcal{D}(A).$
Then for each $a\in\mathbb{R}$ the following equality holds: 
\begin{align*}
\Re\intop_{-\infty}^{a}\left\langle \left.A\left(\begin{array}{c}
u\\
v
\end{array}\right)(t)-A\left(\begin{array}{c}
x\\
y
\end{array}\right)(t)\right|\left(\begin{array}{c}
u\\
v
\end{array}\right)(t)-\left(\begin{array}{c}
x\\
y
\end{array}\right)(t)\right\rangle _{H_{0}\oplus H_{1}}e^{-2\nu t}\,\mathrm{d}t\\
=\Re\intop_{-\infty}^{a}\left\langle \pi_{\mathcal{BD}(G)}\left(u(t)-x(t)\right)\left|\Bullet D\pi_{\mathcal{BD}(D)}\left(v(t)-y(t)\right)\right.\right\rangle _{\mathcal{BD}(G)}e^{-2\nu t}\,\mathrm{d}t.
\end{align*}
In particular, if $h$ is monotone, then so is $A$.}\end{lem}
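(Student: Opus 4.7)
The plan is to establish the identity first and then read off the monotonicity of $A$ as an immediate corollary. Both sides of the asserted equality are bilinear in the difference $(u,v)-(x,y)\in H_{\nu,0}(\mathbb{R};\mathcal{D}(G)\oplus\mathcal{D}(D))$, and membership in the relation $h$ does not enter the identity itself, so I would first reduce to $(x,y)=(0,0)$ by linearity. Using the graph-norm orthogonal decompositions $\mathcal{D}(G)=\mathcal{D}(G_{c})\oplus\mathcal{BD}(G)$ and $\mathcal{D}(D)=\mathcal{D}(D_{c})\oplus\mathcal{BD}(D)$ recalled in Subsection 2.3, I decompose almost everywhere in $t$
\[
u(t)=u_{c}(t)+u_{b}(t),\qquad v(t)=v_{c}(t)+v_{b}(t),
\]
with $u_{b}=P_{\mathcal{BD}(G)}u$ and $v_{b}=P_{\mathcal{BD}(D)}v$. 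Since these are bounded projections on the graph-norm Hilbert spaces, the pieces remain in $H_{\nu,0}(\mathbb{R};\mathcal{D}(G))$ and $H_{\nu,0}(\mathbb{R};\mathcal{D}(D))$.

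The pointwise integrand on the left is $\Re\bigl(\langle Dv(t),u(t)\rangle_{H_{0}}+\langle Gu(t),v(t)\rangle_{H_{1}}\bigr)$. I would expand this along the decomposition into four contributions. The two contributions involving $u_{c}$ cancel because the inclusion $G_{c}\subseteq -D^{\ast}$ yields $\langle Gu_{c},w\rangle_{H_{1}}+\langle u_{c},Dw\rangle_{H_{0}}=0$ for every $w\in\mathcal{D}(D)$, and taking real parts together with conjugate symmetry of the inner product produces $\Re\langle Gu_{c},w\rangle+\Re\langle Dw,u_{c}\rangle=0$, which I apply with $w=v_{c}$ and $w=v_{b}$. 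By the symmetric argument, $D_{c}\subseteq -G^{\ast}$ kills both terms carrying $v_{c}$. Only the boundary-boundary contribution survives:
\[
\Re\langle Dv_{b},u_{b}\rangle_{H_{0}}+\Re\langle Gu_{b},v_{b}\rangle_{H_{1}}.
\]

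To match this with the right-hand side, I would exploit $\mathcal{BD}(D)=[\{0\}](1-GD)$, which says $v_{b}\in\mathcal{D}(GD)$ with $GDv_{b}=v_{b}$. Thus $\langle Gu_{b},v_{b}\rangle_{H_{1}}=\langle Gu_{b},GDv_{b}\rangle_{H_{1}}$, while conjugate symmetry gives $\Re\langle Dv_{b},u_{b}\rangle_{H_{0}}=\Re\langle u_{b},Dv_{b}\rangle_{H_{0}}$. Adding these two pieces reproduces the real part of the $\mathcal{BD}(G)$ graph inner product
\[
\langle u_{b},\Bullet D v_{b}\rangle_{\mathcal{BD}(G)}=\langle u_{b},Dv_{b}\rangle_{H_{0}}+\langle Gu_{b},GDv_{b}\rangle_{H_{1}},
\]
noting that $\Bullet D v_{b}\in\mathcal{BD}(G)$ by the mapping property of $\Bullet D$. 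Integrating against $e^{-2\nu t}$ over $(-\infty,a]$ yields the asserted equality. The concluding monotonicity statement then drops out: taking $a\to\infty$ turns the identity into $\Re\langle A(u,v)-A(x,y),(u,v)-(x,y)\rangle_{H_{\nu,0}(\mathbb{R};H_{0}\oplus H_{1})}=\Re\langle \pi_{\mathcal{BD}(G)}(u-x),\Bullet D\pi_{\mathcal{BD}(D)}(v-y)\rangle_{H_{\nu,0}(\mathbb{R};\mathcal{BD}(G))}$, and this is non-negative whenever $h$ is monotone, since the two pairs $(\pi_{\mathcal{BD}(G)}u,\Bullet D\pi_{\mathcal{BD}(D)}v)$ and $(\pi_{\mathcal{BD}(G)}x,\Bullet D\pi_{\mathcal{BD}(D)}y)$ lie in $h$ by the definition of $\mathcal{D}(A)$.

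The main technical obstacle is purely bookkeeping: keeping track of the conjugate-linearity convention so that the four-term expansion cleanly collapses to the $b$-$b$ part. The key structural ingredient is the identity $GDv_{b}=v_{b}$, which converts the surviving $H_{1}$-inner product into the graph-norm cross term in $\mathcal{BD}(G)$ and thereby builds the boundary pairing that matches the right-hand side; everything else is the abstract Green formula encoded in the formal skew-adjointness of $G_{c}$ and $D_{c}$.
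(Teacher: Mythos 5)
Your proposal is correct and follows essentially the same route as the paper's proof: both arguments decompose along $\mathcal{D}(G)=\mathcal{D}(G_{c})\oplus\mathcal{BD}(G)$ and $\mathcal{D}(D)=\mathcal{D}(D_{c})\oplus\mathcal{BD}(D)$, cancel the non-boundary contributions via the formal skew-adjointness $G_{c}=-D^{\ast}$, $D_{c}=-G^{\ast}$, and use $GDw=w$ on $\mathcal{BD}(D)$ to reassemble the surviving boundary--boundary term into the $\mathcal{D}(G)$ graph inner product $\left\langle \pi_{\mathcal{BD}(G)}(u-x)\left|\Bullet D\pi_{\mathcal{BD}(D)}(v-y)\right.\right\rangle _{\mathcal{BD}(G)}$. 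Your preliminary reduction to $(x,y)=(0,0)$ and the simultaneous (rather than sequential) splitting of both components are only cosmetic differences from the paper's direct chain of equalities.
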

\selectlanguage{american}%
\begin{proof}
Let $a\in\mathbb{R}.$ Then we compute 
\begin{align*}
 & \Re\intop_{-\infty}^{a}\left\langle \left.A\left(\begin{array}{c}
u\\
v
\end{array}\right)(t)-A\left(\begin{array}{c}
x\\
y
\end{array}\right)(t)\right|\left(\begin{array}{c}
u\\
v
\end{array}\right)(t)-\left(\begin{array}{c}
x\\
y
\end{array}\right)(t)\right\rangle _{H_{0}\oplus H_{1}}e^{-2\nu t}\mbox{ d}t\\
 & =\Re\intop_{-\infty}^{a}\langle D\left(v(t)-y(t)\right)|u(t)-x(t)\rangle_{H_{0}}e^{-2\nu t}\mbox{ d}t+\Re\intop_{-\infty}^{a}\langle G\left(u(t)-x(t)\right)|v(t)-y(t)\rangle_{H_{1}}e^{-2\nu t}\mbox{ d}t\\
 & =\Re\intop_{-\infty}^{a}\langle D\left(v(t)-y(t)\right)|u(t)-x(t)\rangle_{H_{0}}e^{-2\nu t}\mbox{ d}t\\
 & \quad+\Re\intop_{-\infty}^{a}\langle GP_{\mathcal{BD}(G)}\left(u(t)-x(t)\right)|v(t)-y(t)\rangle_{H_{1}}e^{-2\nu t}\mbox{ d}t\\
 & \quad+\Re\intop_{-\infty}^{a}\left\langle \left.G_{c}\left(1-P_{\mathcal{BD}(G)}\right)\left(u(t)-x(t)\right)\right|v(t)-y(t)\right\rangle _{H_{1}}e^{-2\nu t}\mbox{ d}t\\
 & =\Re\intop_{-\infty}^{a}\langle D\left(v(t)-y(t)\right)|u(t)-x(t)\rangle_{H_{0}}e^{-2\nu t}\mbox{ d}t\\
 & \quad+\Re\intop_{-\infty}^{a}\langle GP_{\mathcal{BD}(G)}\left(u(t)-x(t)\right)|v(t)-y(t)\rangle_{H_{1}}e^{-2\nu t}\mbox{ d}t\\
 & \quad-\Re\intop_{-\infty}^{a}\left\langle \left.\left(1-P_{\mathcal{BD}(G)}\right)(u(t)-x(t))\right|D\left(v(t)-y(t)\right)\right\rangle _{H_{0}}e^{-2\nu t}\mbox{ d}t\\
 & =\Re\intop_{-\infty}^{a}\langle GP_{\mathcal{BD}(G)}\left(u(t)-x(t)\right)|v(t)-y(t)\rangle_{H_{1}}e^{-2\nu t}\mbox{ d}t\\
 & \quad+\Re\intop_{-\infty}^{a}\langle P_{\mathcal{BD}(G)}\left(u(t)-x(t)\right)|D\left(v(t)-y(t)\right)\rangle_{H_{0}}e^{-2\nu t}\mbox{ d}t\\
 & =\Re\intop_{-\infty}^{a}\langle GP_{\mathcal{BD}(G)}\left(u(t)-x(t)\right)|P_{\mathcal{BD}(D)}(v(t)-y(t))\rangle_{H_{1}}e^{-2\nu t}\mbox{ d}t\\
 & \quad+\Re\intop_{-\infty}^{a}\left\langle GP_{\mathcal{BD}(G)}\left(u(t)-x(t)\right)\left|\left(1-P_{\mathcal{BD}(D)}\right)\left(v(t)-y(t)\right)\right.\right\rangle _{H_{1}}e^{-2\nu t}\mbox{ d}t\\
 & \quad+\Re\intop_{-\infty}^{a}\langle P_{\mathcal{BD}(G)}\left(u(t)-x(t)\right)|DP_{\mathcal{BD}(D)}\left(v(t)-y(t)\right)\rangle_{H_{0}}e^{-2\nu t}\mbox{ d}t\\
 & \quad+\Re\intop_{-\infty}^{a}\left\langle P_{\mathcal{BD}(G)}\left(u(t)-x(t)\right)\left|D_{c}\left(1-P_{\mathcal{BD}(D)}\right)\left(v(t)-y(t)\right)\right.\right\rangle _{H_{0}}e^{-2\nu t}\mbox{ d}t\\
 & =\Re\intop_{-\infty}^{a}\langle GP_{\mathcal{BD}(G)}\left(u(t)-x(t)\right)|P_{\mathcal{BD}(D)}(v(t)-y(t))\rangle_{H_{1}}e^{-2\nu t}\mbox{ d}t\\
 & \quad+\Re\intop_{-\infty}^{a}\langle P_{\mathcal{BD}(G)}\left(u(t)-x(t)\right)|DP_{\mathcal{BD}(D)}\left(v(t)-y(t)\right)\rangle_{H_{0}}e^{-2\nu t}\mbox{ d}t\\
 & =\Re\intop_{-\infty}^{a}\langle GP_{\mathcal{BD}(G)}\left(u(t)-x(t)\right)|GDP_{\mathcal{BD}(D)}(v(t)-y(t))\rangle_{H_{1}}e^{-2\nu t}\mbox{ d}t\\
 & \quad+\Re\intop_{-\infty}^{a}\langle P_{\mathcal{BD}(G)}\left(u(t)-x(t)\right)|DP_{\mathcal{BD}(D)}\left(v(t)-y(t)\right)\rangle_{H_{0}}e^{-2\nu t}\mbox{ d}t\\
 & =\Re\intop_{-\infty}^{a}\left\langle P_{\mathcal{BD}(G)}\left(u(t)-x(t)\right)\left|DP_{\mathcal{BD}(D)}\left(v(t)-y(t)\right)\right.\right\rangle _{\mathcal{D}(G)}e^{-2\nu t}\mbox{ d}t\\
 & =\Re\intop_{-\infty}^{a}\left\langle \pi_{\mathcal{BD}(G)}\left(u(t)-x(t)\right)\left|\Bullet D\pi_{\mathcal{BD}(D)}\left(v(t)-y(t)\right)\right.\right\rangle _{\mathcal{BD}(G)}e^{-2\nu t}\mbox{ d}t.\tag*{\qedhere}
\end{align*}
\end{proof}
\selectlanguage{english}%
\begin{lem}
\label{lem:closed}\foreignlanguage{american}{If the relation $h$
is closed, then so is $A$. }\end{lem}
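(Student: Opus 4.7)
The plan is to verify closedness directly from the definition: take a sequence $((u_n,v_n))_{n\in\mathbb{N}}$ in $\mathcal{D}(A)$ with $(u_n,v_n)\to(u,v)$ and $A(u_n,v_n)\to(f,g)$ in $H_{\nu,0}(\mathbb{R};H_0\oplus H_1)$, and show $(u,v)\in\mathcal{D}(A)$ together with $A(u,v)=(f,g)$.

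First I would unpack the convergence of $A(u_n,v_n)=(Dv_n,Gu_n)$ to $(f,g)$: the canonical extensions of $G$ and $D$ to the $H_{\nu,0}$-spaces are again closed linear operators, so from $u_n\to u$ and $Gu_n\to g$ I obtain $u\in H_{\nu,0}(\mathbb{R};\mathcal{D}(G))$ with $Gu=g$; analogously, $v\in H_{\nu,0}(\mathbb{R};\mathcal{D}(D))$ with $Dv=f$. This already takes care of the action of $A$ and the regularity part of $\mathcal{D}(A)$, and, more importantly, gives $u_n\to u$ in the graph norm of $G$ and $v_n\to v$ in the graph norm of $D$ (pointwise-in-time-integrated).

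Second, I would propagate this graph-norm convergence through the boundary machinery. Because $\mathcal{BD}(G)$ is by definition the orthogonal complement of $\mathcal{D}(G_c)$ in $\mathcal{D}(G)$ equipped with its graph inner product, the projector $\pi_{\mathcal{BD}(G)}$ is a contraction in the graph norm and therefore extends canonically to a bounded map $H_{\nu,0}(\mathbb{R};\mathcal{D}(G))\to H_{\nu,0}(\mathbb{R};\mathcal{BD}(G))$; the same applies to $\pi_{\mathcal{BD}(D)}$. Hence $\pi_{\mathcal{BD}(G)}u_n\to\pi_{\mathcal{BD}(G)}u$ in $H_{\nu,0}(\mathbb{R};\mathcal{BD}(G))$ and $\pi_{\mathcal{BD}(D)}v_n\to\pi_{\mathcal{BD}(D)}v$ in $H_{\nu,0}(\mathbb{R};\mathcal{BD}(D))$. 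Applying the unitary operator $\Bullet D:\mathcal{BD}(D)\to\mathcal{BD}(G)$ (again extended canonically in time) preserves the convergence, so
\[
\bigl(\pi_{\mathcal{BD}(G)}u_n,\Bullet D\pi_{\mathcal{BD}(D)}v_n\bigr)\to\bigl(\pi_{\mathcal{BD}(G)}u,\Bullet D\pi_{\mathcal{BD}(D)}v\bigr)
\]
in $H_{\nu,0}(\mathbb{R};\mathcal{BD}(G))\oplus H_{\nu,0}(\mathbb{R};\mathcal{BD}(G))$. Since the left-hand elements lie in $h$ for every $n$ and $h$ is closed by assumption, the limit lies in $h$ as well, which is exactly the boundary condition defining $\mathcal{D}(A)$.

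The only delicate point is the compatibility of topologies: one has to be careful that the convergence $(u_n,v_n)\to(u,v)$ and $A(u_n,v_n)\to(f,g)$ in $H_{\nu,0}(\mathbb{R};H_0\oplus H_1)$ actually upgrades to convergence of $\pi_{\mathcal{BD}(G)}u_n$ and $\Bullet D\pi_{\mathcal{BD}(D)}v_n$ in the boundary data spaces (endowed with the graph norms), not merely in $H_0$/$H_1$. This is exactly what the preceding observation about contractivity of the boundary projectors in the graph norms delivers, after which the closedness of $h$ finishes the argument.
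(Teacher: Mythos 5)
Your proposal is correct and follows essentially the same route as the paper: use the closedness of the canonical extensions of $G$ and $D$ to upgrade the convergence to graph-norm convergence in $H_{\nu,0}(\mathbb{R};\mathcal{D}(G))$ and $H_{\nu,0}(\mathbb{R};\mathcal{D}(D))$, push this through the (graph-norm continuous) projections $\pi_{\mathcal{BD}(G)}$, $\pi_{\mathcal{BD}(D)}$ and the unitary $\Bullet D$, and invoke the closedness of $h$. The point you flag as delicate --- that the boundary traces converge in the $\mathcal{BD}$-topologies rather than merely in $H_0\oplus H_1$ --- is exactly the step the paper's proof also rests on.
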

\selectlanguage{american}%
\begin{proof}
Let $\left((u_{n},v_{n})\right)_{n\in\mathbb{N}}$ be a sequence in
$\mathcal{D}(A)$ such that $u_{n}\to u,v_{n}\to v$ as well as $Dv_{n}\to w$
and $Gu_{n}\to x$ in $H_{\nu,0}(\mathbb{R};H_{0})$ and $H_{\nu,0}(\mathbb{R};H_{1})$
respectively. Due to the closedness of $G$ and $D$ we obtain that
$u\in H_{\nu,0}(\mathbb{R};\mathcal{D}(G)),v\in H_{\nu,0}(\mathbb{R};\mathcal{D}(D))$
and $Gu=x,Dv=w.$ Note that this means that $u_{n}\to u$ and $v_{n}\to v$
in $H_{\nu,0}(\mathbb{R};\mathcal{D}(G))$ and $H_{\nu,0}(\mathbb{R};\mathcal{D}(D))$,
respectively. The latter implies $\Bullet D\pi_{\mathcal{BD}(D)}v_{n}\to\Bullet D\pi_{\mathcal{BD}(D)}v$
and $\pi_{\mathcal{BD}(G)}u_{n}\to\pi_{\mathcal{BD}(G)}u$ in $H_{\nu,0}(\mathbb{R};\mathcal{BD}(G)).$
Since $\left(\pi_{\mathcal{BD}(G)}u_{n},\Bullet D\pi_{\mathcal{BD}(D)}v_{n}\right)\in h$
for each $n\in\mathbb{N},$ the closedness of $h$ yields $\left(\pi_{\mathcal{BD}(G)}u,\Bullet D\pi_{\mathcal{BD}(D)}v\right)\in h.$
This shows $(u,v)\in\mathcal{D}(A)$ and hence, $A$ is closed.
\end{proof}
Now we are able to state the main result of this subsection.
\begin{prop}
\label{prop:A_max_mon}Let $h$ be maximal monotone. Then $A$ is
maximal monotone, too.\end{prop}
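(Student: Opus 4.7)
The plan is to verify the range condition of Minty's theorem. By \prettyref{lem:monotone} the relation $A$ is monotone, so by \prettyref{thm:Minty} it suffices to show $(1+A)[\mathcal{D}(A)] = H_{\nu,0}(\mathbb{R}; H_0 \oplus H_1)$. I would proceed in two steps: first solve the ``unconstrained'' equation $(1+S)(u,v) = (f,g)$ for the full operator $S := \begin{pmatrix} 0 & D \\ G & 0 \end{pmatrix}$ with natural domain $H_{\nu,0}(\mathbb{R}; \mathcal{D}(G) \oplus \mathcal{D}(D))$, and then use the one-parameter family of solutions, parametrized by $\ker(1+S)$, to adjust the boundary trace so that the constraint $(\pi_{\mathcal{BD}(G)} u, \Bullet D \pi_{\mathcal{BD}(D)} v) \in h$ is satisfied.

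For the surjectivity of $1+S$ I would introduce $S_c := \begin{pmatrix} 0 & D_c \\ G_c & 0 \end{pmatrix}$ on $H_{\nu,0}(\mathbb{R}; \mathcal{D}(G_c) \oplus \mathcal{D}(D_c))$. The formal skew-adjointness relations $D_c^* = -G$ and $G_c^* = -D$ yield $S_c^* = -S$, hence (taking adjoints again) $S^* = -S_c$ and $(1+S)^* = 1 - S_c$. Since $S_c$ is densely defined, closed and skew-symmetric, the identity $|(1-S_c)(u,v)|^2 = |(u,v)|^2 + |S_c(u,v)|^2$ shows that $1-S_c$ is bounded below by $1$ and hence has closed range; moreover $\ker(1-S_c) = \{0\}$, because $u = D_c v$ and $v = G_c u$ give $|u|^2 = \langle u, D_c G_c u \rangle = \langle D_c^* u, G_c u \rangle = -|G_c u|^2$, forcing $u = v = 0$. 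The closed range theorem then delivers $(1+S)[\mathcal{D}(S)] = \ker(1-S_c)^{\bot} = H_{\nu,0}(\mathbb{R}; H_0 \oplus H_1)$.

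Given $(f,g)$ and any $(u^0, v^0) \in \mathcal{D}(S)$ with $(1+S)(u^0, v^0) = (f, g)$, set $\phi := \pi_{\mathcal{BD}(G)} u^0 + \Bullet D \pi_{\mathcal{BD}(D)} v^0$. A short computation identifies $\ker(1+S) = \{(a', -\Bullet G\, a') \mid a' \in H_{\nu,0}(\mathbb{R}; \mathcal{BD}(G))\}$, and using that $\Bullet D\, \Bullet G = 1$ on $\mathcal{BD}(G)$ one checks that the boundary trace of any shifted solution $(u^0 + a', v^0 - \Bullet G\, a')$ equals $(\pi_{\mathcal{BD}(G)} u^0 + a', \Bullet D \pi_{\mathcal{BD}(D)} v^0 - a')$, so that the sum $\pi_{\mathcal{BD}(G)} u + \Bullet D \pi_{\mathcal{BD}(D)} v = \phi$ is invariant on the whole family of solutions of $(1+S)(u,v) = (f,g)$. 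Hence the existence of a solution of $(1+A)(u,v) = (f,g)$ reduces to finding $\xi \in H_{\nu,0}(\mathbb{R}; \mathcal{BD}(G))$ with $(\xi, \phi - \xi) \in h$, i.e.\ $\phi \in (1+h)[\{\xi\}]$. The maximal monotonicity of $h$ and \prettyref{thm:Minty} applied to $h$ provide such a $\xi$; setting $a' := \xi - \pi_{\mathcal{BD}(G)} u^0$ and $(u,v) := (u^0 + a', v^0 - \Bullet G\, a')$ then yields the required element of $\mathcal{D}(A)$ with $(1+A)(u,v) = (f,g)$.

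The main technical obstacle is the surjectivity of $1+S$: the adjoint identification $S^* = -S_c$ and the kernel computation exploiting $D_c^* = -G$ form the core of the argument. Once these are in place, the parametrization of the solution set of $(1+S)(u,v)=(f,g)$, the invariance of $\phi$, and the final application of \prettyref{thm:Minty} to $h$ are essentially algebraic.
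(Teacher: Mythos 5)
Your argument is correct, and it reaches the conclusion by a genuinely different route than the paper. The paper first establishes that $A$ is closed (via \prettyref{lem:closed}) and then only proves that $1+A$ has \emph{dense} range, by an explicit construction for data $(f,g)\in H_{\nu,0}(\mathbb{R};\mathcal{D}(G_{c})\oplus\mathcal{D}(D_{c}))$ built from the resolvents $(1-DG_{c})^{-1}$ and $(1-G_{c}D)^{-1}$. You instead prove genuine surjectivity of the unconstrained operator $1+S$, $S=\left(\begin{smallmatrix}0 & D\\ G & 0\end{smallmatrix}\right)$, by identifying $S^{\ast}=-S_{c}$ and invoking the closed range theorem together with the lower bound $|(1-S_{c})w|^{2}=|w|^{2}+|S_{c}w|^{2}$; you then observe that the solution set of $(1+S)(u,v)=(f,g)$ is an affine space over $\ker(1+S)=\{(a,-\Bullet G\,a)\,|\,a\in H_{\nu,0}(\mathbb{R};\mathcal{BD}(G))\}$, that the quantity $\phi=\pi_{\mathcal{BD}(G)}u+\Bullet D\pi_{\mathcal{BD}(D)}v$ is constant on this affine space, and that the boundary constraint amounts to choosing the unique splitting $\phi=\xi+(\phi-\xi)$ with $\xi=(1+h)^{-1}(\phi)$. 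This buys you two things: you get surjectivity of $1+A$ on the whole space at once, so the closedness lemma and the density argument are not needed, and the role of $h$ becomes structurally transparent (the free problem fixes $\phi$; the relation $h$ selects the trace). What it costs is the extra functional-analytic input (adjoint identification and closed range theorem) in place of the paper's elementary explicit formula; note, though, that the two proofs are closely related under the surface: the paper's correction term $\left(\pi_{\mathcal{BD}(G)}^{\ast}\xi,\,-\pi_{\mathcal{BD}(D)}^{\ast}\Bullet G\,\xi\right)$ is exactly an element of your $\ker(1+S)$, and the argument $-\Bullet D\pi_{\mathcal{BD}(D)}G_{c}\tilde{u}$ fed into $(1+h)^{-1}$ is exactly your invariant $\phi$ evaluated at the particular solution $(\tilde{u},\tilde{v})$. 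All the individual steps you use check out: $S_{c}^{\ast}=-S$ follows from $G=-(D_{c})^{\ast}$ and $D=-(G_{c})^{\ast}$, the kernel computation for $1-S_{c}$ is sound, $\Bullet D\,\Bullet G=1$ on $\mathcal{BD}(G)$ holds since $\Bullet G$ is unitary with adjoint $\Bullet D$, and Minty's theorem applied to $h$ supplies $\xi$.
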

\begin{proof}
Since every maximal monotone relation is closed (in fact it is even
demi-closed, see Subsection 2.2), the operator $A$ is closed according
to \prettyref{lem:closed}. Moreover, by \prettyref{lem:monotone}
$A$ is monotone. Thus, by \prettyref{thm:Minty}, it suffices to
prove that $(1+A)$ has a dense range. To this end let $f\in H_{\nu,0}(\mathbb{R};\mathcal{D}(G_{c}))$
and $g\in H_{\nu,0}(\mathbb{R};\mathcal{D}(D_{c})).$ We define%
\footnote{Note that $1-G_{c}D$ and $1-DG_{c}$ are boundedly invertible, since
$-G_{c}D$ and $-DG_{c}$ are positive selfadjoint operators.%
}\foreignlanguage{english}{ 
\begin{align*}
\tilde{u} & \coloneqq(1-DG_{c})^{-1}f-D(1-G_{c}D)^{-1}g\in H_{\nu,0}(\mathbb{R};\mathcal{D}(DG_{c})),\\
\tilde{v} & \coloneqq(1-G_{c}D)^{-1}g-G_{c}(1-DG_{c})^{-1}f\in H_{\nu,0}(\mathbb{R};\mathcal{D}(D)),
\end{align*}
}as well as\foreignlanguage{english}{ 
\begin{align*}
u & \coloneqq\pi_{\mathcal{BD}(G)}^{\ast}(1+h)^{-1}\left(-\Bullet D\pi_{\mathcal{BD}(D)}G_{c}\tilde{u}\right)+\tilde{u}\in H_{\nu,0}(\mathbb{R};\mathcal{D}(G)),\\
v & \coloneqq-\pi_{\mathcal{BD}(D)}^{\ast}\Bullet G(1+h)^{-1}\left(-\Bullet D\pi_{\mathcal{BD}(D)}G_{c}\tilde{u}\right)+\tilde{v}\in H_{\nu,0}(\mathbb{R};\mathcal{D}(D)).
\end{align*}
}We show that $(u,v)\in\mathcal{D}(A)$ and that $\left(1+A\right)\left(\begin{array}{c}
u\\
v
\end{array}\right)=\left(\begin{array}{c}
f\\
g
\end{array}\right).$ First, we compute 
\begin{align*}
\tilde{u}+D\tilde{v} & =(1-DG_{c})^{-1}f-D(1-G_{c}D)^{-1}g+D\left((1-G_{c}D)^{-1}g-G_{c}(1-DG_{c})^{-1}f\right)\\
 & =(1-DG_{c})(1-DG_{c})^{-1}f\\
 & =f,
\end{align*}
and 
\begin{align*}
\tilde{v}+G_{c}\tilde{u} & =(1-G_{c}D)^{-1}g-G_{c}(1-DG_{c})^{-1}f+G_{c}\left((1-DG_{c})^{-1}f-D(1-G_{c}D)^{-1}g\right)\\
 & =(1-G_{c}D)(1-G_{c}D)^{-1}g\\
 & =g.
\end{align*}
Moreover, we get that 
\begin{align*}
 & u+Dv\\
 & =\pi_{\mathcal{BD}(G)}^{\ast}(1+h)^{-1}\left(-\Bullet D\pi_{\mathcal{BD}(D)}G_{c}\tilde{u}\right)+\tilde{u}+D\left(-\pi_{\mathcal{BD}(D)}^{\ast}\Bullet G(1+h)^{-1}\left(-\Bullet D\pi_{\mathcal{BD}(D)}G_{c}\tilde{u}\right)+\tilde{v}\right)\\
 & =\pi_{\mathcal{BD}(G)}^{\ast}(1+h)^{-1}\left(-\Bullet D\pi_{\mathcal{BD}(D)}G_{c}\tilde{u}\right)+\tilde{u}-\pi_{\mathcal{BD}(G)}^{\ast}\Bullet D\Bullet G(1+h)^{-1}\left(-\Bullet D\pi_{\mathcal{BD}(D)}G_{c}\tilde{u}\right)+D\tilde{v}\\
 & =\tilde{u}+D\tilde{v}=f,
\end{align*}
as well as 
\begin{align*}
 & v+Gu\\
 & =-\pi_{\mathcal{BD}(D)}^{\ast}\Bullet G(1+h)^{-1}\left(-\Bullet D\pi_{\mathcal{BD}(D)}G_{c}\tilde{u}\right)+\tilde{v}+G\left(\pi_{\mathcal{BD}(G)}^{\ast}(1+h)^{-1}\left(-\Bullet D\pi_{\mathcal{BD}(D)}G_{c}\tilde{u}\right)+\tilde{u}\right)\\
 & =\tilde{v}+G_{c}\tilde{u}=g,
\end{align*}
which shows that $\left(1+\left(\begin{array}{cc}
0 & D\\
G & 0
\end{array}\right)\right)\left(\begin{array}{c}
u\\
v
\end{array}\right)=\left(\begin{array}{c}
f\\
g
\end{array}\right).$ Hence, the only thing which is left to show is $\left(\pi_{\mathcal{BD}(G)}u,\Bullet D\pi_{\mathcal{BD}(D)}v\right)\in h.$
Since $\tilde{u}$ takes values in $\mathcal{D}(G_{c})$ we get that
\begin{equation}
\pi_{\mathcal{BD}(G)}u=(1+h)^{-1}\left(-\Bullet D\pi_{\mathcal{BD}(D)}G_{c}\tilde{u}\right).\label{eq:fixpoint}
\end{equation}
Using that 
\[
\pi_{\mathcal{BD}(D)}G_{c}D(1-G_{c}D)^{-1}g=-\pi_{\mathcal{BD}(D)}g+\pi_{\mathcal{BD}(D)}(1-G_{c}D)^{-1}g=\pi_{\mathcal{BD}(D)}(1-G_{c}D)^{-1}g,
\]
we obtain 
\begin{align*}
\pi_{\mathcal{BD}(D)}G_{c}\tilde{u} & =\pi_{\mathcal{BD}(D)}G_{c}\left((1-DG_{c})^{-1}f-D(1-G_{c}D)^{-1}g\right)\\
 & =\pi_{\mathcal{BD}(D)}G_{c}\left(1-DG_{c}\right)^{-1}f-\pi_{\mathcal{BD}(D)}\left(1-G_{c}D\right)^{-1}g\\
 & =-\pi_{\mathcal{BD}(D)}\tilde{v}.
\end{align*}
Thus, we compute 
\begin{align*}
 & \pi_{\mathcal{BD}(D)}v+\Bullet G\pi_{\mathcal{BD}(G)}u\\
 & =-\Bullet G(1+h)^{-1}\left(-\Bullet D\pi_{\mathcal{BD}(D)}G_{c}\tilde{u}\right)+\pi_{\mathcal{BD}(D)}\tilde{v}+\Bullet G(1+h)^{-1}\left(-\Bullet D\pi_{\mathcal{BD}(D)}G_{c}\tilde{u}\right)\\
 & =\pi_{\mathcal{BD}(D)}\tilde{v}=-\pi_{\mathcal{BD}(D)}G_{c}\tilde{u},
\end{align*}
which gives 
\[
\Bullet D\pi_{\mathcal{BD}(D)}v+\pi_{\mathcal{BD}(G)}u=-\Bullet D\pi_{\mathcal{BD}(D)}G_{c}\tilde{u}.
\]
Together with \prettyref{eq:fixpoint} the latter yields 
\[
\pi_{\mathcal{BD}(G)}u=(1+h)^{-1}\left(\Bullet D\pi_{\mathcal{BD}(D)}v+\pi_{\mathcal{BD}(G)}u\right),
\]
which is equivalent to $(\pi_{\mathcal{BD}(G)}u,\Bullet D\pi_{\mathcal{BD}(D)}v)\in h.$
\end{proof}
Summarizing we have shown that if $h$ satisfies the hypothesis (H3),
then so does $A$, which follows by \prettyref{lem:monotone}. Moreover,
it follows that if $h$ satisfies (H2), then $A$ satisfies (H2) as
well. Indeed, the maximal monotonicity was shown in \prettyref{prop:A_max_mon}
while the statement that $A$ is autonomous if $h$ is autonomous
follows directly by the definition of $A.$ Hence, we have proved
\prettyref{thm:A_hypotheses}.

\subsection{Applications}

In this subsection we apply our findings of the Sections 3 and 4.1
to two examples. The first problem is linear and deals with the model
of acoustic waves with an impedance-type boundary condition (see \cite{Picard2012_Impedance}).
The second one, describing a frictional contact problem for a visco-elastic
medium, was also treated by Migorski et al. (see \cite{Migorski_2009})
for an antiplane model, using a variational approach.

\subsubsection*{Wave propagation with impedance-type boundary conditions}

Throughout let $\Omega\subseteq\mathbb{R}^{n}$. We define the following
differential operators, which will play the role of $G$ and $D$. 
\begin{defn*}
We define $\grad_{c}$ as the closure of the operator 
\begin{align*}
\grad|_{C_{c}^{\infty}(\Omega)}:C_{c}^{\infty}(\Omega)\subseteq L_{2}(\Omega) & \to L_{2}(\Omega)^{n}\\
\phi & \mapsto\left(\partial_{i}\phi\right)_{i\in\{1,\ldots,n\}}
\end{align*}
and likewise $\dive_{c}$ as the closure of 
\begin{align*}
\dive|_{C_{c}^{\infty}(\Omega)^{n}}:C_{c}^{\infty}(\Omega)^{n}\subseteq L_{2}(\Omega)^{n} & \to L_{2}(\Omega)\\
\left(\psi_{i}\right)_{i\in\{1,\ldots,n\}} & \mapsto\sum_{i=1}^{n}\partial_{i}\psi_{i}.
\end{align*}
\end{defn*}
\begin{rem}
The operators $\grad_{c}$ and $\dive_{c}$ are formally skew-adjoint
and as above we set $\grad\coloneqq-\left(\dive_{c}\right)^{\ast}$
and $\dive\coloneqq-\left(\grad_{c}\right)^{\ast}.$ Note that an
element $u$ lying in the domain of $\grad_{c}$ satisfies the abstract
Dirichlet-boundary condition ``$u=0$'' on $\partial\Omega.$ Likewise,
an element $q\in\mathcal{D}\left(\dive_{c}\right)$ satisfies the
Neumann-boundary condition ``$q\cdot N=0$'' on $\partial\Omega$
in case of a smooth boundary $\partial\Omega,$ where $N$ denotes
the unit outer normal vector-field on $\partial\Omega$. The corresponding
boundary data spaces are given by $\mathcal{BD}(\grad)=[\{0\}](1-\dive\grad),$
the space of 1-harmonic functions on $\Omega$, and $\mathcal{BD}(\dive)=[\{0\}](1-\grad\dive).$ 
\end{rem}
Following \cite{Picard2012_Impedance} we consider the following boundary
value problem:
\begin{align}
\partial_{0,\nu}^{2}u-\dive\grad u & =f\mbox{ on }\Omega,\label{eq:wave}\\
\left(\partial_{0,\nu}^{2}a(\partial_{0,\nu}^{-1})u+\grad u\right)\cdot N & =0\mbox{ on }\partial\Omega.\label{eq:wave_bd}
\end{align}
Here $a:B_{\mathbb{C}}(r,r)\to L\left(L_{2}(\Omega)^{n}\right)$ is
a linear material law for some $r>0$, having the power series representation
\begin{equation}
a(z)=\sum_{k=0}^{\infty}a_{k}(m)(z-r)^{k},\label{eq:power-series_a}
\end{equation}
where the $a_{k}$'s are $L_{\infty}(\Omega)$-vector fields, and
$a_{k}(m)$ denotes the corresponding multiplication operator. Moreover
we assume that $\dive a_{k}\in L_{\infty}(\Omega)$ such that 
\begin{equation}
\left(\dive a\right):B_{\mathbb{C}}(r,r)\ni z\mapsto\sum_{k=0}^{\infty}\left(\dive a_{k}\right)(m)(z-r)^{k}\in L\left(L_{2}(\Omega)\right)\label{eq:div a}
\end{equation}
gets bounded. We choose $\nu>\frac{1}{2r}.$ Using the product rule
$\dive\left(a(\partial_{0,\nu}^{-1})p\right)=\left(\left(\dive a\right)(\partial_{0,\nu}^{-1})\right)p+a(\partial_{0,\nu}^{-1})\grad p$
for $p\in\mathcal{D}(\grad),$ we can establish $a(\partial_{0,\nu}^{-1})$
as a bounded operator 
\[
a(\partial_{0,\nu}^{-1}):H_{\nu,0}(\mathbb{R};\mathcal{D}(\grad))\to H_{\nu,0}(\mathbb{R};\mathcal{D}(\dive)).
\]
Moreover, one can show that $a\left(\partial_{0,\nu}^{-1}\right)\left[H_{\nu,0}\left(\mathbb{R};\mathcal{D}(\grad_{c})\right)\right]\subseteq H_{\nu,0}\left(\mathbb{R};\mathcal{D}(\dive_{c})\right).$
We define $h:\mathcal{D}(h)\subseteq H_{\nu,0}(\mathbb{R};\mathcal{BD}(\grad))\to H_{\nu,0}(\mathbb{R};\mathcal{BD}(\grad))$
by $h\coloneqq\partial_{0,\nu}\Bullet\dive\pi_{\mathcal{BD}(\dive)}a(\partial_{0,\nu}^{-1})\pi_{\mathcal{BD}(\grad)}^{\ast}$
with maximal domain, which is a densely defined closed linear operator.
Consider the operator 
\begin{align*}
A:\mathcal{D}(A)\subseteq H_{\nu,0}\left(\mathbb{R};L_{2}(\Omega)\oplus L_{2}(\Omega)^{n}\right) & \to H_{\nu,0}\left(\mathbb{R};L_{2}(\Omega)\oplus L_{2}(\Omega)^{n}\right)\\
\left(\begin{array}{c}
u\\
q
\end{array}\right) & \mapsto\left(\begin{array}{cc}
0 & \dive\\
\grad & 0
\end{array}\right)\left(\begin{array}{c}
u\\
q
\end{array}\right),
\end{align*}
where 
\[
\mathcal{D}(A)\coloneqq\left\{ (u,q)\in H_{\nu,0}\left(\mathbb{R};\mathcal{D}(\grad)\oplus\mathcal{D}(\dive)\right)\,\left|\, h\left(\pi_{\mathcal{BD}(\grad)}u\right)=\Bullet\dive\pi_{\mathcal{BD}(\dive)}q\right.\right\} .
\]
Note that, by definition, the operator $h$ is autonomous in the sense
of (H2). The evolutionary equation for the boundary value problem
\prettyref{eq:wave}, \prettyref{eq:wave_bd} is then given by 
\begin{equation}
\left(\partial_{0,\nu}+A\right)\left(\begin{array}{c}
u\\
q
\end{array}\right)=\left(\begin{array}{c}
\partial_{0,\nu}^{-1}f\\
0
\end{array}\right).\label{eq:wave_evo}
\end{equation}
Indeed, if $\left(\begin{array}{c}
u\\
q
\end{array}\right)$ is a solution of \prettyref{eq:wave_evo} then we formally get that
$\partial_{0,\nu}q+\grad u=0$, which gives $q=-\partial_{0,\nu}^{-1}\grad u.$
Moreover, the first line of \prettyref{eq:wave_evo} gives $\partial_{0,\nu}u+\dive q=\partial_{0,\nu}^{-1}f.$
Hence, the formula for $q$ yields $\partial_{0,\nu}u-\partial_{0,\nu}^{-1}\dive\grad u=\partial_{0,\nu}^{-1}f,$
which gives \prettyref{eq:wave}. Moreover, by the domain of $A$
we get that 
\begin{align*}
\Bullet\dive\pi_{\mathcal{BD}(\dive)}q & =h\left(\pi_{\mathcal{BD}(\grad)}u\right)\\
 & =\partial_{0,\nu}\Bullet\dive\pi_{\mathcal{BD}(\dive)}a(\partial_{0,\nu}^{-1})\pi_{\mathcal{BD}(\grad)}^{\ast}\pi_{\mathcal{BD}(\grad)}u\\
 & =\partial_{0,\nu}\Bullet\dive\pi_{\mathcal{BD}(\dive)}a(\partial_{0,\nu}^{-1})u,
\end{align*}
where in the third equality we have used that $a\left(\partial_{0,\nu}^{-1}\right)\left[H_{\nu,0}\left(\mathbb{R};\mathcal{D}(\grad_{c})\right)\right]\subseteq H_{\nu,0}\left(\mathbb{R};\mathcal{D}(\dive_{c})\right).$
Since $q=-\partial_{0,\nu}^{-1}\grad u$ the latter equation gives
\[
-\partial_{0,\nu}^{-2}\pi_{\mathcal{BD}(\dive)}\grad u=\pi_{\mathcal{BD}(\dive)}a(\partial_{0,\nu}^{-1})u,
\]
which is the appropriate formulation for the boundary condition \prettyref{eq:wave_bd}
within our framework. According to our findings in Subsection 4.1
it suffices to check, whether $h$ satisfies the hypotheses (H2) and
(H3). In \cite{Picard2012_Impedance} we find an additional constraint
on $a(\partial_{0,\nu}^{-1})$, namely 
\begin{equation}
\Re\intop_{-\infty}^{0}\left(\langle\grad u|\partial_{0,\nu}a(\partial_{0,\nu}^{-1})u\rangle(t)+\langle u|\dive\partial_{0,\nu}a(\partial_{0,\nu}^{-1})u\rangle(t)\right)e^{-2\nu t}\mbox{ d}t\geq0\label{eq:cond_Picard}
\end{equation}
for all $u\in H_{\nu,0}(\mathbb{R};\mathcal{D}(\grad))\cap\mathcal{D}(\partial_{0,\nu})$.
This condition implies the hypothesis (H3). Indeed, for $u\in\mathcal{D}(\partial_{0,\nu})\cap H_{\nu,0}(\mathbb{R};\mathcal{BD}(\grad))$
we estimate 
\begin{align*}
 & \Re\intop_{-\infty}^{0}\langle hu|u\rangle_{\mathcal{BD}(\grad)}(t)e^{-2\nu t}\mbox{ d}t\\
 & =\Re\intop_{-\infty}^{0}\left\langle \left.\partial_{0,\nu}\dive P_{\mathcal{BD}(\dive)}a(\partial_{0,\nu}^{-1})\pi_{\mathcal{BD}(\grad)}^{\ast}u\right|\pi_{\mathcal{BD}(\grad)}^{\ast}u\right\rangle _{\mathcal{D}(\grad)}(t)\, e^{-2\nu t}\mbox{ d}t\\
 & =\Re\intop_{-\infty}^{0}\left\langle P_{\mathcal{BD}(\dive)}\partial_{0,\nu}a(\partial_{0,\nu}^{-1})\pi_{\mathcal{BD}(\grad)}^{\ast}u\left|\grad\pi_{\mathcal{BD}(\grad)}^{\ast}u\right.\right\rangle (t)\, e^{-2\nu t}\mbox{ d}t\\
 & \quad+\Re\intop_{-\infty}^{0}\left\langle \left.\dive P_{\mathcal{BD}(\dive)}\partial_{0,\nu}a(\partial_{0,\nu}^{-1})\pi_{\mathcal{BD}(\grad)}^{\ast}u\right|\pi_{\mathcal{BD}(\grad)}^{\ast}u\right\rangle (t)\, e^{-2\nu t}\mbox{ d}t\\
 & =\Re\intop_{-\infty}^{0}\left\langle \partial_{0,\nu}a(\partial_{0,\nu}^{-1})\pi_{\mathcal{BD}(\grad)}^{\ast}u\left|\grad\pi_{\mathcal{BD}(\grad)}^{\ast}u\right.\right\rangle (t)\, e^{-2\nu t}\mbox{ d}t\\
 & \quad-\Re\intop_{-\infty}^{0}\left\langle P_{\mathcal{D}(\dive_{c})}\partial_{0,\nu}a(\partial_{0,\nu}^{-1})\pi_{\mathcal{BD}(\grad)}^{\ast}u\left|\grad\pi_{\mathcal{BD}(\grad)}^{\ast}u\right.\right\rangle (t)\, e^{-2\nu t}\mbox{ d}t\\
 & \quad+\Re\intop_{-\infty}^{0}\left\langle \left.\dive\partial_{0,\nu}a(\partial_{0,\nu}^{-1})\pi_{\mathcal{BD}(\grad)}^{\ast}u\right|\pi_{\mathcal{BD}(\grad)}^{\ast}u\right\rangle (t)\, e^{-2\nu t}\mbox{ d}t\\
 & \quad-\Re\intop_{-\infty}^{0}\left\langle \left.\dive_{c}P_{\mathcal{D}(\dive_{c})}\partial_{0,\nu}a(\partial_{0,\nu}^{-1})\pi_{\mathcal{BD}(\grad)}^{\ast}u\right|\pi_{\mathcal{BD}(\grad)}^{\ast}u\right\rangle (t)\, e^{-2\nu t}\mbox{ d}t\\
 & =\Re\intop_{-\infty}^{0}\left\langle \partial_{0,\nu}a(\partial_{0,\nu}^{-1})\pi_{\mathcal{BD}(\grad)}^{\ast}u\left|\grad\pi_{\mathcal{BD}(\grad)}^{\ast}u\right.\right\rangle (t)\, e^{-2\nu t}\mbox{ d}t\\
 & \quad+\Re\intop_{-\infty}^{0}\left\langle \dive\partial_{0,\nu}a(\partial_{0,\nu}^{-1})\pi_{\mathcal{BD}(\grad)}^{\ast}u\left|\pi_{\mathcal{BD}(\grad)}^{\ast}u\right.\right\rangle (t)\, e^{-2\nu t}\mbox{ d}t\\
 & \geq0.
\end{align*}
Since $\mathcal{D}(\partial_{0,\nu})\cap H_{\nu,0}(\mathbb{R};\mathcal{BD}(\grad))$
is a core of $h$, according to \prettyref{lem:core} and since $h$
commutes with the operators $\tau_{h}$ for each $h\in\mathbb{R},$
this yields the monotonicity of $h.$ For showing the maximal monotonicity
of $h,$ we determine its adjoint. Using \prettyref{lem:core} we
get that 
\[
h^{\ast}=\partial_{0,\nu}^{\ast}\pi_{\mathcal{BD}(\grad)}a(\partial_{0,\nu}^{-1})^{\ast}\pi_{\mathcal{BD}(\dive)}^{\ast}\Bullet\grad,
\]
and we obtain that $\mathcal{D}(\partial_{0,\nu})\cap H_{\nu,0}(\mathbb{R};\mathcal{BD}(\grad))=\mathcal{D}(\partial_{0,\nu}^{\ast})\cap H_{\nu,0}(\mathbb{R};\mathcal{BD}(\grad))$
is a core of $h^{\ast}.$ Hence, $h^{\ast}$ is monotone, since for
each $u\in\mathcal{D}(\partial_{0,\nu})\cap H_{\nu,0}(\mathbb{R};\mathcal{BD}(\grad))\subseteq\mathcal{D}(h)$
we estimate 
\[
\Re\langle h^{\ast}u|u\rangle_{H_{\nu,0}(\mathbb{R};\mathcal{BD}(\grad))}=\Re\langle u|hu\rangle_{H_{\nu,0}(\mathbb{R};\mathcal{BD}(\grad))}\geq0.
\]
We summarize our findings in the following theorem.
\begin{thm}
Let $a:B_{\mathbb{C}}(r,r)\to L(L_{2}(\Omega)^{n})$ be of the form
\prettyref{eq:power-series_a} and let $a$ satisfy \prettyref{eq:div a}
and \prettyref{eq:cond_Picard}. Then the evolutionary equation \prettyref{eq:wave_evo}
is well-posed and the corresponding solution operator is causal.\end{thm}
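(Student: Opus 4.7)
The plan is to recast \eqref{eq:wave_evo} as an inclusion of the form \eqref{eq:evol} and to apply Theorem 3.5 together with Proposition 3.9. We take the underlying space to be $L_{2}(\Omega)\oplus L_{2}(\Omega)^{n}$, the material law to be the constant $M\equiv I$, so that $\partial_{0,\nu}M(\partial_{0,\nu}^{-1})=\partial_{0,\nu}$, and the relation $A$ as defined above. For $z\in B_{\mathbb{C}}(\tfrac{1}{2\nu},\tfrac{1}{2\nu})$ one has $\Re z^{-1}\geq\nu$, so (H1) holds with $c=\nu$, and $M$ is trivially analytic, which is what is needed for the causality argument. By Theorem~4.1 it remains only to verify (H2) and (H3) for the linear operator $h=\partial_{0,\nu}\Bullet\dive\pi_{\mathcal{BD}(\dive)}a(\partial_{0,\nu}^{-1})\pi_{\mathcal{BD}(\grad)}^{\ast}$.

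Autonomy is immediate, because $\partial_{0,\nu}$ and $a(\partial_{0,\nu}^{-1})$ are functions of $\partial_{0,\nu}$ and thus commute with every temporal translation $\tau_{s}$, while the remaining factors act only spatially. For maximal monotonicity I would exploit that $h$ is linear, densely defined and closed: for such an operator, maximal monotonicity is equivalent to both $h$ and $h^{\ast}$ being monotone. The computation displayed just before the theorem shows, via condition \eqref{eq:cond_Picard}, that
\[
\Re\intop_{-\infty}^{0}\langle hu|u\rangle_{\mathcal{BD}(\grad)}(t)\,e^{-2\nu t}\,\mathrm{d}t\geq 0
\]
for $u$ in the core $\mathcal{D}(\partial_{0,\nu})\cap H_{\nu,0}(\mathbb{R};\mathcal{BD}(\grad))$ provided by Lemma~3.2; evaluating at $t=0$ by translation and letting the cutoff tend to infinity (or alternatively, using that $\langle hu|u\rangle_{\mathcal{BD}(\grad)}(t)e^{-2\nu t}$ is integrable in $t$) gives plain monotonicity of $h$. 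The same core argument applied to the adjoint, whose expression is identified by Lemma~3.2 as $h^{\ast}=\partial_{0,\nu}^{\ast}\pi_{\mathcal{BD}(\grad)}a(\partial_{0,\nu}^{-1})^{\ast}\pi_{\mathcal{BD}(\dive)}^{\ast}\Bullet\grad$, delivers monotonicity of $h^{\ast}$ from the identity $\Re\langle h^{\ast}u|u\rangle=\Re\langle u|hu\rangle$ valid on the common core.

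For (H3), by linearity of $h$ it is enough to check that $\int_{-\infty}^{0}\Re\langle w|hw\rangle_{\mathcal{BD}(\grad)}e^{-2\nu t}\,\mathrm{d}t\geq 0$ for every $w\in\mathcal{D}(h)$; the displayed calculation already gives this on the core, and Cauchy-Schwarz together with the graph-norm convergence on that core extend the inequality to all of $\mathcal{D}(h)$. Combining, Theorem~4.1 transfers (H2) and (H3) from $h$ to $A$. Theorem~3.5 then yields well-posedness of \eqref{eq:wave_evo}, and Proposition~3.9 (applicable since $M\equiv I$ is analytic) yields causality of the solution operator.

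The main obstacle is bookkeeping rather than a genuinely hard step: one has to keep careful track of the various projections $\pi_{\mathcal{BD}(\grad)}$, $\pi_{\mathcal{BD}(\dive)}$, $P_{\mathcal{D}(\dive_{c})}$, together with the cancellations produced by the vanishing of $\dive_{c}$ and $\grad_{c}$ on their respective domains, in order to reduce the integral expression for $\langle hu|u\rangle_{\mathcal{BD}(\grad)}$ to the shape of \eqref{eq:cond_Picard}. Once that reduction is carried out on the core, the abstract machinery of Sections~3 and~4.1 closes the argument without further effort.
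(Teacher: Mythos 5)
Your proposal is correct and follows essentially the same route as the paper: take $M\equiv I$ (so (H1) holds with $c=\nu$), verify (H2) and (H3) for $h$ by using \prettyref{eq:cond_Picard} on the core $\mathcal{D}(\partial_{0,\nu})\cap H_{\nu,0}(\mathbb{R};\mathcal{BD}(\grad))$ supplied by \prettyref{lem:core}, deduce maximal monotonicity of the closed linear operator $h$ from the monotonicity of $h$ and $h^{\ast}$, and then invoke \prettyref{thm:A_hypotheses}, \prettyref{thm:well_posedness} and \prettyref{prop:causality}. The only differences are cosmetic (your numbering of the well-posedness theorem and causality proposition is off, and you spell out the routine core-to-domain extension for (H3) that the paper leaves implicit).
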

\begin{proof}
Since $h$ and $h^{\ast}$ are monotone, we get the maximal monotonicity
of $h,$ since $h$ is closed. Thus, $h$ satisfies the hypotheses
(H2) and (H3). \prettyref{thm:A_hypotheses} and \prettyref{thm:well_posedness}
now give the well-posedness of \prettyref{eq:wave_evo} and \prettyref{prop:causality}
shows the causality of the solution operator.
\end{proof}

\subsubsection*{Visco-elasticity with frictional boundary conditions}

This subsection is devoted to the study of the following system:
\begin{align}
\rho\partial_{0,\nu}^{2}u-\Div T & =f,\label{eq:visco_elastic}\\
T & =C\Grad u+D\Grad\partial_{0,\nu}u\label{eq:Kelvin-Voigt}
\end{align}
on a domain $\Omega\subseteq\mathbb{R}^{n}$ completed by a nonlinear
boundary condition of the form 
\begin{equation}
(\partial_{0,\nu}u,-T\cdot N)\in g,\label{eq:frictional boundary}
\end{equation}
where $g\subseteq H_{\nu,0}(\mathbb{R};L_{2}(\partial\Omega))\oplus H_{\nu,0}(\mathbb{R};L_{2}(\partial\Omega))$
and $N$ denotes the outward unit normal vector-field on $\partial\Omega.$
Here $u\in H_{\nu,0}(\mathbb{R};L_{2}(\Omega)^{n}),$ denoting the
displacement-field of the body $\Omega$, and $T\in H_{\nu,0}(\mathbb{R};H_{\mathrm{sym}}(\Omega)),$
standing for the stress tensor, are the unknowns. Equation \prettyref{eq:visco_elastic}
is the well-known equation of elasticity, describing the deformation
of an elastic body $\Omega$ with a density distribution $\rho\in L_{\infty}(\Omega)$,
which is assumed to be real-valued and strictly positive. The operator
$\Div$ denotes the divergence for symmetric-matrix-valued functions
and the function $f\in H_{\nu,0}(\mathbb{R};L_{2}(\Omega)^{n})$ is
an external force. The constitutive relation \prettyref{eq:Kelvin-Voigt},
linking the stress $T$ and the strain $\Grad u$, where by $\Grad$
we denote the symmetrized Jacobian of a vector-valued function, is
known as the Kelvin-Voigt model in visco-elasticity. Here $C$ and
$D$ denote the elasticity and the viscosity tensor, respectively.
The boundary condition \prettyref{eq:frictional boundary} is motivated
by so-called frictional boundary conditions, where $g$ is usually
a subgradient of a convex, lower semicontinuous function (due to Rockafellar
and Tyrell \cite{Rockafellar_1970_book}) or a generalized gradient
of a locally Lipschitz-continuous function (due to Clarke \cite{clarke1976generalized}).
For typical examples of frictional boundary conditions in elasticity
we refer to \cite[p. 171 ff.]{Sofonea2009} and to \cite[Section 5]{Migorski_2009}.\\
A similar problem was treated in \cite{Migorski_2009} for antiplane
shear deformations, where the domain $\Omega$ was assumed to be a
cylinder in $\mathbb{R}^{3}$. In this case the displacement can be
expressed by a scalar-valued function and hence, \prettyref{eq:visco_elastic}
reduces to an equation similar to \prettyref{eq:wave}. The boundary
condition \prettyref{eq:frictional boundary} was assumed to hold
on a part of the boundary, while on the other parts of $\partial\Omega$
Dirichlet- and Neumann-boundary conditions were prescribed. The relation
$g$ was assumed to be a generalized gradient of a locally Lipschitz-continuous
function and the existence of a solution was proved, using a variational
approach. The uniqueness of a solution was shown under the additional
constraint that $g$ is quasi-monotone (i.e. there exists some constant
$c>0$ such that $g+c$ is monotone). Here, for simplicity we assume
that \prettyref{eq:frictional boundary} holds on the whole boundary
and that $g$ satisfies a suitable monotonicity constraint. However,
since our relation is allowed to act in time and space, it covers
a broader class of possible boundary conditions.\\
We begin by introducing the differential operators $\Grad$ and $\Div$.
\begin{defn*}
Let $\Omega\subseteq\mathbb{R}^{n}$ be open and denote by $L_{2}(\Omega)^{n\times n}$
the space of $n\times n$-matrix-valued $L_{2}(\Omega)$-functions
equipped with the Frobenius inner product given by 
\[
\langle\Phi|\Psi\rangle\coloneqq\intop_{\Omega}\trace(\Phi(t)^{\ast}\Psi(t))\mbox{ d}t\quad\left(\Phi,\Psi\in L_{2}(\Omega)^{n\times n}\right).
\]
Moreover, we set $H_{\mathrm{sym}}(\Omega)\coloneqq\left\{ \left.\Phi\in L_{2}(\Omega)^{n\times n}\,\right|\,\Phi(x)^{T}=\Phi(x)\quad(x\in\mathbb{R}\mbox{ a.e.})\right\} ,$
which is a closed subspace of $L_{2}(\Omega)^{n\times n}$ and hence,
a Hilbert space. We define the operator $\grad_{c}$ as the closure
of 
\begin{align*}
\grad|_{C_{c}^{\infty}(\Omega)^{n}}:C_{c}^{\infty}(\Omega)^{n}\subseteq L_{2}(\Omega)^{n} & \to L_{2}(\Omega)^{n\times n}\\
\left(\phi_{i}\right)_{i\in\{1,\ldots,n\}} & \mapsto\left(\partial_{j}\phi_{i}\right)_{i,j\in\{1,\ldots,n\}}
\end{align*}
and $\dive_{c}$ as the closure of 
\begin{align*}
\dive|_{C_{c}^{\infty}(\Omega)^{n\times n}}:C_{c}^{\infty}(\Omega)^{n\times n}\subseteq L_{2}(\Omega)^{n\times n} & \to L_{2}(\Omega)^{n}\\
\left(\Psi_{ij}\right)_{i,j\in\{1,\ldots,n\}} & \mapsto\left(\sum_{j=1}^{n}\partial_{j}\Psi_{ij}\right)_{i\in\{1,\ldots,n\}}.
\end{align*}
Furthermore, we introduce the operators $\Grad_{c}$ and $\Div_{c}$
as the closures of 
\begin{align*}
\Grad|_{C_{c}^{\infty}(\Omega)^{n}}:C_{c}^{\infty}(\Omega)^{n}\subseteq L_{2}(\Omega)^{n} & \to H_{\mathrm{sym}}(\Omega)\\
\left(\phi_{i}\right)_{i\in\{1,\ldots,n\}} & \mapsto\left(\frac{1}{2}\left(\partial_{i}\phi_{j}+\partial_{j}\phi_{i}\right)\right)_{i,j\in\{1,\ldots,n\}}
\end{align*}
and of 
\begin{align*}
\Div|_{C_{c}^{\infty}(\Omega)^{n\times n}\cap H_{\mathrm{sym}}(\Omega)}:C_{c}^{\infty}(\Omega)^{n\times n}\cap H_{\mathrm{sym}}(\Omega)\subseteq H_{\mathrm{sym}}(\Omega) & \to L_{2}(\Omega)^{n}\\
\left(\Psi_{ij}\right)_{i,j\in\{1,\ldots,n\}} & \mapsto\left(\sum_{j=1}^{n}\partial_{j}\Psi_{ij}\right)_{i\in\{1,\ldots,n\}},
\end{align*}
respectively%
\footnote{By definition, elements of $\mathcal{D}(\Grad_{c})$ and of $\mathcal{D}(\Div_{c})$
satisfy a generalized boundary condition of the form $\phi=0$ on
$\partial\Omega$ and $\Psi\cdot N=0$ on $\partial\Omega$ for $\phi\in\mathcal{D}(\Grad_{c})$
and $\Psi\in\mathcal{D}(\Div_{c})$, respectively, where $N$ denotes
the outward normal vector-field on $\partial\Omega.$ %
}.\end{defn*}
\begin{rem}
An easy computation shows that $\grad_{c}$ and $\dive_{c}$ as well
as $\Grad_{c}$ and $\Div_{c}$ are formally skew-adjoint and we define
$\grad\coloneqq\left(-\dive_{c}\right)^{\ast}\supseteq\grad_{c}$,
$\dive\coloneqq\left(-\grad_{c}\right)^{\ast}\supseteq\dive_{c}$
as well as $\Grad\coloneqq-\left(\Div_{c}\right)^{\ast}\supseteq\Grad_{c}$
and $\Div\coloneqq-\left(\Grad_{c}\right)^{\ast}\supseteq\Div_{c}.$
\end{rem}
Throughout we may assume that $\mathcal{D}(\Grad)\stackrel{\iota}{\hookrightarrow}\mathcal{D}(\grad),$
i.e. Korn's inequality holds (for sufficient conditions for Korn's
inequality we refer to \cite{Ciarlet2005} and the references therein).
We rewrite the equations \prettyref{eq:visco_elastic} and \prettyref{eq:Kelvin-Voigt}
into a system of the form \prettyref{eq:evol}. We assume that $D\in L(H_{\mathrm{sym}}(\Omega))$
is a selfadjoint, strictly positive definite operator and hence, we
may rewrite \prettyref{eq:Kelvin-Voigt} as follows 
\[
T=\left(C+D\partial_{0,\nu}\right)\Grad u=\partial_{0,\nu}D(\partial_{0,\nu}^{-1}D^{-1}C+1)\Grad u,
\]
which gives 
\[
\partial_{0,\nu}^{-1}D^{-1}T=(\partial_{0,\nu}^{-1}D^{-1}C+1)\Grad u.
\]
If we choose $\nu>0$ large enough, such that $\|D^{-1}C\|<\nu$,
the latter equation can be written as 
\[
\partial_{0,\nu}^{-1}\left(\partial_{0,\nu}^{-1}D^{-1}C+1\right)^{-1}D^{-1}T=\Grad u.
\]
Thus, the equations \prettyref{eq:visco_elastic} and \prettyref{eq:Kelvin-Voigt}
can be replaced by the system 
\[
\left(\partial_{0,\nu}\left(\begin{array}{cc}
\rho & 0\\
0 & \partial_{0,\nu}^{-1}\left(\partial_{0,\nu}^{-1}D^{-1}C+1\right)^{-1}D^{-1}
\end{array}\right)+\left(\begin{array}{cc}
0 & \Div\\
\Grad & 0
\end{array}\right)\right)\left(\begin{array}{c}
v\\
-T
\end{array}\right)=\left(\begin{array}{c}
f\\
0
\end{array}\right),
\]
where $v\coloneqq\partial_{0,\nu}u.$ Hence, the material law $M(\partial_{0,\nu}^{-1})$
is given by 
\begin{equation}
M(\partial_{0,\nu}^{-1})=\left(\begin{array}{cc}
\rho & 0\\
0 & 0
\end{array}\right)+\partial_{0,\nu}^{-1}\left(\begin{array}{cc}
0 & 0\\
0 & D^{-1}
\end{array}\right)-\partial_{0,\nu}^{-2}\left(\begin{array}{cc}
0 & 0\\
0 & \left(\partial_{0,\nu}^{-1}C+D\right)^{-1}CD^{-1}
\end{array}\right).\label{eq:matrial_law_elasticity}
\end{equation}
According to the assumptions on $\rho$ and $D,$ this material law
satisfies the hypothesis (H1) if we choose $\nu>0$ large enough (this
corresponds to the so-called $0$-analytic case in \cite{Picard}).\\
We now have to formulate the boundary condition \prettyref{eq:frictional boundary}
in our framework. Since we do not want to impose regularity assumptions
on the boundary $\partial\Omega,$ we have to detour the space $L_{2}(\partial\Omega).$
Following \cite[p. 16 ff.]{Picard2012_boundary_control} we define
the following substitute for $L_{2}(\partial\Omega).$
\begin{defn*}
We assume that $N\in L_{\infty}(\Omega)^{n}$ with $\dive N\in L_{\infty}(\Omega).$%
\footnote{This could be interpreted as a ``smooth'' continuation of the outward
normal vector field $N$ to the whole domain $\Omega$. %
} We define the operator $\nu:\mathcal{BD}(\grad)\to\mathcal{BD}(\dive)$
by $\nu f\coloneqq\pi_{\mathcal{BD}(\dive)}\left(f_{j}N_{k}\right)_{j,k\in\{1,\ldots,n\}}$
and assume that 
\[
\left\langle \left.\left(\Bullet\dive\nu+\nu^{\ast}\Bullet\grad\right)f\right|f\right\rangle _{\mathcal{BD}(\grad)}>0\quad(f\in\mathcal{BD}(\grad)\setminus\{0\}).
\]
This gives that $(f,g)\mapsto\frac{1}{2}\left(\left\langle \nu f\left|\Bullet\grad g\right.\right\rangle _{\mathcal{BD}(\dive)}+\left\langle \left.\Bullet\grad f\right|\nu g\right\rangle _{\mathcal{BD}(\dive)}\right)$
defines an inner product on $\mathcal{BD}(\grad)$ and we denote the
completion of $\mathcal{BD}(\grad)$ with respect to this inner product
by $U.$ Moreover, we denote the embedding of $\mathcal{BD}(\grad)$
into $U$ by $\kappa$. \end{defn*}
\begin{rem}
\label{rem: U}A formal calculation (see \cite[Remark 5.2]{Picard2012_boundary_control})
yields that in the case of a smooth boundary we have that 
\[
\langle f|g\rangle_{U}=\intop_{\partial\Omega}f(x)^{\ast}g(x)\mbox{ d}S(x).
\]
We define the mapping $j\coloneqq\kappa\pi_{\mathcal{BD}(\grad)}\iota\pi_{\mathcal{BD}(\Grad)}^{\ast}:\mathcal{BD}(\Grad)\to U.$
Then, as it was pointed out in \cite[Remark 5.4]{Picard2012_boundary_control},
the term $\Bullet\Grad j^{\ast}u$ can be identified with $u$ on
$\partial\Omega.$ This construction allows us to compare Dirichlet-
and Neumann-type boundary values.
\end{rem}
Using this framework, we assume that $g$ is a binary relation on
$H_{\nu,0}(\mathbb{R};U).$ Then the boundary condition \prettyref{eq:frictional boundary}
can be formulated as 
\[
\left(\pi_{\mathcal{BD}(\Grad)}v,\pi_{\mathcal{BD}(\Div)}(-T)\right)\in\Bullet\Grad j^{\ast}gj,
\]
or equivalently as 
\[
\left(\pi_{\mathcal{BD}(\Grad)}v,\Bullet\Div\pi_{\mathcal{BD}(\Div)}(-T)\right)\in j^{\ast}gj.
\]
We define the operator $A$ by 
\begin{align}
\mathcal{D}(A) & \coloneqq\left\{ (v,\sigma)\in H_{\nu,0}(\mathbb{R};\mathcal{D}(\Grad)\oplus\mathcal{D}(\Div))\,\left|\,\left(\pi_{\mathcal{BD}(\Grad)}v,\Bullet\Div\pi_{\mathcal{BD}(\Div)}\sigma\right)\in j^{\ast}gj\right.\right\} \nonumber \\
A\left(\begin{array}{c}
v\\
\sigma
\end{array}\right) & \coloneqq\left(\begin{array}{cc}
0 & \Div\\
\Grad & 0
\end{array}\right)\left(\begin{array}{c}
v\\
\sigma
\end{array}\right),\label{eq:visco-elastic_A}
\end{align}
and consider the equation 
\begin{equation}
\left(\partial_{0,\nu}\left(\begin{array}{cc}
\rho & 0\\
0 & \partial_{0,\nu}^{-1}\left(\partial_{0,\nu}^{-1}D^{-1}C+1\right)^{-1}D^{-1}
\end{array}\right)+A\right)\left(\begin{array}{c}
v\\
\sigma
\end{array}\right)=\left(\begin{array}{c}
f\\
0
\end{array}\right),\label{eq:visco-elastic_evo}
\end{equation}
which is, according to our findings, an appropriate reformulation
of the problem given by \prettyref{eq:visco_elastic}, \prettyref{eq:Kelvin-Voigt}
and \prettyref{eq:frictional boundary}. Hence, we are in the situation
of Subsection 4.1 and if we assume that $j^{\ast}gj\subseteq H_{\nu,0}(\mathbb{R};\mathcal{BD}(\Grad))\oplus H_{\nu,0}(\mathbb{R};\mathcal{BD}(\Grad))$
satisfies the hypotheses (H2) and (H3), \prettyref{thm:A_hypotheses}
applies, which gives the well-posedness and the causality of the problem.
We summarize our findings in the following theorem.
\begin{thm}
Let $\nu>\|D^{-1}C\|$ be such that $M(\partial_{0,\nu}^{-1})$, given
by \prettyref{eq:matrial_law_elasticity}, satisfies (H1) and let
$g\subseteq H_{\nu,0}(\mathbb{R};U)\oplus H_{\nu,0}(\mathbb{R};U)$
be such that $j^{\ast}gj\subseteq H_{\nu,0}(\mathbb{R};\mathcal{BD}(\Grad))\oplus H_{\nu,0}(\mathbb{R};\mathcal{BD}(\Grad))$
satisfies the hypotheses (H2) and (H3). Then the problem given by
\prettyref{eq:visco-elastic_evo}, where $A$ is defined by \prettyref{eq:visco-elastic_A}
is well-posed and the corresponding solution operator is causal. 
\end{thm}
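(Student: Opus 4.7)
The plan is to reduce the claim to a direct application of \prettyref{thm:well_posedness} and \prettyref{prop:causality}, using \prettyref{thm:A_hypotheses} as the bridge that transfers the hypotheses from the boundary relation to the operator $A$.

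First I would identify the operator $A$ from \prettyref{eq:visco-elastic_A} with the general operator \prettyref{eq:A} of Subsection 4.1, under the choices $G_c := \Grad_c$, $D_c := \Div_c$, $H_0 := L_2(\Omega)^n$, $H_1 := H_{\mathrm{sym}}(\Omega)$, and $h := j^{\ast}gj$. The formal skew-adjointness of $\Grad_c$ and $\Div_c$ was noted in the preceding remark, so the abstract framework applies verbatim. Since $h = j^{\ast}gj$ satisfies (H2) and (H3) by assumption, \prettyref{thm:A_hypotheses} immediately yields that $A$ inherits both hypotheses.

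Next I would verify that $M(\partial_{0,\nu}^{-1})$ in \prettyref{eq:matrial_law_elasticity} is a linear material law in the sense of Subsection 2.1, i.e., that its symbol $M$ is analytic on $B_{\mathbb{C}}\!\left(\tfrac{1}{2\nu},\tfrac{1}{2\nu}\right)$; this analyticity is required for the causality statement in \prettyref{prop:causality}. The only nontrivial factor is $z \mapsto (zC+D)^{-1} = (1+zD^{-1}C)^{-1}D^{-1}$, whose Neumann expansion converges in operator norm on the ball because $|z| < 1/\nu$ there and $\|D^{-1}C\| < \nu$ by hypothesis. Together with the assumed hypothesis (H1) for $M(\partial_{0,\nu}^{-1})$, this completes the verification of all structural prerequisites of the two master theorems.

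Finally, \prettyref{thm:well_posedness} supplies a Lipschitz-continuous solution operator defined on all of $H_{\nu,0}(\mathbb{R}; L_2(\Omega)^n \oplus H_{\mathrm{sym}}(\Omega))$, from which the unique existence and continuous dependence of a solution $(v,\sigma)$ to \prettyref{eq:visco-elastic_evo} with right-hand side $(f,0)$ follow at once, while \prettyref{prop:causality} yields the causality of that solution operator. There is no substantive obstacle in the argument: the heavy lifting has already been done in Sections 3 and 4.1, and the only quantitative input left to check here is the bound $\nu > \|D^{-1}C\|$, which is used both to make sense of the material law and to secure its analyticity.
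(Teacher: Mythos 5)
Your proposal is correct and follows essentially the same route as the paper, which (in the discussion immediately preceding the theorem) reduces the claim to Theorem \ref{thm:A_hypotheses} applied with $G_c=\Grad_c$, $D_c=\Div_c$ and $h=j^{\ast}gj$, followed by Theorem \ref{thm:well_posedness} and Proposition \ref{prop:causality}. Your explicit verification that $z\mapsto(1+zD^{-1}C)^{-1}D^{-1}$ is analytic on $B_{\mathbb{C}}\left(\frac{1}{2\nu},\frac{1}{2\nu}\right)$ via the Neumann series under $\nu>\|D^{-1}C\|$ is a welcome detail that the paper only gestures at when invoking the $0$-analytic case.
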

In the last part of this subsection we will present a condition on
$g$, which implies the maximal monotonicity of $j^{\ast}gj.$\\
In \cite{Migorski_2009} the relation $g$ is defined as the canonical
extension (see \prettyref{rem:autonomous relations} (a)) of a quasi-monotone
relation $\tilde{g}$ on $L_{2}(\partial\Omega)$, which is given
by 
\[
\tilde{g}\coloneqq\{(u,v)\in L_{2}(\partial\Omega)\oplus L_{2}(\partial\Omega)\,|\,\left(u(x),v(x)\right)\in\partial j(x,\cdot)\quad(x\in\partial\Omega\mbox{ a.e.})\},
\]
where $j(\cdot,\cdot):\partial\Omega\times\mathbb{R}\to\mathbb{R}$
is a suitable function, which is locally Lipschitz-continuous with
respect to the second variable. Moreover it is assumed that $[\mathbb{R}]\partial j(x,\cdot)=\mathbb{R}$
for almost every $x\in\partial\Omega$ and that there exists $c>0$
such that for every $\xi\in\mathbb{R}$ and every $\eta\in\mathbb{R}$
satisfying $(\xi,\eta)\in\partial j(x,\cdot)$ the estimate $|\eta|\leq c(1+|\xi|)$
holds. This gives that $[L_{2}(\partial\Omega)]\tilde{g}=L_{2}(\partial\Omega)$
and that $\tilde{g}$ is a \emph{bounded} relation, i.e. $\tilde{g}[M]$
is bounded for bounded $M$.\\
A suitable realization of these assumptions in our framework is given
as follows. We assume that $\tilde{g}\subseteq U\oplus U$ is a maximal
monotone bounded relation, satisfying $[U]\tilde{g}=U$ and for technical
reasons $(0,0)\in\tilde{g}.$ In the next lemma we show that these
conditions imply that $j^{\ast}\tilde{g}j$ is maximal monotone, too.
This would give that $j^{\ast}gj$ satisfies the hypotheses (H2) and
(H3), where $g\subseteq H_{\nu,0}(\mathbb{R};U)\oplus H_{\nu,0}(\mathbb{R};U)$
is the canonical extension of $\tilde{g}$. 
\begin{lem}
Let $H_{1},H_{2}$ be two Hilbert spaces, $T\subseteq H_{1}\oplus H_{1}$
maximal monotone and bounded with $(0,0)\in T$ and $[H_{1}]T=H_{1}.$
Moreover, let $S\in L(H_{1},H_{2}).$ Then $STS^{\ast}\subseteq H_{2}\oplus H_{2}$
is maximal monotone.\end{lem}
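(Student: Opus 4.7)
The plan is to verify Minty's criterion (\prettyref{thm:Minty}) for $STS^{\ast}$. Monotonicity is immediate from that of $T$: for $(u_{i},Sw_{i})\in STS^{\ast}$ with $w_{i}\in T[\{S^{\ast}u_{i}\}]$ ($i=1,2$),
\[
\Re\langle u_{1}-u_{2}|Sw_{1}-Sw_{2}\rangle=\Re\langle S^{\ast}u_{1}-S^{\ast}u_{2}|w_{1}-w_{2}\rangle\geq 0.
\]
It therefore suffices to show $(1+STS^{\ast})[H_{2}]=H_{2}$, so fix $f\in H_{2}$.

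My first step is to construct approximants via the Yosida approximation. Replacing $T$ by $T_{\lambda}$ yields a monotone Lipschitz mapping $ST_{\lambda}S^{\ast}\colon H_{2}\to H_{2}$, so \prettyref{lem:Yosida} (applied with $A=0$) makes $ST_{\lambda}S^{\ast}$ maximal monotone and, via \prettyref{thm:Minty}, produces a unique $u_{\lambda}\in H_{2}$ with $u_{\lambda}+ST_{\lambda}(S^{\ast}u_{\lambda})=f$. Setting $w_{\lambda}\coloneqq T_{\lambda}(S^{\ast}u_{\lambda})$, I would derive uniform bounds as follows: $(0,0)\in T$ forces $T_{\lambda}(0)=0$, so testing the defining equation against $u_{\lambda}$ and using monotonicity of $ST_{\lambda}S^{\ast}$ gives $|u_{\lambda}|\leq|f|$; since $\{S^{\ast}u_{\lambda}\}_{\lambda>0}$ is then bounded in $H_{1}$ and $T$ is bounded in the set-valued sense, \prettyref{rem:maximal monotone}(b) yields $|w_{\lambda}|\leq|T^{0}(S^{\ast}u_{\lambda})|\leq\const$.

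Passing to a subsequence I obtain $u_{\lambda}\rightharpoonup u$ in $H_{2}$ and $w_{\lambda}\rightharpoonup w$ in $H_{1}$; weak continuity of $S$ and $S^{\ast}$, combined with the approximate equation, then gives $u+Sw=f$, and moreover $J_{\lambda}(T)(S^{\ast}u_{\lambda})=S^{\ast}u_{\lambda}-\lambda w_{\lambda}\rightharpoonup S^{\ast}u$ because $\lambda w_{\lambda}\to 0$ in norm.

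The main obstacle is to conclude $(S^{\ast}u,w)\in T$ from these two weak convergences together with $(J_{\lambda}(T)(S^{\ast}u_{\lambda}),w_{\lambda})\in T$. For this I would invoke the standard limsup-closedness criterion for maximal monotone graphs: whenever $x_{n}\rightharpoonup x$, $y_{n}\rightharpoonup y$, $(x_{n},y_{n})\in T$, and $\limsup_{n}\Re\langle x_{n}|y_{n}\rangle\leq\Re\langle x|y\rangle$, one has $(x,y)\in T$ (the proof pairs against an arbitrary $(a,b)\in T$ and invokes maximality of $T$). Using $Sw_{\lambda}=f-u_{\lambda}$ and $Sw=f-u$ one rewrites
\[
\Re\langle S^{\ast}u_{\lambda}-\lambda w_{\lambda}|w_{\lambda}\rangle=\Re\langle u_{\lambda}|f\rangle-|u_{\lambda}|^{2}-\lambda|w_{\lambda}|^{2},\qquad\Re\langle S^{\ast}u|w\rangle=\Re\langle u|f\rangle-|u|^{2},
\]
and the required estimate then drops out from $\lambda|w_{\lambda}|^{2}\to 0$, $\Re\langle u_{\lambda}|f\rangle\to\Re\langle u|f\rangle$, and the weak lower semicontinuity $\liminf|u_{\lambda}|^{2}\geq|u|^{2}$. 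Hence $(u,Sw)\in STS^{\ast}$ and $f=u+Sw\in(1+STS^{\ast})[H_{2}]$, closing the appeal to \prettyref{thm:Minty}.
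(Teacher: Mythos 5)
Your argument is correct, and its skeleton (Yosida regularization of $T$, solvability of $u_{\lambda}+ST_{\lambda}(S^{\ast}u_{\lambda})=f$, the bounds $|u_{\lambda}|\leq|f|$ from $T_{\lambda}(0)=0$ and $\sup_{\lambda}|T_{\lambda}(S^{\ast}u_{\lambda})|<\infty$ from the boundedness of $T$) coincides with the paper's. The two proofs part ways only in the passage to the limit $\lambda\to0$. The paper proves that $(x_{\lambda})_{\lambda>0}$ is a \emph{strongly} convergent (Cauchy) family, by pairing the defining equations for two parameters $\lambda,\mu$ and splitting $S^{\ast}x_{\lambda}=\lambda T_{\lambda}(S^{\ast}x_{\lambda})+J_{\lambda}(T)(S^{\ast}x_{\lambda})$, which yields $|x_{\lambda}-x_{\mu}|^{2}\leq(\lambda+\mu)C$; it then only needs the demi-closedness of $T$ exactly as stated in Subsection 2.2 (strong convergence in one component, weak in the other). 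You instead extract weakly convergent subsequences of both $u_{\lambda}$ and $w_{\lambda}=T_{\lambda}(S^{\ast}u_{\lambda})$ and close the graph with the $\limsup$-criterion for maximal monotone relations (weak--weak convergence plus $\limsup\Re\langle x_{n}|y_{n}\rangle\leq\Re\langle x|y\rangle$ implies membership in $T$), verifying the inner-product inequality through the identity $Sw_{\lambda}=f-u_{\lambda}$ and weak lower semicontinuity of the norm. Both routes are sound; yours avoids the Cauchy estimate at the price of invoking a closedness criterion that is standard but not among the facts recorded in the paper's preliminaries (you correctly sketch its proof), while the paper's version additionally delivers norm convergence of the approximants and stays entirely within the toolbox it has already set up.
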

\begin{proof}
The monotonicity of $STS^{\ast}$ is clear. We will show that $\left(1+STS^{\ast}\right)[H_{2}]=H_{2},$
which implies the maximal monotonicity according to \prettyref{thm:Minty}.
Let $f\in H_{2}.$ We replace the relation $T$ by its Yosida-approximation
$T_{\lambda},$ which is monotone and Lipschitz-continuous with a
Lipschitz-constant less than or equal to $\lambda^{-1}.$ Hence, $ST_{\lambda}S^{\ast}$
is monotone and Lipschitz-continuous with a Lipschitz-constant less
than or equal to $\frac{\|S\|^{2}}{\lambda}.$ For $\mu<\frac{\lambda}{\|S\|^{2}}$
the contraction mapping theorem yields the existence of an element
$x\in H_{2}$ satisfying $x+\mu ST_{\lambda}(S^{\ast}x)=y.$ Hence,
the mapping $ST_{\lambda}S^{\ast}$ is maximal monotone and thus,
for each $\lambda>0$ there exists $x_{\lambda}\in H_{2}$ with $x_{\lambda}+ST_{\lambda}(S^{\ast}x_{\lambda})=y.$
The latter can be written as $x_{\lambda}=J_{1}(ST_{\lambda}S^{\ast})(y),$
which implies that $\left|x_{\lambda}\right|\leq|y|$ for each $\lambda>0,$
according to \prettyref{rem:maximal monotone} (a). Since $T$ is
bounded, this gives that $\{T_{\lambda}(S^{\ast}x_{\lambda})\,|\,\lambda>0\}$
is bounded (see \prettyref{rem:maximal monotone} (b)). We will prove
that $(x_{\lambda})_{\lambda>0}$ converges as $\lambda$ tends to
$0.$ For $\mu,\lambda>0$ we estimate 
\begin{align*}
|x_{\lambda}-x_{\mu}|^{2} & =\Re\langle x_{\lambda}-x_{\mu}|ST_{\mu}(S^{\ast}x_{\mu})-ST_{\lambda}(S^{\ast}x_{\lambda})\rangle\\
 & =\Re\langle S^{\ast}x_{\lambda}-S^{\ast}x_{\mu}|T_{\mu}(S^{\ast}x_{\mu})-T_{\lambda}(S^{\ast}x_{\lambda})\rangle\\
 & =\Re\langle\lambda T_{\lambda}(S^{\ast}x_{\lambda})-\mu T_{\mu}(S^{\ast}x_{\mu})|T_{\mu}(S^{\ast}x_{\mu})-T_{\lambda}(S^{\ast}x_{\lambda})\rangle\\
 & \quad+\Re\langle J_{\lambda}(T)(S^{\ast}x_{\lambda})-J_{\mu}(T)(S^{\ast}x_{\mu})|T_{\mu}(S^{\ast}x_{\mu})-T_{\lambda}(S^{\ast}x_{\lambda})\rangle\\
 & \leq\Re\langle\lambda T_{\lambda}(S^{\ast}x_{\lambda})-\mu T_{\mu}(S^{\ast}x_{\mu})|T_{\mu}(S^{\ast}x_{\mu})-T_{\lambda}(S^{\ast}x_{\lambda})\rangle\\
 & \leq(\lambda+\mu)C,
\end{align*}
for a suitable constant $C>0$. This gives that $\left(x_{\lambda}\right)_{\lambda>0}$
converges to some $x\in H_{1}$ as $\lambda\to0.$ Moreover, $|J_{\lambda}(T)(S^{\ast}x_{\lambda})-S^{\ast}x|\leq|J_{\lambda}(T)\left(S^{\ast}x_{\lambda}\right)-S^{\ast}x_{\lambda}|+|S^{\ast}x_{\lambda}-S^{\ast}x|=\lambda|T_{\lambda}(S^{\ast}x_{\lambda})|+|S^{\ast}x_{\lambda}-S^{\ast}x|\to0$
as $\lambda\to0.$ Furthermore, since $\{T_{\lambda}(S^{\ast}x_{\lambda})\,|\,\lambda>0\}$
is bounded there exists a weakly convergent subsequence $(T_{\lambda_{n}}(S^{\ast}x_{\lambda_{n}}))_{n\in\mathbb{N}}$
with $\lambda_{n}\to0$ as $n\to\infty$ and we denote its weak limit
by $z\in H_{1}.$ Then, since $(J_{\lambda}(T)\left(S^{\ast}x_{\lambda}\right),T_{\lambda}(S^{\ast}x_{\lambda}))\in T$
for each $\lambda>0,$ the demi-closedness of $T$ implies $(S^{\ast}x,z)\in T.$
Since 
\[
ST_{\lambda}(S^{\ast}x_{\lambda})=y-x_{\lambda}\to y-x,
\]
we have that $Sz=y-x,$ which gives $(x,y-x)\in STS^{\ast}$ or equivalently
$(x,y)\in1+STS^{\ast}.$ 
\end{proof}

\section*{Acknowlodgement}

The author thanks Marcus Waurick for careful reading and fruitful
discussions.


\begin{thebibliography}{10}

\bibitem{Brezis}
H.~Brezis.
\newblock {\em Operateurs maximaux monotones et semi-groupes de contractions
  dans les espaces de Hilbert}.
\newblock Universite Paris VI et CNRS, 1971.

\bibitem{Ciarlet2005}
P.~G. Ciarlet and P.~{jun. Ciarlet}.
\newblock {Another approach to linearized elasticity and a new proof of Korn's
  inequality.}
\newblock {\em Math. Models Methods Appl. Sci.}, 15(2):259--271, 2005.

\bibitem{clarke1976generalized}
F.~H. Clarke and W.~U. M. M.~R. Center.
\newblock {\em {Generalized Gradients of Lipschitz Functionals}}.
\newblock Defense Technical Information Center, 1976.

\bibitem{do1985inequality}
P.~{Do Panagiotopoulos}.
\newblock {\em Inequality Problems in Mechanic and Applications: Convex and
  Nonconvex Energy Functions}.
\newblock Birkh{\"a}user, 1985.

\bibitem{papageogiou}
S.~Hu and N.~S. Papageorgiou.
\newblock {\em Handbook of Multivalued Analysis}, volume 1: Theory.
\newblock Springer, 1997.

\bibitem{hu2000handbook}
S.~Hu and N.~S. Papageorgiou.
\newblock {\em {Handbook of Multivalued Analysis}}, volume 2: Applications, of
  {\em Mathematics and its applications}.
\newblock Kluwer Academic Publishers, 2000.

\bibitem{Kalauch2011}
A.~{Kalauch}, R.~{Picard}, S.~{Siegmund}, S.~{Trostorff}, and M.~{Waurick}.
\newblock {A Hilbert Space Perspective on Ordinary Differential Equations with
  Memory Term}.
\newblock Technical report, TU Dresden, 2011.
\newblock arXiv:1204.2924.

\bibitem{lakshmikantham2010theory}
V.~Lakshmikantham, S.~Leela, and F.~A. Mcrae.
\newblock {\em {Theory of Causal Differential Equations}}.
\newblock Atlantis Studies in Mathematics for Engineering and Science. World
  Scientific Pub Co Inc, 2010.

\bibitem{Migorski_2009}
S.~Mig{\'o}rski, A.~Ochal, and M.~Sofonea.
\newblock Solvability of dynamic antiplane frictional contact problems for
  viscoelastic cylinders.
\newblock {\em Nonlinear Analysis: Theory, Methods \& Applications},
  70(10):3738--3748, 2009.

\bibitem{Migorski_2010_cylinder}
S.~Mig{\'o}rski, A.~Ochal, and M.~Sofonea.
\newblock Analysis of a dynamic contact problem for electro-viscoelastic
  cylinders.
\newblock {\em Nonlinear Analysis: Theory, Methods \& Applications},
  73(5):1221--1238, 2010.

\bibitem{Migorski_2010}
S.~Mig{\'o}rski, A.~Ochal, and M.~Sofonea.
\newblock A dynamic frictional contact problem for piezoelectric materials.
\newblock {\em Journal of Mathematical Analysis and Applications},
  361(1):161--176, 2010.

\bibitem{Minty}
G.~Minty.
\newblock Monotone (nonlinear) operators in a hilbert space.
\newblock {\em Duke Math. J.}, 29, 1962.

\bibitem{Morosanu}
G.~Morosanu.
\newblock {\em Nonlinear evolution equations and applications}.
\newblock Springer, 2nd edition, 1988.

\bibitem{naniewicz_1989}
Z.~Naniewicz.
\newblock On some nonmonotone subdifferential boundary conditions in
  elastostatics.
\newblock {\em Ingenieur-Archiv}, 60:31--40, 1989.

\bibitem{Picard}
R.~Picard.
\newblock {A structural observation for linear material laws in classical
  mathematical physics.}
\newblock {\em Math. Methods Appl. Sci.}, 32(14):1768--1803, 2009.

\bibitem{Picard2010}
R.~Picard.
\newblock {On a comprehensive class of linear material laws in classical
  mathematical physics.}
\newblock {\em Discrete Contin. Dyn. Syst., Ser. S}, 3(2):339--349, 2010.

\bibitem{Picard2012_Impedance}
R.~Picard.
\newblock A class of evolutionary problems with an application to acoustic
  waves with impedance type boundary conditions.
\newblock In {\em Spectral Theory, Mathematical System Theory, Evolution
  Equations, Differential and Difference Equations}, volume 221 of {\em
  Operator Theory: Advances and Applications}, pages 533--548. Springer Basel,
  2012.

\bibitem{Picard_McGhee}
R.~Picard and D.~McGhee.
\newblock {\em {Partial differential equations. A unified Hilbert space
  approach.}}
\newblock {de Gruyter Expositions in Mathematics 55. Berlin: de Gruyter.
  xviii}, 2011.

\bibitem{Picard2012_boundary_control}
R.~Picard, S.~Trostorff, and M.~Waurick.
\newblock {On a Class of Boundary Control Systems.}
\newblock Technical report, TU Dresden, 2012.
\newblock arXiv:1211.3634.

\bibitem{Picard2012_comprehensive_control}
R.~Picard, S.~Trostorff, and M.~Waurick.
\newblock {On a comprehensive Class of Linear Control Problems.}
\newblock Technical report, TU Dresden, 2012.
\newblock arXiv:1208.3140.

\bibitem{picard1989hilbert}
R.~H. Picard.
\newblock {\em {Hilbert space approach to some classical transforms}}.
\newblock Pitman research notes in mathematics series. Longman Scientific \&
  Technical, 1989.

\bibitem{Rockafellar_1970_book}
R.~Rockafellar.
\newblock {\em {Convex analysis.}}
\newblock {Princeton, N. J.: Princeton University Press, XVIII}, 1970.

\bibitem{rudin1987real}
W.~Rudin.
\newblock {\em {Real and complex analysis}}.
\newblock Mathematics series. McGraw-Hill, 1987.

\bibitem{showalter_book}
R.~E. Showalter.
\newblock {\em Monotone Operators in Banach Space and Nonlinear Partial
  Differential Equations}.
\newblock American Mathematical Society, 1997.

\bibitem{Sofonea2009}
M.~Sofonea and A.~Matei.
\newblock {\em {Variational inequalities with applications. A study of
  antiplane frictional contact problems.}}
\newblock {Advances in Mechanics and Mathematics 18. New York, NY: Springer.
  xix}, 2009.

\bibitem{Trostorff_2011}
S.~Trostorff.
\newblock {\em Well-posedness and causality for a class of evolutionary
  inclusions}.
\newblock PhD thesis, TU Dresden, 2011.
\newblock URL:
  \url{http://www.qucosa.de/fileadmin/data/qucosa/documents/7832/phd-thesis_trostorff.pdf}.

\bibitem{Trostorff2012_NA}
S.~Trostorff.
\newblock {An alternative approach to well-posedness of a class of differential
  inclusions in Hilbert spaces.}
\newblock {\em Nonlinear Anal., Theory Methods Appl., Ser. A, Theory Methods},
  75(15):5851--5865, 2012.

\bibitem{Trostorff2012_integro}
S.~Trostorff.
\newblock {Well-posedness of Linear Integro-Differential Equations with
  Operator-valued Kernels.}
\newblock Technical report, TU Dresden, 2012.
\newblock arXiv:1210.1728.

\end{thebibliography}
\end{document}